\documentclass{amsart}

\usepackage[T1]{fontenc}

\usepackage{amsmath,amssymb,amsthm,amscd,comment,cite,amsfonts,indentfirst,color,setspace,bbold}
\usepackage[mathscr]{euscript}
\usepackage{mathtext,marvosym,textcomp}
\usepackage[makeroom]{cancel}
\usepackage{array}
\usepackage{mathtools}
\usepackage{hyperref}
\hypersetup{colorlinks,allcolors=black}

\newcommand{\hm}{\mathscr{H}}
\newcommand\numberthis{\addtocounter{equation}{1}\tag{\theequation}}
\newcommand{\ts}{\tilde{\Sigma}}
\newcommand{\s}{\Sigma}
\newcommand{\gh}{\overline{\mathcal{H}(g)}}

\newtheorem{theorem}{Theorem}
\newtheorem{corr}{Corollary}
\newtheorem{proposition}{Proposition}
\newtheorem*{remark}{Remark}
\newtheorem{lemma}{Lemma}
\newtheorem{conjecture}{Conjecture}
\newtheorem{definition}{Definition}

\newtheoremstyle{named}{}{}{\itshape}{}{\bfseries}{.}{.5em}{\thmnote{#3}}
\theoremstyle{named}
\newtheorem*{namedprop}{Proposition}

\title{Proof of the bounded conformal conjecture}
\author{Sameer Kumar}
\address{Moscow Institute of Physics and Technology, Institutskiy pereulok 9, Dolgoprudny, 141700, Russia}
\email{kumar.samip@phystech.edu}
\date{}

\begin{document}

\begin{abstract}
    Given any asymptotically flat 3-manifold $(M,g)$ with smooth, non-empty, compact boundary $\Sigma$, the conformal conjecture states that for every $\delta>0$, there exists a metric $g' = u^4 g$, with $u$ a harmonic function, such that the area of outermost minimal area enclosure $\ts_{g'}$ of $\Sigma$ with respect to $g'$ is less than $\delta$. Recently, the conjecture was used to prove the Riemannian Penrose inequality for black holes with zero horizon area, and was proven to be true under the assumption of existence of only a finite number of minimal area enclosures of boundary $\Sigma$, and boundedness of harmonic function $u$. We prove the conjecture assuming only the boundedness of $u$.
\end{abstract}

\maketitle

\section{Introduction}

The definition of mass of objects in general relativity has been a fascinating topic for both mathematicians and physicists alike since decades. The ADM formalism \cite{adm}, named after its authors and describing the mass of an asymptotically flat manifold $(M,g)$,  is a widely studied topic in mathematical general relativity. It describes the mass of an asymptotically flat end of a manifold $(M,g)$ as a flux integral of the gravitational field over a large celestial sphere. A end of manifold $(M,g)$ is said be asymptotically flat if it is diffeomorphic to $\mathbb{R}^3 \backslash K$, where $K$ is a compact set, and the metric $g_{ij}$ satisfies the following conditions \cite{huiskenilmanen}: 

\begin{equation*}
    |g_{ij} - \delta_{ij}| \leq \frac{C}{|x|}, \;\; |g_{ij;k}| \leq \frac{C}{|x|^2}, \;\; Rc \geq - \frac{Cg}{|x|^2}
\end{equation*}

as $|x| \rightarrow \infty$, and where the derivatives in $g_{ij;k}$ are taken with respect to the flat metric on $\mathbb{R}^3 \backslash K$.

The ADM formalism defines the mass of an asymptotically flat spacetime $(M,g)$ as -

\begin{equation} \nonumber
    m_{ADM} = \lim_{r \rightarrow \infty} \frac{1}{16\pi}\int\limits_{\partial B_r} (g_{ij;i} - g_{ii;j})\nu^{j} d\mu_{\delta}
\end{equation}

where $\nu^j$ is the normal vector in j-th direction, and $d\mu_{\delta}$ represents the flat metric. \\

By Stokes' theorem, the above expression is simply the flux integral of the field form generated by the value under the integral through a large celestial sphere. This is similar to ``description of charge" in classical electrodynamics, and we can call the mass of an object as the \textit{``gravitational charge"} generating its gravitational field. \\

In this paper, by a \textit{black hole} we mean a connected component of the outermost compact minimal surface in $(M,g)$. We will give a more precise description of outermost minimal surfaces in the next section. Penrose \cite{Penrose:1969pc} conjectured that the total mass contributed by a black hole should be at least $\sqrt{A/16\pi}$, where $A$ is the area of the black hole. Penrose made this argument using physical principles; showing that if the condition was not satisfied, the manifold would admit Cauchy data for the Einstein's equation that would lead to naked singularities - a contradiction to the so called ``cosmic censorship principle", which states that singularities are always encapsulated by a black hole. This inequality is known as the \textit{Penrose inequality}. Schoen and Yau \cite{schoenyau1, schoenyau2} proved the following result known as the \textit{Positive mass theorem}.\\

\begin{theorem} (Positive mass theorem)
    Let $(M,g)$ be an asymptotically flat Riemannian 3-manifold with nonnegative scalar curvature. Then $m_{ADM} \geq 0$, with equality in the case when M is isometric to $\mathbb{R}^3$.
\end{theorem}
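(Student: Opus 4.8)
The plan is to follow the minimal‑surface method of Schoen and Yau and argue by contradiction; suppose $m_{ADM}<0$. A first reduction puts the metric in convenient form: after a conformal change $g\mapsto u^{4}g$, where $u>0$ solves $\Delta_{g}u=\tfrac18 R_{g}u$ with $u\to 1$ at infinity, followed by a density and perturbation argument, we may assume that $R_{g}\ge 0$, that $g$ has harmonic asymptotics in the end, i.e.\ $g=U^{4}\delta$ with $U=1+A|x|^{-1}+O(|x|^{-2})$, and that the mass is still negative, which in these coordinates means $A<0$. All of these modifications preserve the sign of the mass and the scalar‑curvature hypothesis.

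Next I use the sign of the mass to build barriers near infinity. When $A<0$ a direct computation shows that, in the asymptotic region, the coordinate planes $\{x_{3}=c\}$ --- more precisely, suitable graphs over them --- have mean‑curvature vectors pointing consistently, so they trap area‑minimizing surfaces between them. Solving the Plateau problem over an exhaustion of $M$ by large coordinate cylinders, with prescribed boundary a large circle in $\{x_{3}=0\}$, and passing to a subsequential limit, yields a complete, properly embedded, two‑sided, area‑minimizing --- hence stable --- surface $\Sigma^{2}\subset M$ with a single end that is asymptotic to a horizontal plane in the asymptotic coordinates and that has finite total curvature.

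Now I exploit stability. For every $\phi\in C_{c}^{\infty}(\Sigma)$ the second variation of area gives
\[ \int_{\Sigma}|\nabla_{\Sigma}\phi|^{2}\;\ge\;\int_{\Sigma}\bigl(|A_{\Sigma}|^{2}+\mathrm{Ric}_{g}(\nu,\nu)\bigr)\phi^{2}, \]
while the trace Gauss equation for the minimal ($H_{\Sigma}=0$) surface reads $|A_{\Sigma}|^{2}+\mathrm{Ric}_{g}(\nu,\nu)=\tfrac12 R_{g}+\tfrac12|A_{\Sigma}|^{2}-K_{\Sigma}$, with $K_{\Sigma}$ the Gauss curvature, so that
\[ \int_{\Sigma}\bigl(|\nabla_{\Sigma}\phi|^{2}+K_{\Sigma}\,\phi^{2}\bigr)\;\ge\;\tfrac12\int_{\Sigma}\bigl(R_{g}+|A_{\Sigma}|^{2}\bigr)\phi^{2}\;\ge\;0. \]
Feeding logarithmic cut‑offs $\phi\to 1$ into this inequality --- legitimate because $\Sigma$ is two‑dimensional with flat asymptotics, so $\int_{\Sigma}|\nabla_{\Sigma}\phi|^{2}\to 0$ --- gives $\int_{\Sigma}K_{\Sigma}\ge\tfrac12\int_{\Sigma}(R_{g}+|A_{\Sigma}|^{2})\ge 0$. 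On the other hand, Gauss--Bonnet on an exhaustion by compact subsurfaces, together with the asymptotically planar structure of the end (whose cone angle at infinity is $2\pi$), forces $\int_{\Sigma}K_{\Sigma}=2\pi\chi(\Sigma)-2\pi\le 0$ since $\chi(\Sigma)\le 1$. Hence $\int_{\Sigma}K_{\Sigma}=0$, $R_{g}\equiv 0$ along $\Sigma$, and $A_{\Sigma}\equiv 0$, so $\Sigma$ is totally geodesic and scalar‑flat. Varying the Plateau data to produce such a surface through a neighbourhood of every point and in every asymptotic direction, we see that $(M,g)$ is swept out by totally geodesic, scalar‑flat surfaces; a Cartan‑type argument then forces $\mathrm{Ric}_{g}\equiv 0$, so $g$ is flat and $m_{ADM}=0$, contradicting $m_{ADM}<0$. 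The equality case falls out of the same analysis: when $m_{ADM}=0$ the barriers are only marginal, but the construction still produces totally geodesic, scalar‑flat surfaces through every point, forcing $g$ to be flat, and asymptotic flatness together with completeness then gives $(M,g)\cong(\mathbb{R}^{3},\delta)$.

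The main obstacle is the analytic heart of the second step together with the endgame of the third: building the complete area‑minimizing surface $\Sigma$ with a single, asymptotically planar end and with enough regularity, curvature decay, and finite total curvature to run Gauss--Bonnet at infinity, and then making rigorous both the Gauss--Bonnet computation at infinity and the sweep‑out/Cartan argument that upgrades ``$\Sigma$ totally geodesic and scalar‑flat'' to ``$(M,g)$ flat''. An alternative that avoids minimal surfaces entirely is Witten's spinor argument: an orientable $3$‑manifold is parallelizable, hence spin, so one solves $D\psi=0$ with $\psi\to\psi_{\infty}$ a constant spinor at infinity in a suitable weighted Sobolev space (solvability uses $R_{g}\ge 0$), and integrates the Lichnerowicz identity $D^{2}=\nabla^{*}\nabla+\tfrac14 R_{g}$ over large balls: the interior term is $\int_{M}(|\nabla\psi|^{2}+\tfrac14 R_{g}|\psi|^{2})\ge 0$, while the boundary term at infinity equals a positive multiple of $m_{ADM}|\psi_{\infty}|^{2}$, so $m_{ADM}\ge 0$, with equality forcing a basis of parallel spinors and hence flatness. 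There the obstacle is instead the weighted elliptic existence theory for the Dirac operator and the precise identification of the boundary integral at infinity with the ADM mass.
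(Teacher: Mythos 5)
The paper does not give a proof of this theorem: it is stated as a background result and attributed to Schoen--Yau \cite{schoenyau1, schoenyau2} (with Witten's spinor proof \cite{witten1981new} also mentioned), so there is no internal proof in the paper to compare your attempt against.

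Taken on its own terms, your outline recognizably follows the Schoen--Yau minimal-surface strategy, and the alternative spinor route you record is also accurate in outline. The steps up to and including the construction of a complete, stable, asymptotically planar, area-minimizing surface $\Sigma$, the second-variation inequality combined with the traced Gauss equation, the logarithmic cut-offs, and the Cohn--Vossen/Gauss--Bonnet bound $\int_\Sigma K_\Sigma\le 2\pi\chi(\Sigma)-2\pi\le 0$ are all in the right places. The genuine gap is the endgame. Starting from $R_g\ge 0$ alone, stability gives only $\int_\Sigma K_\Sigma\ge 0$, which together with Gauss--Bonnet yields $\int_\Sigma K_\Sigma=0$, $R_g\equiv 0$ and $A_\Sigma\equiv 0$ along $\Sigma$ --- this is not yet a contradiction with $m_{ADM}<0$. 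Your proposed escalation, ``sweep out $M$ by totally geodesic, scalar-flat surfaces and apply a Cartan-type rigidity,'' is not a standard step and is not obviously closable: it presupposes the Plateau construction produces a foliating family through a neighbourhood of every point, which is itself a substantial claim and is essentially the content of the rigidity case. The standard way to close the inequality is instead to strengthen the reduction: after the conformal change and density argument, one arranges not merely $R_g\ge 0$ but $R_g>0$ on a fixed region that any trapped surface must cross (Schoen--Yau achieve this by a compactly supported deformation carried out so as to preserve $m_{ADM}<0$ and $R_g\ge 0$), so that stability gives the strict inequality $\int_\Sigma K_\Sigma>0$, directly contradicting Gauss--Bonnet. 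The rigidity statement ($m_{ADM}=0\Rightarrow (M,g)\cong(\mathbb{R}^3,\delta)$) is a separate argument and is not reachable by ``marginal barriers'' as you suggest; one either shows that a non-flat metric with $R_g\ge 0$ and $m_{ADM}=0$ can be deformed to strictly negative mass (then invoking the inequality), or, in Witten's approach, one produces a full frame of parallel spinors and concludes flatness from that.
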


Although less sharper than originally conjectured by Penrose, the paper was the first prominent step in the direction of the Penrose inequality. In \cite{witten1981new} another proof of was presented by Witten, with techniques involving spinors. The paper also lay forward the idea of a connection of  supergravity-like compactifications occuring due to possible instability of Minkowski space in higher dimensions, due to a possibly negative gravitational (ADM) mass. While it was unknown at that time, it is now known that the positive mass theorem holds for all dimensions \cite{schoen2017positive}, rendering Minkowski space as a stable vaccum solution to Einstein equations in all dimensions. Following the developments of Shoen-Yau and Witten, Huisken and Ilmanen \cite{huiskenilmanen} proved a sharper bound by considering the area of only a single connected component of $\Sigma = \partial M$ (equivalent to considering the ``largest blackhole" in the manifold) using inverse mean curvature flow. Almost simultaneously, Bray \cite{bray1999proof} proved the general case of multiple connected components of boundary $\partial M$ (i.e., multiple black hole horizons) using the positive mass theorem. \\ 

\begin{theorem}(Riemannian Penrose inequality)
    Let $(M,g)$ be an asymptotically flat 3-manifold with nonnegative scalar curvature such that $\partial M$ is smooth, compact and non empty. Then $m_{ADM} \geq \sqrt{\frac{A}{16\pi}}$, with the equality taking place when $(M,g)$ is isometric to Shwarzchild manifold with mass $m_{ADM}$. 
\end{theorem}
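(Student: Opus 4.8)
The plan is to follow the inverse mean curvature flow (IMCF) argument of Huisken and Ilmanen for the connected case, and to import Bray's conformal flow of metrics to handle several horizon components and the equality statement. The central quantity is the Hawking mass of a closed surface $S\subset M$,
\[
m_H(S)\;=\;\sqrt{\frac{|S|}{16\pi}}\left(1-\frac{1}{16\pi}\int_S H^2\,d\mu\right),
\]
where $H$ is the mean curvature of $S$ in $(M,g)$. First I would observe that when $\partial M=\Sigma$ is a connected outermost minimal surface one has $H\equiv 0$ on it, so $m_H(\Sigma)=\sqrt{A/16\pi}$, and that in the asymptotically flat end the Hawking mass of large coordinate spheres converges to $m_{ADM}$ (this is where the decay hypotheses $|g_{ij}-\delta_{ij}|\le C/|x|$, etc., enter). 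So it suffices to produce a family of surfaces interpolating between $\Sigma$ and the round spheres near infinity along which $m_H$ is nondecreasing.

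Second, I would run inverse mean curvature flow starting from $\Sigma$: the surfaces move with normal speed $1/H$, so that $|\Sigma_t|$ grows like $e^{t}$. Differentiating $m_H(\Sigma_t)$ and using the evolution equations for the area element, for the mean curvature and for $\int H^2$, together with the Gauss equation and the Gauss--Bonnet theorem on $\Sigma_t$, yields Geroch monotonicity: the $t$-derivative of $m_H(\Sigma_t)$ equals, schematically,
\[
\sqrt{\frac{|\Sigma_t|}{16\pi}}\cdot\frac{1}{16\pi}\left(8\pi\chi(\Sigma_t)-\int_{\Sigma_t}\bigl(\text{(nonnegative terms)}+R_g\bigr)\,d\mu\right),
\]
which is $\ge 0$ precisely because $R_g\ge 0$ and connectedness forces $\chi(\Sigma_t)\le 2$. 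Combining this with the endpoint computations gives $m_{ADM}\ge m_H(\Sigma)=\sqrt{A/16\pi}$. For rigidity, equality throughout forces the integrand above to vanish identically, so every $\Sigma_t$ is totally umbilic and $R_g=0$ along the sweep-out; integrating the resulting ODE for the warping function identifies $(M,g)$ with a spatial Schwarzschild slice of mass $m_{ADM}$.

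The main obstacle is that smooth IMCF generically fails to exist for all time --- the mean curvature can vanish and the surfaces can become singular or cease to be embedded. The resolution, and the technical heart of the argument, is the weak (level-set) formulation: one takes $\Sigma_t=\partial\{u<t\}$ for a proper function $u$ solving the degenerate elliptic equation $\operatorname{div}(\nabla u/|\nabla u|)=|\nabla u|$, obtained through elliptic regularization, and whenever the surface would stop being outward-minimizing it ``jumps'' to the boundary of its strictly outward-minimizing hull. One must establish existence, uniqueness up to jumps, and compactness together with partial regularity for $u$, and --- crucially --- verify that $m_H$ does not drop across a jump, which holds because the area is continuous across the jump while the part of the new surface that has changed is minimal, so $\int H^2$ does not increase. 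The second, genuinely unavoidable, limitation is that IMCF controls only a single connected component of $\partial M$; to obtain the full horizon area in the statement (and the Schwarzschild equality case when the horizon is disconnected) I would instead invoke Bray's conformal flow: deform $g$ through conformal metrics $g_t=u_t^4 g$ with $u_t$ solving a linear elliptic problem that has the horizon as a free boundary, show that the horizon area stays constant while $m_{ADM}(g_t)$ is nonincreasing along the flow, and that $g_t$ converges to a Schwarzschild metric for which the inequality is an equality, then transfer the resulting bound back to $(M,g)$.
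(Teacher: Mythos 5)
The paper does not prove this theorem; it states it as a background result and cites Huisken--Ilmanen (inverse mean curvature flow, connected boundary) and Bray (conformal flow, full boundary), exactly as it then explains in the remark on what $A$ denotes in each case. Your sketch correctly summarizes both of those arguments---Geroch monotonicity of the Hawking mass under the weak level-set formulation of IMCF, with the jump argument to handle non-outward-minimizing surfaces, and Bray's conformal flow (which, you should note, in turn invokes the positive mass theorem) to cover several horizon components and the rigidity case---so it is essentially the same route the paper implicitly defers to.
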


Here, $A$ denotes the area of a single connected component of $\partial M$ in the theorem of Huisken and Ilmanen \cite{huiskenilmanen}, and the area of $\partial M$ in the theorem of Bray \cite{bray1999proof}. For a thorough review of the above-said developments, see \cite{Bray:2003ns}.

The current work is connected to a progression of the work stated above, for manifolds where the metric has a so-called \textit{zero area singularity}. Zero area singularities (ZAS) are horizons (i.e. components of the boundary $\partial M$) where the metric trivializes, and hence the area of the horizon tends to zero in a weak surface convergence sense. This is the case, for example, of a Shwarzchild metric with a black hole with ``negative mass". Mass bounds for zero area singularities concerns the stability of the existence of such black holes. We will discuss the technical details regarding the convergence of surfaces, and the definition of ZAS in section \ref{section: formalisms}. \\

The conformal conjecture was introduced by Bray in \cite{Bray:2009voj} to prove the \textit{Riemannian ZAS inequality}. Jauregui \cite{jeff} introduced a weaker statement of the above conjecture, equivalent to the original. 

\begin{conjecture} \label{conjecture:conf_conj} (Conformal conjecture)
    Given $\delta > 0$, there exists $g' \in \overline{\mathcal{H}(g)}$ with outermost minimal area enclosure $\ts_{g'}$ such that : 
    \begin{enumerate}
        \item $\ts_{g'}$ is disjoint from $\Sigma$, and
        \item $| |\ts_{g'}|_{g'} - |\Sigma|_{g'} | < \delta$ 
    \end{enumerate}
\end{conjecture}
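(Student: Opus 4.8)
The plan is to connect $g$, through a continuous path of \emph{bounded} harmonic conformal factors, to a metric in $\gh$ in which $\Sigma$ is the area-minimising enclosure of itself --- but only marginally so, so that slightly earlier on the path $\Sigma$ strictly fails to be outer-minimising --- and to take $g'$ to be a metric near the end of this path. Two conformal identities for $\tilde g = u^4 g$ on a $3$-manifold are used throughout: areas scale by $|S|_{\tilde g} = \int_S u^4\,d\mu_g$, and a hypersurface with $g$-unit normal $\nu$ has $H_{\tilde g} = u^{-3}\bigl(u\,H_g + 4\,\partial_\nu u\bigr)$. The first shows that the quantity to be controlled is the nonnegative gap $\rho(g') := |\Sigma|_{g'} - |\ts_{g'}|_{g'}$ --- nonnegative since $\ts_{g'}$ minimises area among surfaces enclosing $\Sigma$, of which $\Sigma$ is one --- so that condition $(2)$ reads $\rho(g') < \delta$. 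The second, with $u$ harmonic, both keeps the scalar curvature of $g'$ nonnegative and makes the boundary datum $u\,H_g + 4\,\partial_\nu u$ on $\Sigma$ control the sign of $H_{g'}$, hence whether $\Sigma$ is locally outer-minimising in $g'$, hence (via the maximum principle) whether $\ts_{g'}$ is forced strictly off $\Sigma$.

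\textbf{Step 1: the path.} I would construct $g_t = u_t^4 g \in \mathcal{H}(g) \subseteq \gh$, $t \in [0,T)$, with $g_0 = g$, each $u_t$ bounded, positive, harmonic, $u_t \to 1$ at infinity, and the $u_t$ converging locally uniformly as $t \to T$ to a bounded harmonic $u_T$ giving $g_T \in \gh$; the boundary data are chosen (for instance by imposing a Robin condition $4\partial_\nu u_t + u_t H_g = \varepsilon(t)\,u_t$ on $\Sigma$ with $\varepsilon(t)>0$ decreasing to $0$, which forces $H_{g_t} = \varepsilon(t)\,u_t^{-2}$ on $\Sigma$) so that, uniformly for $t$ bounded away from $T$, $\Sigma$ strictly fails to be outer-minimising in $g_t$, while at the endpoint $\Sigma$ is the area-minimising enclosure for $g_T$. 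For each $t < T$ the strict failure of outer-minimality together with the strong maximum principle forces $\ts_{g_t}$ to be strictly disjoint from $\Sigma$, so condition $(1)$ holds for every $g_t$ with $t < T$; the problem reduces to showing $\rho(g_t) \to 0$ as $t \to T$.

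\textbf{Step 2: closing the gap, and why finiteness is not needed.} Set $I(g_t) := |\ts_{g_t}|_{g_t} = \inf\{\,|S|_{g_t} : S \text{ encloses } \Sigma\,\}$, so $\rho(g_t) = |\Sigma|_{g_t} - I(g_t)$; since $|\Sigma|_{g_t} \to |\Sigma|_{g_T}$ and $I$ is upper semicontinuous along the path (an infimum of continuous functions), one has $\limsup_{t\to T} I(g_t) \le I(g_T) = |\Sigma|_{g_T}$, so it suffices to prove the reverse bound $\liminf_{t\to T} I(g_t) \ge |\Sigma|_{g_T}$. This is the heart of the matter: as $t\to T$ the outermost minimal area enclosure $\ts_{g_t}$ recedes onto $\Sigma$ only after passing through the set of minimal area enclosures of $\Sigma$, which in general is a closed lamination and may be a continuum, so $t\mapsto\ts_{g_t}$ can have a wild jump set and the finite induction over enclosures used in Jauregui's proof is unavailable. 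Instead I would run a compactness argument: for any $t_k\to T$, boundedness of $u_{t_k}$ makes the $g_{t_k}$-areas $I(g_{t_k})\le|\Sigma|_{g_{t_k}}$ uniformly bounded, so the enclosed regions subconverge in $L^1$ (by compactness of sets of finite perimeter, with uniform-in-$t$ local geometry bounds) to a set of finite perimeter bounded by $\Sigma$ together with an enclosure $S_\infty$, and lower semicontinuity of area under this convergence gives $|S_\infty|_{g_T}\le\liminf I(g_{t_k})$. If the limiting region is nontrivial then $S_\infty$ is a bona fide enclosure and $\liminf I(g_{t_k})\ge I(g_T)=|\Sigma|_{g_T}$; if it is trivial then $\ts_{g_{t_k}}\to\Sigma$ and $\liminf I(g_{t_k})=|\Sigma|_{g_T}$. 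Either way $\rho(g_t)\to 0$, and choosing $t<T$ with $\rho(g_t)<\delta$ yields $g' = g_t\in\gh$ with $\ts_{g'}$ disjoint from $\Sigma$ and $\bigl|\,|\ts_{g'}|_{g'} - |\Sigma|_{g'}\,\bigr| = \rho(g')<\delta$, as required.

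\textbf{Main difficulty.} I expect the genuine obstacles to be two, both of which the finiteness hypothesis circumvented: first, arranging the endpoint of the path so that $\Sigma$ becomes exactly the area-minimising enclosure in $g_T$ --- i.e.\ conformally absorbing \emph{all} of the minimal area enclosures trapped between $\Sigma$ and $\ts_g$, not just finitely many; and second, the compactness step of Step 2, namely ruling out that the minimisers $\ts_{g_{t_k}}$ degenerate, escape to infinity, or concentrate as $t_k\to T$. Both hinge on having area and geometry bounds that are \emph{uniform} along the path, and this is exactly where boundedness of $u$ is indispensable --- it is the one hypothesis of Jauregui's theorem we keep, and it also guarantees that $g_T\in\gh$ so that the path never leaves the class in which the conjecture is stated.
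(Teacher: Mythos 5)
Your proposal takes a genuinely different route from the paper. The paper inherits Jauregui's variational framework: it proves existence of a maximizer $g'$ for the functional $\alpha(A)=\sup\{\min(\Sigma,g''):|\Sigma|_{g''}\leq A\}$ and derives a contradiction from the assumption $\ts_{g'}\cap\Sigma\neq\emptyset$, by running a first-order variation $g_t=u_t^4g$ at the maximizer. The technical content of the paper — lower partial density, partial slicing, the modified Uniform Fatou Lemma — exists entirely to make the first-order estimate uniform over a possibly uncountable family of minimal area enclosures, which is what replaces Jauregui's finiteness hypothesis. You bypass maximizers entirely and propose a direct path to a metric $g_T$ in which $\Sigma$ is itself the minimal area enclosure, using a Robin boundary condition to steer $H$ on $\Sigma$, then $L^1$-compactness of sets of finite perimeter to close the area gap. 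These really are different proof architectures.

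The proposal, however, has a gap that is not merely unfilled but appears to be of the same order of difficulty as the theorem itself, and you flag it yourself at the end: you assume $I(g_T)=|\Sigma|_{g_T}$, i.e. that at the endpoint $\Sigma$ is the area-minimizing enclosure, so that $\ts_{g_T}=\Sigma$. The Robin condition $4\partial_\nu u_T+u_T H_g=0$ only forces $H_{g_T}=0$ on $\Sigma$, which makes $\Sigma$ a minimal surface in $g_T$; it is a purely local statement and gives no control over enclosures of $\Sigma$ elsewhere in $M$. There is nothing in the construction preventing some surface $S$ far from $\Sigma$ from having $|S|_{g_T}<|\Sigma|_{g_T}$, in which case $I(g_T)<|\Sigma|_{g_T}$, the claimed upper bound $\limsup_{t\to T}I(g_t)\leq|\Sigma|_{g_T}$ becomes vacuous, and Step 2 does not close the gap $\rho(g_t)$. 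Turning $\Sigma$ into the global area-minimizing enclosure, not merely a minimal surface, by a conformal deformation with bounded harmonic factor is essentially what the conjecture asks, so the argument is circular at this point. There is a secondary issue with disjointness: the outermost minimal area enclosure in the relevant setting is $C^{1,1}$ with $H\geq 0$, not a classical minimal surface up to $\Sigma$, and $\ts\cap\Sigma$ can be nonempty and of measure zero even when $\Sigma$ has strictly positive mean curvature; the version of the strong maximum principle needed for a $C^{1,1}$ barrier with $L^\infty$ mean curvature is more delicate than the smooth comparison principle you invoke, and is precisely the phenomenon the paper's partial-density machinery (Lemmas \ref{lemma: zeropardensity_mgeq0} and \ref{lemma: zeropardensity_meq0}) is built to handle. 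Your observation that finite-perimeter compactness could in principle replace the finite-induction step of Jauregui's proof is interesting and plausible as a tool, but without a construction of $g_T$ it does not yield a proof.
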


The above statement is readily verified to hold in the simplest case: the spherically symmetric metric: all minimal area enclosures in this case are spheres that are either equal to the boundary or completely disjoint from it (and hence a minimal surface since smoothness is guaranteed by the results of Huisken-Ilmanen). The harmonic function in this case can be picked to be a scalar, say $c$, on the boundary and approaching $1$ at infinity. If $c$ is taken to be very small then the minimal area enclosure would be the boundary itself, while taking $c$ very large would explode the metric near the boundary and hence make its minimal area enclosure disjoint from it. Picking some intermediate value of $c$ would result in a neck-like structure as we move away from the boundary and towards infinity. Hence, we get a minimal area enclosure that is disjoint from the boundary, and we can also make the areas of the two surfaces arbitrarily close by carefully picking the parameter $c$. Our goal is to estabilish this result for an arbitrary Riemannian metric $g$. In this paper, we prove the following theorem. 

\begin{theorem} \label{theorem: main_theorem} (Main theorem)
    Conformal conjecture is true under the following additional assumption:
    \begin{enumerate}
        \item the maximizer $g' = u^4g$ of Theorem 3 below satisfies $u \leq C$ for a constant $C > 0$.
    \end{enumerate}
\end{theorem}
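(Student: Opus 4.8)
The plan is to show that the maximizer $g' = u^4 g \in \gh$ furnished by Theorem 3 --- which, by the standing hypothesis, satisfies $u \le C$ --- itself has an outermost minimal area enclosure $\ts_{g'}$ satisfying conditions (1) and (2) of Conjecture \ref{conjecture:conf_conj}, so that $g'$ is directly the required witness for the given $\delta$.

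I would first reduce condition (2) to a one-sided bound: since $\Sigma$ is a competitor for its own outermost minimal area enclosure, $|\ts_{g'}|_{g'} \le |\Sigma|_{g'}$, so (2) is equivalent to $|\Sigma|_{g'} - |\ts_{g'}|_{g'} < \delta$, i.e.\ $\Sigma$ is ``$\delta$-almost outer-minimizing'' in $g'$. Suppose not: $|\Sigma|_{g'} - |\ts_{g'}|_{g'} \ge \delta$. The bound $u \le C$ ensures $g'$ is a genuine asymptotically flat metric and, crucially, that $u$ is bounded away from zero on compact sets disjoint from $\Sigma$; this is exactly the degeneracy the hypothesis is meant to preclude, and it forces $\ts_{g'}$ to be a smooth, stable minimal surface lying at positive $g'$-distance from $\Sigma$, with uniform area and curvature bounds valid also for minimal surfaces $g'$-close to it.

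The heart of the matter --- and the place where Jauregui's finiteness assumption entered --- is to perturb $g'$ within $\gh$ against its maximality. I would construct a one-parameter family $u_t \in \gh$ with $u_0 = u$, decreasing the boundary data of $u$ on $\Sigma$ so that $|\Sigma|_{u_t^4 g}$ strictly decreases at $t = 0^+$, and compare this decrease with that of $|\ts_{u_t^4 g}|_{u_t^4 g}$ so as to show the Theorem 3 functional strictly increases. Since the minimal area enclosures of $\Sigma$ may now form a continuum, $t \mapsto \ts_{u_t^4 g}$ need not vary continuously, so in place of a first-variation computation I would use (i) the lattice structure of the regions bounded by minimal area enclosures, which keeps the outermost one well defined, and (ii) semicontinuity of $t \mapsto |\ts_{u_t^4 g}|_{u_t^4 g}$ coming from compactness of stable minimal surfaces under the uniform bounds above, to conclude that $|\ts_{u_t^4 g}|_{u_t^4 g}$ cannot drop fast enough --- the rate being controlled by $\delta$ --- to offset the loss in $|\Sigma|_{u_t^4 g}$. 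This contradiction proves (2); applying the same perturbation machinery to boundary data concentrated near any would-be contact set of $\ts_{g'}$ with $\Sigma$ yields (1).

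The principal obstacle I anticipate is exactly this semicontinuity/lattice analysis of the outermost minimal area enclosure under conformal perturbation: removing the finiteness hypothesis means controlling a whole continuum of competing minimal area enclosures and arranging that the perturbed areas vary well enough in $t$ for the variational inequality to bite --- and it is precisely the uniform geometric estimates afforded by $u \le C$ that make the required compactness available. A secondary point is to choose $u_t$ so that the decrease of $|\Sigma|_{u_t^4 g}$ and the at-worst-comparable decrease of $|\ts_{u_t^4 g}|_{u_t^4 g}$ are quantitatively separated by a margin depending only on $\delta$.
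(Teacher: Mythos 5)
Your proposal has the right global skeleton --- perturb the maximizer $g'$ within $\gh$ and contradict maximality --- but there are two genuine gaps, both concentrated exactly where the finiteness hypothesis used to do the work.

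First, you assert in passing that $u \le C$ ``forces $\ts_{g'}$ to be a smooth, stable minimal surface lying at positive $g'$-distance from $\Sigma$.'' That is condition (1) of the conjecture and is precisely what must be proven; it does not follow from the upper bound on $u$. (The lower bound $f \ge \epsilon$ on the boundary data, which you seem to be invoking, is Proposition 33 of \cite{jeff} and is independent of $u \le C$; neither bound tells you anything directly about whether $\ts_{g'}$ touches $\Sigma$.) The paper's entire machinery --- the diffeomorphism $\Phi$ near $p \in \ts \cap \Sigma$, the cone $C_\alpha$, and lower partial density --- exists to analyze the case where contact does occur and to rule it out by a variational contradiction. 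You defer this to a sentence at the end (``applying the same perturbation machinery... yields (1)''), but this is where the real work lies and it cannot be a footnote.

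Second, and more centrally, your proposal to handle the continuum of minimal area enclosures via ``lattice structure'' and ``semicontinuity coming from compactness of stable minimal surfaces'' is where the paper's actual new ideas live, and as stated your plan does not supply them. The paper does not control $t \mapsto \ts_{g_t}$ at all; instead it (a) proves that an outermost minimal area enclosure cannot have zero lower partial density at a contact point (Lemmas \ref{lemma: zeropardensity_mgeq0} and \ref{lemma: zeropardensity_meq0}, via a partial-slicing argument and a coarea argument), thereby excising those surfaces from the competition, and then (b) makes the first-variation inequality \eqref{eqn: prop1_ineq} hold \emph{uniformly over the possibly infinite family} of remaining minimal area enclosures by establishing a uniform Fatou lemma (Theorem \ref{theorem: modified_fatou}), fed by the uniform convergence $v_\sigma \to K(\cdot, p)$ proved in Lemmas \ref{lemma: strong convergence of u}--\ref{lemma: distributional conv h_sigma}. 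Without a quantitative substitute for that uniform lower bound on the first variation, semicontinuity of areas alone will not prevent the infimum over the continuum of minimal area enclosures from degenerating to zero --- which is exactly the failure mode the finiteness hypothesis was guarding against.

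A smaller point: your perturbation direction (push $|\Sigma|_{u_t^4 g}$ strictly down, then compare rates) is not the one the paper uses. The path $f_t = f + t(h_\sigma/f^3 - k_\sigma f)$ is normalized so that $\frac{d}{dt}\big|_0 |\Sigma|_{g_t} = 0$ and the minimal enclosing area increases; the contradiction is then obtained against $\sup\{\min(\Sigma, g') : |\Sigma|_{g'} \le A\}$ with the constraint held fixed to first order. Your one-sided version could in principle be made to work, but you would then also need to control the rescaling step carefully, and that is an extra burden on top of the two gaps above.
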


Here, $\overline{\mathcal{H}(g)}$ is the generalised harmonic conformal class (discussed further in section \ref{section: formalisms}). The above theorem is an improvement of the following previously known result, and our approach is a generalisation of the approach developed in \cite{jeff}.

\begin{theorem} \label{theorem: maintheorem_jau} (Theorem 29 \cite{jeff})
    Conformal conjecture is true under the following additional assumptions:
    \begin{enumerate}
        \item the maximizer $g' = u^4g$ of Theorem 3 below satisfies $u \leq C$ for a constant $C > 0$, and
        \item the number of minimal area enclosures of $\Sigma$ with respect to $g'$ is finite.
    \end{enumerate}
\end{theorem}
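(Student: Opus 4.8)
The plan is to show that the maximizer $g'=u^4g$ furnished by Theorem 3, with $u\le C$, can be deformed within $\gh$ to contradict its own maximality unless its outermost minimal area enclosure is already $\delta$-good, so that in fact it witnesses the conjecture. Fix $\delta>0$ and suppose, for contradiction, that the conjecture fails for this $\delta$; in particular the maximizer $g'$ fails it, so its outermost minimal area enclosure $\ts_{g'}$ satisfies $| \, |\ts_{g'}|_{g'} - |\Sigma|_{g'} \, | \ge \delta$. (Condition (1) of the conjecture will be recovered for free at the end, by stopping the deformation slightly short of its limit.) Let $\Omega\subset M$ be the precompact ``collar'' bounded by $\Sigma$ on one side and $\ts_{g'}$ on the other. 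Following [jeff], the idea is to replace $u$ on $\Omega$ by a smaller $g$-harmonic function $\hat u$, obtaining $\hat g=\hat u^4 g\in\gh$ whose outermost minimal area enclosure has a strictly smaller gap, contradicting maximality since the functional maximised in Theorem 3 strictly increases under deformations that strictly shrink this gap.

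The essential new ingredient is the structure of the family $\mathcal{E}$ of minimal area enclosures of $\Sigma$ with respect to $g'$ that persists without the finiteness hypothesis. The cut-and-paste and lower-semicontinuity arguments of Huisken--Ilmanen show that $\mathcal{E}$ is a lattice, closed under countable unions and intersections; hence it has a largest element, namely $\ts_{g'}$, and a smallest element $\Sigma_\ast\supseteq\Sigma$, and for each threshold $t$ the intersection of all members of $g'$-area at least $t$ lies again in $\mathcal{E}$. This produces a monotone one-parameter family $\{\Sigma_t\}$ interpolating between $\Sigma_\ast$ and $\ts_{g'}$ whose area function $t\mapsto|\Sigma_t|_{g'}$ is monotone, hence continuous off an at most countable set. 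This monotone family is the substitute for Jauregui's finite nested chain $\Sigma=\Sigma_0\subsetneq\cdots\subsetneq\Sigma_N=\ts_{g'}$; the subsequent deformation argument of [jeff], which proceeds one ``annulus'' $\Sigma_i\to\Sigma_{i+1}$ at a time, will be run against $\{\Sigma_t\}$ in one stroke rather than inductively.

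For the deformation itself I would take $\hat u$ to agree with $u$ outside $\ts_{g'}$ and, on $\Omega$, to be the bounded $g$-harmonic solution of the condenser problem adapted to the pair $(\Sigma,\ts_{g'})$ --- the same boundary behaviour towards $\Sigma$ as $u$ (in the ZAS class, $\hat u\to 0$ there) and $\hat u=u$ on $\ts_{g'}$. Since $u-\hat u$ is harmonic, vanishes on $\ts_{g'}$ and is nonnegative towards $\Sigma$, the maximum principle gives $\hat u\le u$ on $\Omega$, so $\hat u\le u\le C$; thus $\hat g\in\gh$ and lies in the regime where the Theorem 3 functional is defined and continuous. A calibration comparison of $g'$- and $\hat g$-areas of competitor surfaces inside $\Omega$ --- using $\hat u<u$ on the interior of $\Omega$ together with the monotonicity of $\hat u$ along $\{\Sigma_t\}$ --- then shows that the new outermost minimal area enclosure satisfies $\ts_{\hat g}\subseteq\ts_{g'}$ and $| \, |\ts_{\hat g}|_{\hat g} - |\Sigma|_{\hat g} \, | < | \, |\ts_{g'}|_{g'} - |\Sigma|_{g'} \, |$ as soon as the right-hand side is positive, which contradicts the maximality of $g'$. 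Finally, if in this deformation one does not pass all the way to the harmonic limit but stops at a nearby $\hat g$, then $\ts_{\hat g}$ is a genuine surface of positive area, hence disjoint from the (degenerating) $\Sigma$ by the strong maximum principle --- this gives condition (1).

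The main obstacle is the strict inequality $| \, |\ts_{\hat g}|_{\hat g} - |\Sigma|_{\hat g} \, | < | \, |\ts_{g'}|_{g'} - |\Sigma|_{g'} \, |$: one must rule out that the deformation merely slides the outermost minimal area enclosure inward while leaving its area essentially unchanged, which is precisely the scenario that finiteness excluded by hand. The dangerous configuration is a foliation of $\Omega$ by a continuum of minimal area enclosures, against which an ad hoc interpolation for $\hat u$ could fail to shrink area uniformly. The point of choosing $\hat u$ to be the condenser potential is that then $\hat u<u$ on a set meeting every member of $\mathcal{E}$, so across each minimal area enclosure the conformal factor is strictly contracted; the remaining work is to upgrade this strict pointwise contraction to a strict decrease of the enclosure area uniformly in $t$, which is done by a monotone-convergence argument reducing the possibly uncountable family $\{\Sigma_t\}$ to an effective countable subfamily on which the Huisken--Ilmanen lattice estimates apply directly. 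The hypothesis $u\le C$ is used exactly to keep $\hat u$ --- and the whole one-parameter deformation towards the limit --- bounded, so that the enclosure-area estimates are uniform and $\hat g$ never leaves $\gh$; this is presumably the single feature of the argument that cannot yet be removed.
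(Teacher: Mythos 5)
Your proposal does not follow the route of Theorem 29 of \cite{jeff} (which this paper only cites and then generalises), and as written it contains gaps that I think are fatal rather than cosmetic. The most basic one is that your deformed metric $\hat g=\hat u^4g$ does not stay in $\gh$. Elements of $\gh$ are of the form $u^4g$ with $u$ globally $g$-harmonic on $\mathrm{int}\,M$, reconstructed from $L^4$ boundary data on $\Sigma$ via the Poisson kernel; you cannot prescribe $\hat u$ independently on the collar $\Omega$ between $\Sigma$ and $\ts_{g'}$ and glue it to $u$ outside, because the result fails to be harmonic across $\ts_{g'}$. Worse, if you insist that $\hat u$ be harmonic on $\Omega$ with the same boundary behaviour as $u$ towards $\Sigma$ and $\hat u=u$ on $\ts_{g'}$, then $u-\hat u$ is harmonic on $\Omega$ and vanishes on both boundary components, so $\hat u\equiv u$ and the deformation is trivial; your own description is internally inconsistent on this point ($u-\hat u$ cannot simultaneously ``vanish on $\ts_{g'}$ and be nonnegative towards $\Sigma$'' in a way that makes it nonzero while matching $u$'s boundary data). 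A second unjustified step is the claim that the functional maximised in Theorem 3, namely $\min(\Sigma,\cdot)$, \emph{increases} under a deformation that shrinks the gap $|\,|\ts|_{g'}-|\Sigma|_{g'}|$. Decreasing the conformal factor on the collar decreases the areas of competitor surfaces there, so it can only push $\min(\Sigma,\cdot)$ down; no contradiction with maximality results.

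The actual argument runs in the opposite direction and hinges on the contact set. One assumes $\ts_{g'}$ meets $\Sigma$ at some point $p$ (the contact set has $g$-measure zero), perturbs the boundary data by $f_t=f+t(h_\sigma/f^3-k_\sigma f)$ with $h_\sigma$ a normalised bump at $p$, so that $|\Sigma|$ is unchanged to first order, and shows that the first variation of the area of \emph{every} minimal area enclosure is bounded below by $\int_{S\setminus\Sigma}u^3w_\sigma\,d\hm^2_g-\sqrt{\alpha(A)/A}$, which is made positive by the blow-up of the Poisson kernel $K(\cdot,p)$ integrated over a surface of positive (partial) density at $p$. This strictly increases $\min(\Sigma,\cdot)$ along a normalised path in $\gh$, contradicting maximality; disjointness of $\ts_{g'}$ from $\Sigma$ follows, and the area estimate comes from the properties of $\alpha(A)$. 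Your proposal omits precisely this density/Poisson-kernel mechanism, which is where both hypotheses of the theorem enter: boundedness of $u$ controls the first-variation formula, and finiteness of the family of minimal area enclosures gives the uniform positive lower bound on the variation over that family (the point of the present paper is to replace finiteness by a uniform Fatou argument). Treating condition (1) of the conjecture by ``stopping the deformation slightly short of its limit'' does not engage with the real difficulty, which is a possible measure-zero contact set of the outermost minimal area enclosure with $\Sigma$.
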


This paper is structured as follows: section \ref{section: formalisms} contains the background concepts to be used later, section \ref{section: heuristic_proof} contains a heuristic proof of the main theorem, section \ref{section: proof} contains the preliminary lemmas leading to the proof of the main theorem, section \ref{section: main_theorem} contains our proof of the main theorem, and section \ref{section: discussion} contains a short discussion on some open problems related to this paper, and possible approaches for solving them.

\section*{Acknowledgement}

I wish to thank Prof. Jeffrey L. Jauregui for the helpful and motivating correspondence, and many useful remarks that helped in the direction of this research. Their doctoral thesis is the central motivation for this paper. I also wish to thank my advisor Dr. Edvard Musaev for constant support and encouragement, and Rados for unwavering support and help with the correction of typos.


\section{Surfaces, the harmonic class, and density} \label{section: formalisms}


The constructions in this section largely follow \cite{jeff}, around which our argument has been constructed.

\subsection{Surfaces and minimal area enclosures} \label{section: surfaces and mae}

Surfaces can be defined in many ways in geometry. One useful way to define surfaces is as functionals acting on forms over them, as followed in \cite{jeff} (see also \cite{federer1969geometric}, \cite{simon}). We call this object a current. 

\begin{definition}
    (Currents) Let M be a manifold of dimension $m \geq n$. An n-current in $M$ is a linear functional on the space $C_{o}^{\infty}(\Lambda^n M)$ of compactly supported, smooth, differential n-forms. 
\end{definition}

Submanifolds define current in a natural way - let $U \subset M$ be a k-submanifold ($ k \geq n$) of M. Then an n-current associated to $U$ is defined as 

\begin{equation} \nonumber 
    U(\phi) = \int\limits_{U} \phi
\end{equation}

for $\phi \in C_{o}^{\infty}(\Lambda^n U)$. \\

A \textit{boundary current} is made to conform to Stokes' theorem: $\partial S(\phi) = S(\partial \phi)$. An \textit{integral n-current} is a current whose boundary is an n-1 current. We can also define the support of a current in a natural way. 

\begin{definition}

    The support of an n-current $S$ (supp $\phi$) is defined as the intersection of all closed sets $C$ such that 

    \begin{equation*}
        \text{supp } \phi \cap C = \emptyset \Rightarrow S(\phi) = 0
    \end{equation*}

    for all $\phi \in C_{o}^{\infty}(\Lambda^n S)$, where $\text{supp } \phi$ is the support of $\phi$.
    
\end{definition}

Next, we define the \textit{mass norm} of a current $S$ in $M$.

\begin{definition}
    The mass norm $|S|$ of an n-current $S \subset M$ is defined as 
    \begin{equation*}
    |S| = \sup\limits_{\phi} \{ S(\phi), \; \phi \in C_{o}^{\infty}(\Lambda^n S): ||\phi|| \leq 1 \}
    \end{equation*}

    where $|| \cdot ||$ is the max norm for differential n-form $\phi$.

\end{definition}

A surface in $M$ is hence realised as a 2-current in $M$. We can also define the convergence of surfaces in terms of currents as follows. 

\begin{definition} 
    A sequence of n-currents $\{S_i\}$ is said to converge (weakly) to an n-current $S$ if 

    \begin{equation*}
        S_i (\phi) \rightarrow S(\phi)
    \end{equation*}
    for all $\phi \in C_{o}^{\infty}(\Lambda^n \mathbb{R}^{n+k})$, where $k \geq 0$.
\end{definition}

The above definitions get a more geometric meaning once we define a particular choice of measure in $\mathbb{R}^{n+k}$.

\begin{definition}
    The Hausdorff measure of a set $S \subset \mathbb{R}^{n+k}$ is defined as:

    \begin{equation*}
        \hm^{n}(S) = \lim\limits_{\delta \rightarrow 0} \inf\limits_{\{C_j\}} \sum\limits_{j=1}^{\infty} \omega_n \Big( \frac{\text{diam }C_j}{2} \Big) 
    \end{equation*}
    where $\omega_n$ is the volume of a unit ball in $\mathbb{R}^n$, and the infimum is taken over all countable collections $\{C_j\}$ covering $S$, such that $\text{diam }C_j < \delta$ for each j.
\end{definition}

Then the current of a $\hm^n$ measurable set $\Omega \subset \mathbb{R}^{n}$ can be defined as a functional acting on $\phi \in C_{o}^{\infty}(\Lambda^n \mathbb{R}^{n})$ 

\begin{equation*}
    S(\phi) = \int\limits_{\Omega} \rho d\hm^n
\end{equation*}

since any $\phi \in C_{o}^{\infty}(\Lambda^n \mathbb{R}^{n+k})$ can be decomposed as $\phi = \rho \omega$, with $\omega$ being the volume form for $\mathbb{R}^n$. This, in particular, allows us to equate the mass norm to the Hausdorff measure of the set: $|S| = \hm^n(S)$. \\

We denote the Hausdorff k-measure on $M$ with respect to the metric $g$ as $\hm^k_g$. Let $\Omega \subset M$ be a $\hm^3_g$-measurable set. We will not distinguish notationally between $\Omega$ being viewed as a set or a current and use the symbol $\Omega$ for both purposes. We let $\Sigma = -\partial M$ (the normal at the boundary points into the manifold). We can now define an enclosing surface as follows.

\begin{definition}
    Let $\Omega \subset M$ be an $\hm^3_g$-measurable, compact set that defines an integral 3-current (of multiplicity one). A surface enclosing $\Sigma$ is a 2-current S of the form

    \begin{equation*}
        S = \Sigma + \partial \Omega
    \end{equation*}

    We say that $S$ encloses $\Sigma$ properly if $S$ does not intersect $\Sigma$. Also, we say that $S$ is $C^{k,a}$ if the supp $S$ is $C^{k,a}$ embedded submanifold of M. If $S_i = \Sigma + \partial \Omega_i$, $i = 1,2$, then we further say $S_1$ encloses $S_2$ if supp $S_2 \subset$ supp $S_1$.
\end{definition}

By the statement $p \in S$, we mean that $p \in \text{supp } S$, $S \cap \Sigma$ means the restriction of current $S$ to the set $\Sigma$ (defined in the integral sense as the integral being evaluated on the intersection $S \cap \Sigma$, also denoted as $S \llcorner \Sigma$), and $S \backslash \Sigma$ as the restriction of $S$ to the set $M \backslash \Sigma$.

\begin{definition}
    A surface $S$ enclosing $\Sigma$ is called a minimal area enclosure of $\Sigma$ with respect to $g$, if for all surfaces $T$ enclosing $\Sigma$

    \begin{equation*}
        |S|_g \leq |T|_g
    \end{equation*}
    where $| \cdot |_g$ is the mass measure with respect to the metric $g$.
\end{definition}

As is evident from the definition, multiple (possibly infinitely many) minimal area enclosures of $\Sigma$ can exist and all of them will have the same area. An \textit{outermost minimal area enclosure} of $\Sigma$ with respect to $g$, denoted $\ts_g$, is a minimal area enclosure of $\s$ with respect to $g$ that encloses every other minimal area enclosures of $\Sigma$ with respect to $g$. The following result about the existence of outermost minimal area enclosures is well known\cite{bray1999proof, huiskenilmanen}.

\begin{theorem}
    Let $(M,g)$ be an asymptotically flat 3-manifold with smooth, non-empty, compact boundary. Let $g$ extend smoothly to the boundary. Then, there exists a unique surface $\ts$ enclosing $\Sigma$ with the following properties:

    \begin{enumerate}
        \item $\ts$ is a minimal area enclosure of $\Sigma$ with respect to $g$.
        \item If S is any other minimal area enclosure of $\Sigma$ with repsect to $g$, then $\ts$ encloses $S$.
    \end{enumerate}
    Moreover, $\ts$ is an embedded $C^{1,1}$ surface with non-negative mean curvature, and $\ts \backslash \Sigma$, if non-empty, is $C^{\infty}$ with zero mean curvature.
\end{theorem}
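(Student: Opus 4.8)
The plan is to establish the theorem in three stages — existence of a minimal area enclosure, construction of the outermost one, and regularity — following the geometric measure theory approach of \cite{huiskenilmanen, bray1999proof}. For existence, I would first fix a large coordinate sphere $S_r = \partial B_r$ in the asymptotically flat end; the asymptotic decay of $g$ forces $S_r$ to be strictly mean-convex once $r$ is large, so it serves as a barrier. Since $\s + \partial B_r$ encloses $\s$, the infimum $\mathcal{A} := \inf\{\,|T|_g : T \text{ encloses } \s\,\}$ is finite. Taking a minimizing sequence $S_i = \s + \partial\Omega_i$ with $|S_i|_g \downarrow \mathcal{A}$ and replacing each $\Omega_i$ by $\Omega_i \cap B_r$ (which does not increase area, by the standard cut-and-paste estimate using mean-convexity of $S_r$), I may assume every $\Omega_i \subset \overline{B_r}$. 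The $S_i$ are integral cycles ($\partial\s = 0$, so $\partial S_i = 0$) of uniformly bounded mass supported in a fixed compact set, so the Federer--Fleming compactness theorem gives a weakly convergent subsequence with limit $S_\infty = \s + \partial\Omega_\infty$; lower semicontinuity of mass yields $|S_\infty|_g \le \mathcal{A}$, so $S_\infty$ is a minimal area enclosure.

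\emph{The outermost enclosure.} The key point is that the family of regions of minimal area enclosures is closed under finite unions: if $\Omega_1,\Omega_2$ give minimal area enclosures, submodularity of the perimeter functional
\[
|\s+\partial(\Omega_1\cup\Omega_2)|_g + |\s+\partial(\Omega_1\cap\Omega_2)|_g \le |\s+\partial\Omega_1|_g + |\s+\partial\Omega_2|_g = 2\mathcal{A},
\]
combined with the fact that each left-hand term is $\ge \mathcal{A}$, forces both to equal $\mathcal{A}$. I would then pick a sequence of minimal area enclosures whose regions $\Omega_j$ maximize $|\Omega_j|_g$ (bounded above since $\Omega_j \subset \overline{B_r}$), replace $\Omega_j$ by $\Omega_1 \cup \cdots \cup \Omega_j$ to make the sequence nested while keeping it minimal, and set $\Omega^\ast := \bigcup_j \Omega_j$, so that $\chi_{\Omega_j}\to\chi_{\Omega^\ast}$ in $L^1$ and lower semicontinuity gives $|\s+\partial\Omega^\ast|_g \le \mathcal{A}$. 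Thus $\ts := \s + \partial\Omega^\ast$ is a minimal area enclosure; maximality of $|\Omega^\ast|_g$ together with the union-closure shows every other minimal region lies in $\Omega^\ast$ up to a null set, giving property (2), and uniqueness follows since any surface satisfying (2) both encloses and is enclosed by $\ts$.

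\emph{Regularity.} On $M\setminus\s$ the current $\ts$ is locally area-minimizing, so the interior regularity theory for codimension-one area-minimizing integral currents (cf. \cite{simon}) shows $\ts\setminus\s$ is a smooth embedded minimal surface, hence has zero mean curvature. Near $\s = \partial M$ the minimization is an obstacle problem with the smooth obstacle $\partial M$ (the region cannot cross $\partial M$), and the corresponding regularity theory yields the optimal $C^{1,1}$ regularity up to and along $\s$; the variational inequality characterizing the obstacle problem forces the mean curvature of $\ts$ to be non-negative along the contact set with $\s$, and combined with interior minimality this gives $\ts$ non-negative mean curvature throughout.

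I expect the main obstacle to be the second stage: passing from the pairwise union-closure to a single maximal element requires care, because a priori there may be uncountably many minimal area enclosures. The argument hinges on (i) confining everything to the fixed compact region $\overline{B_r}$ so that $\sup|\Omega|_g$ is finite and realized in the limit, (ii) monotone $L^1$ convergence of the nested regions, and (iii) verifying that the weak limit is genuinely an enclosing surface, not merely an integral cycle of the right mass — the bookkeeping for sets of finite perimeter here, and the mean-convexity barrier estimate used to confine the minimizing sequences, are the delicate points.
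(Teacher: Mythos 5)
The paper does not prove this theorem at all: it is stated as a well-known background result and attributed to Bray \cite{bray1999proof} and Huisken--Ilmanen \cite{huiskenilmanen}, so there is no ``paper's own proof'' to compare against. Your sketch is a faithful reconstruction of the standard geometric-measure-theory argument used in those references (and repeated in \cite{jeff}): direct method with a mean-convex coordinate sphere as barrier, Federer--Fleming compactness and lower semicontinuity for existence; submodularity of perimeter (the cut-and-paste inequality $|\s+\partial(\Omega_1\cup\Omega_2)| + |\s+\partial(\Omega_1\cap\Omega_2)| \le |\s+\partial\Omega_1| + |\s+\partial\Omega_2|$) plus a volume-maximizing nested sequence for the outermost enclosure; interior regularity for codimension-one area-minimizers and obstacle-problem regularity at $\s$ for the $C^{1,1}$ / $C^\infty$ dichotomy and the sign of the mean curvature. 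Your self-identified concerns are exactly the ones the cited sources address, and the resolution you outline (compactness confines everything to $\overline{B_r}$, so $\sup|\Omega|_g < \infty$ and the monotone union $\Omega^\ast$ is a valid set of finite perimeter whose boundary defines an enclosing current) is correct. The one step worth stating more carefully is that the inequality $|\s+\partial(\Omega_1\cup\Omega_2)| \ge \mathcal{A}$ is not automatic from the definition unless one first verifies that $\s+\partial(\Omega_1\cup\Omega_2)$ is itself an enclosing surface in the admissible class, i.e.\ that $\Omega_1\cup\Omega_2$ is again an $\hm^3_g$-measurable compact set defining an integral $3$-current of multiplicity one; this is routine for sets of finite perimeter but should be said. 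Otherwise the proposal is sound and matches the argument in the sources the paper cites.
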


It was proved in \cite{jeff}, that the set $\ts \cap \s$ has zero g-measure (c.f. Proposition 36). Since our proof of Theorem \ref{theorem: main_theorem} and its supporting lemmas relies on the local geometry of the manifold near points in the intersection of $\ts$ and $\s$, it is necessary to consider the following construction of a diffeomorphism between the neighbourhood of an arbitrary point $p \in \ts \cap \s$ and the upper half space. \cite{jeff} \\

Let $p \in \ts \cap \Sigma$, where $\ts$ denotes the outermost minimal area enclosure of $\Sigma$ with respect to the metric $g' = u^{4}g$. Choosing $\rho > 0$ sufficiently small, we construct a diffeomorphism $\Phi$ between the sets $\overline{U} = \overline{B(p,\rho)}$ and $\overline{B}^{+} = \{z \geq 0 \} \cap \overline{B(0,1)} \subset \mathbb{R}^3$ such that \\

\begin{enumerate}
    \item $\Phi(p) = (0,0,0)$
    \item $\Phi(\overline{B(p, \rho)} \cap \Sigma) = \{ z=0 \}$
    \item $d\Phi_{p} : T_{p}M \xrightarrow{} \mathbb{R}^{3}$ is an isometry.
\end{enumerate} 

The third property makes the coordinate frame $\{ \partial_{x}, \partial_{y}, \partial_{z} \}$ orthonormal at $p$. We identify the coordinates on the sets $\overline{B(p, \rho)}$ and $\overline{B}^{+}$ and denote them by $(x,y,z)$.  

For $\alpha > 0$, define

\begin{equation*}
    C_{\alpha} = \{ (x,y,z) \in \overline{U}: \sqrt{x^2 + y^2} \leq \alpha z \}.
\end{equation*}

$C_{\alpha}$ defines a region in $\overline{U}$ above a cone with opening parameter $\alpha$ and vertex at $p$. The cone closes as $\alpha \rightarrow 0$ and approaches a place as $\alpha \rightarrow +\infty$. We define \textit{lower partial density} as follows.

\begin{definition}
    Let $S \subset M$ be surface and let $p \in S \cap \Sigma$. We define the the lower partial density, denoted $\Theta_{\alpha}(S,p)$ of $S$ at point $p$ as:

    \begin{equation*}
        \Theta_{\alpha}(S,p) = \lim\limits_{r \rightarrow 0^{+}} \frac{|S \llcorner (B(p,r) \cap C_{\alpha})|_g}{\pi r^2}
    \end{equation*}
    
\end{definition}

Lower partial density is a measure of density of $S$ at $p$ with contributions coming only from parts within the cone $C_{\alpha}$. For contrast, the \textit{lower density} of $S$ at $p$ is defined as the quantity:

\begin{equation*}
    \Theta(S,p) = \lim\limits_{r \rightarrow 0^{+}} \frac{|S \llcorner B(p,r)|_g}{\pi r^2}
\end{equation*}

We can also define the \textit{partial mass function} as the numerator of the partial density function:

\begin{equation} \nonumber
    m_{\alpha,r}(S,p) = |S \llcorner (B(p,r) \cap C_{\alpha})|_g 
\end{equation}


\subsection{The generalised harmonic conformal class and maximizers}

Assume that $(M,g)$ is a smooth, complete, asymptotically flat 3-manifold with non-empty compact smooth boundary $\partial M = \Sigma$. Consider the following equivalence relation: 

\begin{equation*}
    g' \equiv g \Leftrightarrow \; g' = u^4g, \, \Delta_g u = 0, \, u > 0, \, u \rightarrow 1 \text{ at infinity }, \, u \text{ is smooth }
\end{equation*}

In particular, we identify metrics related to each other by well behaved harmonic functions $u$. The classes of equivalence under this identification are called the \textit{harmonic conformal classes} of $M$ \cite{bray1999proof}, and the harmonic conformal class to which $g$ belongs will be denoted by $\mathcal{H}(g)$. Every element of $\mathcal{H}(g)$ is an asymptotically flat metric, since the harmonic function $u$ expands as \cite{bartnik_mass} 

\begin{equation*}
    u(x) = 1 + \frac{a}{|x|} + O(\frac{1}{x^2})
\end{equation*}

Hence, all elements of $\mathcal{H}(g)$ have the same asymptotic behaviour. The asymptotic normalisation ($u \rightarrow 1$) in the above definition is for convinience. Having defined the conformal class, we can further generalise the above construction to define the \textit{generalised conformal class} \cite{jeff} of the metric $g$. We do this to generalise the set $\mathcal{H}(g)$ to some larger compact set $\overline{\mathcal{H}}(g) \supset \mathcal{H}(g)$ , in order to take maximums over it (used in the definition of maximizer below). \\ 

Let $f \geq 0$ belong to class $L^4(\Sigma)$ with respect to area measure $dA_g$ induces by $g$. For $x \in \text{int }M$ define 

\begin{equation*}
    u(x) = \varphi(x) + \int\limits_{\s} K(x,y) f(y) dA_g (y)
\end{equation*}

where $\varphi(x)$ is the unique g-harmonic function that vanishes on $\s$ and tends to one  at infinity, and $K(x,y)$ is the Poisson kernel for $(M,g)$ with $x \in M, \, y \in \s, \, x \neq y$. Hence, by definition, $u$ is g-harmonic in the interior of $M$ and tends to one at infinity. We will call $u$ as the harmonic function associated to $f$. Since $f$ is determined uniquely by $u$ (under almost everywhere equivalence), we will call $f$ to be the function in $L^4 (\s)$ determined by $u$. Since $u$ need not extend smoothly onto the boundary $\s$, $u^4g$ is a smooth metric on $M \backslash \s$.

\begin{definition}
    The \textbf{generalised harmonic conformal class} of $g$ is the set $\overline{\mathcal{H}(g)}$ of all Riemannian metrics $u^4g$ on $M \backslash \s$, where $u$ is the harmonic function associated to some non-negative $f \in L^4(\s)$.
\end{definition}

Having defined the generalised harmonic conformal class, we can define the area of surfaces $S$ enclosing the boundary $\s$ with respect to $g' = u^4g \in \overline{\mathcal{H}(g)}$ as 

\begin{equation*}
    |S|_{g'} = |S|_{u^4g} = \int\limits_{S \cap \s} f^4 d\hm^2_g + \int\limits_{S \backslash \s} u^4 d\hm^2_g
\end{equation*}

For our purposes, it is necessary to define the \textit{convergence of metrics} in $\gh$.

\begin{definition}
    Let $\{ u_n^4g \}$ be a sequence of metrics in $\gh$ associated with $\{ f_n \}$ of functions in $L^4(\s)$, and let $u^4g \in \gh$ be associated to $f \in L^4(\s)$. Then the sequence $\{ u_n^4g \}$ is said to \textbf{converge weakly} to the metric $u^4g$ if $\{ f_n \}$ converges weakly to $f$ in $L^4(\Sigma)$.
\end{definition}

Preliminary results regarding the topology of $\gh$ were proved in \cite{jeff}. We state here a result that will be helpful to prove the results in section \ref{section: positive_partial_density}.

\begin{lemma} (Lemma 15 of \cite{jeff}) 
    Suppose a sequence of metrics $\{ u_n^4g \}$ in $\gh$ converges weakly to $u^4g \in \gh$. Then $u_n$ converges pointwise to $u$ in the interior of $M$.
\end{lemma}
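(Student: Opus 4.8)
The plan is to reduce the statement to the linear integral representation of harmonic functions by the Poisson kernel, and then to observe that weak $L^4$ convergence is precisely convergence when tested against $L^{4/3}$ functions. First I would write, for each $x$ in the interior of $M$,
\begin{equation*}
u_n(x) = \varphi(x) + \int\limits_{\s} K(x,y) f_n(y)\, dA_g(y), \qquad u(x) = \varphi(x) + \int\limits_{\s} K(x,y) f(y)\, dA_g(y),
\end{equation*}
where $\varphi$ is the fixed $g$-harmonic function vanishing on $\s$ and tending to $1$ at infinity (in particular independent of $n$), and $K$ is the Poisson kernel of $(M,g)$. Subtracting, the claim $u_n(x)\to u(x)$ becomes equivalent to
\begin{equation*}
\int\limits_{\s} K(x,y)\big(f_n(y) - f(y)\big)\, dA_g(y) \longrightarrow 0 \quad \text{for each fixed } x \in \operatorname{int} M.
\end{equation*}

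Next I would fix $x \in \operatorname{int} M$ and check that the function $y \mapsto K(x,y)$ belongs to $L^{4/3}(\s, dA_g)$, the dual space of $L^{4}(\s, dA_g)$. This is the only analytic input: since $\s$ is smooth and compact and $x$ is a fixed interior point, $x$ is bounded away from $\s$, and classical potential theory for the asymptotically flat manifold $(M,g)$ gives that $K(x,\cdot)$ is smooth and bounded on the compact set $\s$ (one may also view $K(x,y)$ as an outward normal derivative in $y$ of the Green's function $G(x,\cdot)$, which is smooth in $y$ away from $x$). In particular $K(x,\cdot) \in L^{4/3}(\s)$.

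Finally, by the definition of weak convergence of metrics in $\gh$ we have $f_n \rightharpoonup f$ weakly in $L^4(\s)$, so pairing the sequence against the fixed functional $K(x,\cdot) \in L^{4/3}(\s)$ yields $\int_{\s} K(x,y) f_n(y)\, dA_g(y) \to \int_{\s} K(x,y) f(y)\, dA_g(y)$, i.e. $u_n(x) \to u(x)$. Since $x \in \operatorname{int} M$ was arbitrary, $u_n \to u$ pointwise on the interior of $M$, which is the assertion.

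The only step that needs care is the second one — confirming that $K(x,\cdot)$ is a legitimate test function, i.e.\ that it lies in $L^{4/3}(\s)$. The conceivable worry is the global behaviour of the Poisson kernel on the non-compact manifold, but restricting to a single fixed interior point $x$ and to the compact boundary $\s$ removes any singularity or decay issue, so only boundedness of $K(x,\cdot)$ on $\s$ is required, and this is standard. Everything else is immediate from the definitions of the generalised harmonic conformal class and of weak convergence therein.
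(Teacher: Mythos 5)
Your proposal is correct, and it is the natural and essentially unique argument for this statement given the definitions at hand: write $u_n$ and $u$ via the Poisson kernel representation, observe that for fixed interior $x$ the function $K(x,\cdot)$ is smooth and bounded on the compact boundary $\Sigma$ (hence lies in $L^{4/3}(\Sigma)$, the dual of $L^4(\Sigma)$), and conclude by the definition of weak $L^4$ convergence of $\{f_n\}$ to $f$. This is the same route taken in the source the paper cites for this lemma, so there is nothing to compare or correct.
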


The $g$-area of a minimal area enclosure (if it exists) of $\Sigma$ with respect to $g$ is equal to the \textit{minimal enclosing area} of $\s$ with respect to $g$. More formally,

\begin{definition}
    For $g' \in \gh$, the \textbf{minimal enclosing area} of $\s$ with respect to $g'$, denoted min$(\s,g')$, is

    \begin{equation*}
        \text{min}(\s, g') = \inf\limits_{g'}\{ |S|_{g'} : \text{ $S$ encloses $\s$} \}.
    \end{equation*}
    
\end{definition}

\textit{Minimal area enclosure} and \textit{outermost minimal area enclosure} of $\s$ are defined as in the last section. At last, we can define the \textit{maximizer}.

\begin{definition}
    For $A>0$, we define 

    \begin{equation*}
        \alpha(A) = \sup\limits_{g' \in \gh} \{ \text{min}(\s,g'): |\s|_{g'} \leq A \}.
    \end{equation*}
    The metric $g'$ attaining the supremum in the above definition is called the \textbf{maximizer} for $\alpha(A)$
\end{definition}

The proof of the existence of the maximizer $g'$ above was presented in \cite{jeff} (Theorem 32).

\subsection{Convergence of surfaces and ZAS} \label{section: convergence and ZAS}

For other purposes and for defining zero area singularities (ZAS), we need to define the convergence of surfaces. Preliminary definition of convergence of surfaces, viewed as currents in $M$, was presented in section \ref{section: surfaces and mae}. In this section, we present a definition based on convergence of area measures.

\begin{definition}
    A sequence of surfaces $\{S_i = \s + \Omega_i\}$ is said to \textbf{converge} to a surface $S = \s + \Omega$ if the sequence $\{ \Omega_i - \Omega \}$ converges to zero in mass measure:

    \begin{equation*}
        |\Omega_i - \Omega|_g \rightarrow 0 
    \end{equation*}
    
\end{definition}

Stronger $C^k$ modes of convergence of $C^k$-surfaces were presented in \cite{Bray:2009voj}, mass convergence is implied by these $C^k$ convergence modes. This approach considers a convergence of surfaces to a surface $\s^{0} \subset M$ as a convergence of ``graph" of surfaces near $\s^{0}$ as follows: let $U \subset M$ be a neighbourhood of $\s^{0}$ diffeomorphic to $\s^{0} \times [0,a)$ for some $a > 0$. Let $(x,s)$ be coordinates on $U$ with $x \in \s^{0}$ and $s \in [0,a)$. We say that a surface $S \subset U$ is a \textit{graph} over $\s^{0}$ if it can be parametrised as $s = s(x)$ for $x \in \s^{0}$. Then the $C^k$ \textit{convergence} of surfaces in $U$ to $\s^{0}$ is defined as follows:

\begin{definition}
    A sequence of $C^{k}$ surfaces $\{ S_i \}$ that are graphs over $\s^{0}$ is said to \textbf{converge} to $\s^{0}$ in $C^k$ if the associated coordinate functions (see above) $s_n: \s^{0} \rightarrow [0,a)$ converges to zero in $C^k$.
\end{definition}

In \cite{Bray:2009voj}, authors have considered the surfaces to be $C^{\infty}$, although the definition stated above is equivalent. We can now define \textit{zero area singularities} as follows.

\begin{definition}
    Let $(M,g)$ be a 3-manifold with smooth, compact, nonempty boundary $\s$. Assume that $g$ is smooth on $M \backslash \s$. A connected component $\s^{0}$ of $\s$ is called a \textbf{zero area singularity (ZAS)} of $g$ if for every sequence of smooth surfaces $\{ S_i \}$ properly enclosing $\s^{0}$ and converging in $C^{1}$ to $\s^{0}$, the sequence of areas $\{ |S_n|_g \}$ converges to zero. 
\end{definition}

\section{Heuristic argument of the proof} \label{section: heuristic_proof}

Our proof of theorem \ref{theorem: main_theorem} can be conveniently split into two parts. It is a metric-variation argument over all surfaces $S$ enclosing the boundary component $\s$. Our proof is same as of Theorem 29 in \cite{jeff}, except that we ``kick out" the collection of surfaces enclosing $\s$ with \textit{zero partial density} from the main variational argument. We justify this removal of surfaces with zero lower partial density by proving that the outermost minimal area enclosure of $\s$ cannot have zero lower partial density. This is achieved in two steps. Firstly, let $p \in \ts \cap S$, then, in section \ref{lemma: zeropardensity_mgeq0} we prove that the lower partial density $\Theta_{\alpha}(S,p)$ of a surface $S$ is positive under the assumption that its numerator (i.e., the \textit{partial mass function}) is not trivially zero. The argument for proving this is based on Lemma 25 of \cite{jeff}, with introduction of new techniques for defining \textit{partial slices of surfaces}, and their properties thereby. Secondly, in section \ref{lemma: zeropardensity_meq0} we prove that the partial mass function $m_{\alpha,r}(\ts,p)$ (defined as the numerator of $\Theta_{\alpha}(\ts,p)$) of an outermost minimal area enclosure $\ts$ of $\s$ cannot be trivially zero by using a coarea argument. These two arguments imply that that lower partial density of an outermost minimal area enclosure $\ts$ of $\s$ is positive, and hence we ``remove'' all surfaces with zero partial density from the variational argument in our proof of the main theorem. A more rigorous statement is provided in the proof of the main theorem.\\ 

This leaves us with a variational argument consisting of enclosures of $\s$ with positive lower partial density: $\Theta_{\alpha}(S,p) > 0$. The proof of this final variational argument depends upon proving the positivity of a quantity containing integrals uniformly over all surfaces $S$ with positive lower partial density $\Theta_{\alpha}(S,p)$. This requires the use of a uniform version of Fatou's lemma. We prove this modified version of Fatou's lemma in section \ref{section: positive_partial_density} using results obtained by the authors in \cite{feinberg2015uniform}. By the weak convergence of mass measures as proved in \cite{jeff}, and the modified version of the uniform Fatou's lemma, the positivity of the aforesaid quantity follows. This completes our proof of the main theorem.\\ 

At last, let us discuss the metric-variation argument of the main theorem more explicitly. The idea was presented originally in Theorem 29 of \cite{jeff}. We start with a specially chosen path $g_t = u_t^4g$ of metrics in $\gh$, such that the area of all minimal area enclosures of $\s$ with positive lower partial density increases at a uniformly positive rate, and the area of $\s$ is unchanged up to first order in variation along the path. We can further normalise this path to a path $\overline{g_t}$ in $\gh$, for which the minimal enclosing area increases strictly for sufficiently small $t>0$. This contradicts the assumption that $\overline{g_o} = g'$ is a maximizer for $\alpha(S)$.

\section{Proof of the main theorem} \label{section: proof}

\subsection{Part 1: Positive lower partial density} \label{section: positive_partial_density}

In this section we prove the following generalisation of Proposition 40 from \cite{jeff} for the case of minimal area enclosures with positive lower partial density, which will be used to prove theorem \ref{theorem: main_theorem} in section \ref{section: proof}.

\begin{proposition} \label{prop:Prop_1}
    Suppose $\alpha ( A )  < A$, and let $g' = u^{4}g$ be a maximizer for $\alpha ( A )$. If $\{S_i\}$ is family of a minimal area enclosures of $\Sigma$ with respect to $g'$ such that the lower partial density $\Theta_{\alpha}(S_i,p)$ is positive for all $r \in (0,\rho_i]$ (where $\rho_i > 0$ is chosen to be sufficiently small) and for some $\alpha_i > 0$ for each $i$, then: 
    \begin{equation} \nonumber
        \liminf\limits_{\sigma \xrightarrow{} 0^{+}} \left( \inf\limits_{S} \left(  \int\limits_{S\backslash\Sigma} u^{3}w_{\sigma}d\hm^{2}_{g} \right) \right) \geq 1
    \end{equation}
    Moreover, there exists $\sigma > 0$ such that the following holds:
    \begin{equation} \label{eqn: prop1_ineq}
         \int\limits_{S\backslash\Sigma} u^{3}w_{\sigma}d\hm^{2}_{g} - \sqrt{\frac{\alpha(A)}{A}} > 0
    \end{equation}
    uniformly for all minimal area enclosures $S \in \{S_i\}$.
\end{proposition}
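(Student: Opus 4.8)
The plan is to reduce the statement to an application of a uniform Fatou-type lemma, using the pointwise convergence of harmonic functions (Lemma 15) together with the positivity of lower partial density established in Part 1. I would set up a sequence of test functions $w_\sigma$ supported near $\Sigma$ — concretely the ``slab'' weights that interpolate between $\Sigma$ and its nearby level sets — normalized so that, for the unperturbed geometry, $\int_{\widehat\Sigma} u^3 w_\sigma\, d\hm^2_g \to 1$ as $\sigma \to 0^+$. The first task is to verify this normalization on the outermost minimal area enclosure itself: here one uses that $\ts \cap \Sigma$ has zero $g$-measure (Proposition 36 of \cite{jeff}), so that the mass of $\ts$ concentrated near $\Sigma$ is governed entirely by the ``graph part'' $\ts \backslash \Sigma$, and a coarea/first-variation computation gives the limit $1$.

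The heart of the argument is the passage from the single outermost enclosure to a uniform bound over the whole family $\{S_i\}$ of minimal area enclosures with positive lower partial density. For each fixed $i$ one has $\int_{S_i \backslash \Sigma} u^3 w_\sigma\, d\hm^2_g \to \Theta(S_i, \cdot)$-type quantity $\geq 1$ by the single-surface estimate (this is where the positivity of $\Theta_\alpha(S_i,p)$ enters: it prevents the mass of $S_i$ near $p$ from escaping entirely into the boundary $\Sigma$, so the graph part retains enough mass). I would then argue that the convergence is uniform in $i$: the obstacle is that a priori the rate at which $\int_{S_i \backslash \Sigma} u^3 w_\sigma$ approaches its limit could degenerate as $i \to \infty$. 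To handle this I would invoke the uniform version of Fatou's lemma (the modified lemma promised in section \ref{section: positive_partial_density}, built on \cite{feinberg2015uniform}): since all $S_i$ are minimal area enclosures they have the same $g'$-area, and by the compactness/weak-convergence results of \cite{jeff} any sequence drawn from $\{S_i\}$ subconverges to a minimal area enclosure $S_\infty$, which again has positive lower partial density by Part 1; the uniform Fatou lemma then yields
\begin{equation} \nonumber
    \liminf_{\sigma \to 0^+}\ \inf_{S \in \{S_i\}} \int_{S \backslash \Sigma} u^3 w_\sigma\, d\hm^2_g \geq 1.
\end{equation}

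For the second, explicit, conclusion I would combine this liminf with the hypothesis $\alpha(A) < A$, which gives $\sqrt{\alpha(A)/A} < 1$. Pick $\epsilon > 0$ with $\sqrt{\alpha(A)/A} < 1 - \epsilon$. By the liminf statement there is $\sigma_0 > 0$ such that for all $0 < \sigma \leq \sigma_0$, $\inf_{S \in \{S_i\}} \int_{S \backslash \Sigma} u^3 w_\sigma\, d\hm^2_g > 1 - \epsilon > \sqrt{\alpha(A)/A}$, which is exactly inequality \eqref{eqn: prop1_ineq}, holding uniformly over the family. The main obstacle, as noted, is the uniformity of the $\sigma \to 0$ limit over the infinite family $\{S_i\}$: controlling this requires the uniform integrability input, i.e. that the measures $u^3 \hm^2_g \llcorner (S_i \backslash \Sigma)$ restricted to shrinking neighborhoods of $\Sigma$ do not lose mass in the limit, which is precisely what positive lower partial density, combined with the uniform Fatou lemma, is designed to guarantee. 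A secondary technical point is ensuring the test functions $w_\sigma$ can be chosen independently of $i$ — this is possible because they depend only on the fixed background geometry $(M,g)$ and the fixed maximizer $u$, not on the particular enclosure $S$.
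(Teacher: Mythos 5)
Your high-level plan — reduce to a uniform Fatou-type lemma, using the positivity of lower partial density to control the mass of $S_i$ near $p$ — matches the paper's strategy, and your treatment of the second conclusion (pick $\epsilon$ with $\sqrt{\alpha(A)/A}<1-\epsilon$ and take $\sigma$ small) is exactly how the paper extracts inequality \eqref{eqn: prop1_ineq} from the liminf bound. However, there is a genuine gap in your argument for the liminf bound itself, and you also diverge from the paper's mechanism in a way that leaves a hole.

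The missing idea is the role of the Poisson kernel. The paper does not try to show that $\int_{S\setminus\Sigma} u^3 w_\sigma\,d\hm^2_g$ converges to a finite number near $1$; instead it (i) dominates $v_\sigma\le u^3 w_\sigma$, where $v_\sigma$ is the $g$-harmonic function equal to $h_\sigma$ on $\Sigma$ and vanishing at infinity, (ii) applies the modified uniform Fatou lemma over the $\sigma$-algebra $\mathbb{X}=\{S_i\setminus\Sigma\}$ with $f_n=v_\sigma$ and $f=K(\cdot,p)$, which by Corollary~\ref{corollary: corr1} is a uniform limit, to obtain
\begin{equation} \nonumber
\liminf_{\sigma\to 0^+}\ \inf_{S\in\{S_i\}} \left( \int_{S\setminus\Sigma} v_\sigma\,d\hm^2_g - \int_{S\setminus\Sigma} K(\cdot,p)\,d\hm^2_g \right) \ge 0,
\end{equation}
and then (iii) invokes the fact — Lemma 37 of \cite{jeff} — that for any surface with positive lower partial density at $p$, the integral $\int_{S\setminus\Sigma} K(\cdot,p)\,d\hm^2_g$ diverges. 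It is precisely this divergence, coming from the $|x-p|^{-3}$ blow-up of the Poisson kernel integrated over a cone in which the surface carries positive density, that forces the liminf to be $+\infty$ (hence trivially $\ge 1$). Your proposal never invokes this divergence; your appeal to a ``coarea/first-variation computation giving the limit $1$'' is not justified and is not what actually happens — the relevant integral blows up rather than converging to $1$. Without the Poisson kernel input, the uniform Fatou lemma gives you only a comparison $\int v_\sigma \gtrsim \int K$, which is vacuous unless you know the size of $\int K$.

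A secondary issue: your proposed uniformity mechanism (subconvergence of $\{S_i\}$ to some $S_\infty$) is not what the paper uses and does not obviously close the gap. The paper's uniformity comes from verifying that $\{S_i\setminus\Sigma\}$ is a $\sigma$-algebra (Lemma~\ref{lemma:sigma_algebra}) and applying Theorem~\ref{theorem: modified_fatou} directly over that $\sigma$-algebra; no compactness or subconvergence of surfaces is involved. Your argument would additionally need to explain why weak subconvergence of currents controls the integrals $\int_{S_i\setminus\Sigma} u^3 w_\sigma$ uniformly in $i$ and $\sigma$, and why $S_\infty$ inherits positive lower partial density; neither point is addressed, and the first is exactly the difficulty the $\sigma$-algebra route is designed to circumvent.
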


The generalisation lies in the fact that we prove the above proposition uniformly over all surfaces $S_i$, allowing us to keep the second quantity positive even if the count of distinct surfaces $S_i$ is infinite. Our motivation comes from\cite{song}, in which the author proved the existence of infinitely many minimal embedded surfaces in closed 3-manifolds. \\

For our proof of Proposition \ref{prop:Prop_1}, it is necessary to define a function $h_{\sigma}$ on $\Sigma$ as follows. \cite{jeff}

\begin{equation}
    h_{\sigma} = \frac{1}{\pi \sigma^{2}}\chi_{B(p,\sigma)},
\end{equation}

where $\chi$ is the characteristic function. As is evident,

\begin{equation}
    \lim\limits_{\sigma \xrightarrow{} 0^{+}}\int_{\Sigma}h_{\sigma}d\hm^{2} = 1
\end{equation}

Therefore for sufficiently small $\sigma > 0$, we have 

\begin{equation} \label{eqn3}
   \int_{\Sigma}h_{\sigma}d\hm^{2} \leq \sqrt{\frac{A}{\alpha(A)}}.
\end{equation}

Define $k_{\sigma} = \frac{\int_{\Sigma}h_{\sigma}d\hm^{2}}{A}$ and for $t \geq 0$. Let

\begin{equation}
    f_t = f + t\left( \frac{h_{\sigma}}{f^3} - k_{\sigma}f \right).
\end{equation}

Since $f \geq \epsilon$, for some $\epsilon > 0$, by Proposition 33 of \cite{jeff}, we obtain the boundedness of $1 /f^3$.

From \eqref{eqn3}, we have for sufficiently small $\sigma$ 

\begin{equation}
\label{eqn5}
    k_{\sigma} \leq \frac{1}{A} \sqrt{\frac{A}{\alpha (A)}}
\end{equation}

We define $u_t$ to be the harmonic function associated to $f_t$:

\begin{equation}
    u_t = u + t (w_{\sigma} - k_{\sigma}(u-\varphi))
\end{equation}

where $w_{\sigma}$ is the harmonic function tending to 0 at infinity and given by $h_{\sigma}/f^3$ on $\Sigma$. By the choice of $k_{\sigma}$, which depends on the minimal enclosure $S$, the area of $\Sigma$ is unchanged to first order at 0.\\

For any minimal area enclosure $S_i$ of $\Sigma$, using \eqref{eqn5}, $|S|_{g'} = \alpha(A)$ and $k_{\sigma}u^3\varphi > 0$, we have that for sufficiently small $\sigma$ - 

\begin{gather*} 
\label{eqn7}
    \frac{1}{4}\frac{d}{dt}|_{t=0} |S_i|_{g_t} = \int_{S_i \cap \Sigma} f^3 \left( \frac{h_{\sigma}}{f^3} - k_{\sigma}f \right)d\hm^{2}_{g} + \int_{S_i \backslash \Sigma} u^3 \left( w_{\sigma} - k_{\sigma}u + k_{\sigma}\varphi \right)d\hm^2_g \\ 
    = \int\limits_{S_i \cap \Sigma} h_{\sigma}d\hm^2_g + \int\limits_{S_i \backslash \Sigma} u^3 w_{\sigma}d\hm^2_g \\ 
    - k_{\sigma} \left( \int\limits_{S_i \cap \Sigma} f^4 d\hm^2_g + \int\limits_{S_i \backslash \Sigma} u^4 d\hm^2_g \right) + k_{\sigma} \int\limits_{S_i \backslash \Sigma} u^3 \varphi d\hm^2_g  \\
    > \int_{S_i \cap \Sigma} h_{\sigma}d\hm^2_g + \int_{S_i \backslash \Sigma} u^3w_{\sigma}d\hm^2_g - \sqrt{\frac{\alpha (A) }{A}} \numberthis 
\end{gather*}

We prove by Proposition \ref{prop:Prop_1} that the $\sigma$ in the last equality can be chosen small enough so that \eqref{eqn: prop1_ineq} is positive for all surfaces uniformly. \\

\begin{definition}
    Total variation (TV) of measures $\mu$ and $\nu$, uniformly over a collection of minimal area enclosures $\{ S_i \}$ as  - 
    \begin{equation} \nonumber
        \nonumber
        ||\mu - \nu||_{TV} = \sup\limits_{S \in \{S_i\}} \left( \int_{S} f(s)  d\mu - \int_{S} f(s) d\nu | f(s): \mathbb{S} \rightarrow [-1,1] \text{ is measurable } \right)
    \end{equation}
\end{definition}

To obtain the aforesaid uniformly positive estimate in \eqref{eqn: prop1_ineq}, we need to use a uniform version of the classical Fatou's lemma from \cite{feinberg2015uniform}.

\begin{theorem} \label{theorem: feinberg}
    Let $(\mathbb{S}, \Sigma)$ be a measurable space. Let $M(\mathbb{S})$ be set of finite measures on $\mathbb{S}$. Let $\{ \mu^{(n)}\}_{n=1,2,...} \subset M(S)$ converge in total variation to a measure $\mu$ on $\mathbb{S}$, $f \in L^1(\mathbb{S},\mu)$, and $f_n \in L^1(\mathbb{S},\mu^{(n)})$ for each $n=1,2,...$. Then the inequality 

    \begin{equation} \nonumber 
        \liminf\limits_{n \rightarrow \infty}\inf\limits_{S \in \Sigma} \bigg( \int_S f_n(s) \mu^{(n)}(ds) - \int_S f(s) \mu(ds) \bigg) \geq 0
    \end{equation}

    holds if and only if the following two statements hold:

    \begin{enumerate}
        \item for each $\varepsilon > 0$
        \begin{equation} \nonumber 
            \mu(\{ s \in \mathbb{S}: f^n(s) < f(s) - \varepsilon \}) \rightarrow 0 \text{ as } n \rightarrow \infty
        \end{equation}
        \item
        \begin{equation} \nonumber 
            \liminf\limits_{K \rightarrow +\infty} \inf\limits_{n=1,2,...} \int_s f^n(s) \boldsymbol{I}\{ s \in \mathbb{S}: f^n(s) \leq -K \} \mu^n(ds) \geq 0
        \end{equation} 
    \end{enumerate}
    
\end{theorem}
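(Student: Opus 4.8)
The plan is to prove the two implications separately, the substance being the \emph{sufficiency} direction: that conditions (1) and (2) force the $\liminf$--$\inf$ inequality. (The statement is the uniform Fatou lemma of \cite{feinberg2015uniform}; I indicate a self-contained argument.) The idea is to reduce to uniformly bounded integrands, for which total-variation convergence $\mu^{(n)}\to\mu$ is all that is needed. First I would truncate from below: for $K>0$ write $\int_S f_n\,d\mu^{(n)} = \int_S \max(f_n,-K)\,d\mu^{(n)} + \int_{S\cap\{f_n<-K\}}(f_n+K)\,d\mu^{(n)}$, note the error term is non-positive and bounded below by $r_n^K:=\int_{\mathbb{S}} f_n\,\mathbf{1}\{f_n\le -K\}\,d\mu^{(n)}$, and invoke (2) to get $\inf_n r_n^K \ge -\gamma(K)$ with $\gamma(K)\to 0$. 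Next I would truncate from above at level $K$, replacing $\max(f_n,-K)$ by $T_Kf_n:=\max(-K,\min(f_n,K))$; since $T_Kf_n\le\max(f_n,-K)$ this only lowers the integral, while on the $f$--side $\int_S\max(f,-K)\,d\mu-\int_S T_Kf\,d\mu\le\xi(K):=\int_{\{f>K\}}f\,d\mu\to 0$ by $f\in L^1(\mu)$. The crucial observation is that $\{T_Kf_n< T_Kf-\varepsilon\}\subseteq\{f_n<f-\varepsilon\}$, so condition (1) transfers verbatim to the truncated pair. It then remains to treat bounded $f_n,f\in[-K,K]$: writing $\int_S T_Kf_n\,d\mu^{(n)}-\int_S T_Kf\,d\mu = \int_S T_Kf_n\,d(\mu^{(n)}-\mu) + \int_S(T_Kf_n-T_Kf)\,d\mu$, the first term is $O(K\|\mu^{(n)}-\mu\|_{TV})$ uniformly in $S$ and the second is $\ge -\varepsilon\,\mu(\mathbb{S})-2K\,\mu(\{f_n<f-\varepsilon\})$, so (1) followed by $\varepsilon\to 0$ gives $\liminf_n\inf_S(\cdot)\ge 0$. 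Assembling the three estimates yields $\liminf_n\inf_S(\int_S f_n\,d\mu^{(n)}-\int_S f\,d\mu)\ge -\xi(K)-\gamma(K)$ for every $K$, and letting $K\to\infty$ finishes sufficiency.

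For \emph{necessity} I would argue by contraposition, exhibiting test sets $S$ on which $\int_S f_n\,d\mu^{(n)}-\int_S f\,d\mu$ stays uniformly negative. If (1) fails, there are $\varepsilon,\delta>0$ and a subsequence with $\mu(\{f_n<f-\varepsilon\})\ge\delta$; testing on $S_n=\{f_n<f-\varepsilon\}\cap\{|f_n|\le M\}\cap\{|f|\le M\}$, where the integrands are bounded so the measure mismatch is $O(\|\mu^{(n)}-\mu\|_{TV})$, yields $\int_{S_n}f_n\,d\mu^{(n)}-\int_{S_n}f\,d\mu\le -\varepsilon\,\mu(S_n)+o(1)$; since $f\in L^1(\mu)$ one checks $\mu(S_n)$ stays bounded below for $M$ large, unless the $f_n$ escape to $-\infty$ on a set of positive $\mu$--measure, in which case a direct test on $\{f_n<-M\}\cap\{-M\le f\le M'\}$ (with $M\to\infty$ and $M'$ fixed so that $\mu(\{f>M'\})$ is small) drives the quantity to $-\infty$. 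If (2) fails, pick $K_j,n_j\to\infty$ with $r_{n_j}^{K_j}\le -c<0$ and test on $S_j=\{f_{n_j}\le -K_j\}$: then $\int_{S_j}f_{n_j}\,d\mu^{(n_j)}-\int_{S_j}f\,d\mu = r_{n_j}^{K_j}-\int_{S_j}f\,d\mu$, and either $r_{n_j}^{K_j}\to-\infty$, giving a contradiction at once since $|\int_{S_j}f\,d\mu|\le\|f\|_{L^1(\mu)}$, or $r_{n_j}^{K_j}$ stays bounded, whence $\mu^{(n_j)}(S_j)\le K_j^{-1}|r_{n_j}^{K_j}|\to 0$, so $\mu(S_j)\to 0$, $\int_{S_j}f\,d\mu\to 0$, and the quantity is $\le -c+o(1)$. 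Either way $\liminf_n\inf_S(\cdot)<0$, contradicting the hypothesis.

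The main obstacle throughout is the mismatch of base measures: total-variation convergence $\mu^{(n)}\to\mu$ does \emph{not} imply $\int g\,d\mu^{(n)}\to\int g\,d\mu$ for unbounded $g\in L^1(\mu)$ (a unit mass slipping out toward a point where $g$ is large already breaks this), so one cannot simply transport $f_n$ and $f$ onto a common measure. Consequently the truncation levels must be coordinated with (1) and (2), and the limits taken in the order $n\to\infty$, then $K\to\infty$, then $\varepsilon\to 0$; the delicate verifications are precisely that the two leftover tails are absorbed by exactly these two conditions (sufficiency) and that the test sets keep enough $\mu$--mass while lying where the integrands are controlled (necessity). The rest is routine.
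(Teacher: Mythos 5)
Your argument is essentially correct, but note that the paper does not actually prove this statement: Theorem \ref{theorem: feinberg} is quoted verbatim from \cite{feinberg2015uniform} and used as a black box (the paper only proves the simplified Theorem \ref{theorem: modified_fatou}, and even there it defers to Lemma 2.5 of that reference). So there is no internal proof to compare against; the relevant comparison is with the source, and your sufficiency argument reconstructs its mechanism faithfully: the three-way split (lower tail below $-K$ absorbed by condition (2), bounded middle part handled by condition (1) together with total-variation convergence, upper tail of $f$ absorbed by $f\in L^1(\mu)$) is exactly the $I_1+I_2+I_3$ decomposition of \cite{feinberg2015uniform}, repackaged through the truncation operators $T_K$. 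The order of limits ($n\to\infty$, then $\varepsilon\to0$, then $K\to\infty$) is correct, and the inclusion $\{T_Kf_n<T_Kf-\varepsilon\}\subseteq\{f_n<f-\varepsilon\}$ (monotonicity and $1$-Lipschitzness of $T_K$) is the right observation for transferring (1) to the truncated pair. The necessity direction is sound in outline, but two spots need tightening when written out. First, in the necessity of (1), state the dichotomy for each fixed $M$: either $\mu(S_n)\ge\delta/2$ infinitely often for some $M$ (so the first test set works), or for every $M$ one has $\mu(\{f_n<-M\}\cap\{|f|\le M\})\ge\delta/4$ eventually — using $\mu(\{|f|>M\})<\delta/4$ and the fact that $f_n<f-\varepsilon\le M$ on $\{|f|\le M\}$, so only the escape $f_n<-M$ can destroy mass — and then the second test forces $\liminf_n\inf_S(\cdot)=-\infty$ because $M$ is arbitrary. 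Second, in the necessity of (2), justify that the near-minimizing indices $n_j$ can be taken to infinity: for each fixed $n$ one has $r_n^K\to0$ as $K\to\infty$ since $f_n\in L^1(\mu^{(n)})$, so indices achieving $r_n^K\le-c/2$ for large $K$ must leave every finite set; this is what licenses the step $\mu(S_j)\le\mu^{(n_j)}(S_j)+\|\mu^{(n_j)}-\mu\|_{TV}\to0$. With these details filled in, your proof is complete and matches the cited source's strategy.
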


We remark that the above theorem is true with the assumption that $\lim\limits_{n \rightarrow \infty} f_n = f$ uniformly, and that $f_n$ and $f$ are non-negative functions for each $n=1,2,3,...$ while relaxing the condition on the integrability of functions $f$ and $f_n$. This results in a more simpler uniformity in the classical Fatou's lemma, which is sufficient for our purposes. 

\begin{theorem} \label{theorem: modified_fatou} (Modified Uniform Fatou's Lemma) 
    Let $(\mathbb{S}, \Sigma)$ be a measurable space. Let $\mu$ be a finite measure on $\mathbb{S}$. Let $\lim\limits_{n \rightarrow \infty}f_n(x) = f(x)$ uniformly. Then,

    \begin{equation} \label{eqn: uniform_fatou}
        \liminf\limits_{n \rightarrow \infty}\inf\limits_{S \in \Sigma} \bigg( \int_S f_n(s) \mu(ds) - \int_S f(s) \mu(ds) \bigg) \geq 0 \\
    \end{equation}
\end{theorem}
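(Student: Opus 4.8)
The plan is to reduce the statement to the elementary observation that uniform convergence together with finiteness of $\mu$ yields a uniform bound on the error committed in each integral. Concretely, set $\varepsilon_n = \sup_{x \in \mathbb{S}} |f_n(x) - f(x)|$; by hypothesis $\varepsilon_n \to 0$ as $n \to \infty$. For every $n$ one then has the pointwise inequality $f_n(x) \geq f(x) - \varepsilon_n$ on all of $\mathbb{S}$, so that for any measurable set $S \in \Sigma$,
\begin{equation*}
    \int_S f_n(s)\,\mu(ds) - \int_S f(s)\,\mu(ds) = \int_S \big(f_n(s) - f(s)\big)\,\mu(ds) \geq -\varepsilon_n\,\mu(S) \geq -\varepsilon_n\,\mu(\mathbb{S}).
\end{equation*}
The last step uses $\mu(S) \leq \mu(\mathbb{S}) < \infty$, which is precisely where finiteness of $\mu$ enters and what makes the estimate \emph{uniform} in $S$.

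Taking the infimum over $S \in \Sigma$ on the left preserves the inequality, since the right-hand side is independent of $S$; hence $\inf_{S \in \Sigma}\big(\int_S f_n\,d\mu - \int_S f\,d\mu\big) \geq -\varepsilon_n\,\mu(\mathbb{S})$ for every $n$. Letting $n \to \infty$ and using $\varepsilon_n \to 0$ with $\mu(\mathbb{S}) < \infty$ gives $\liminf_{n\to\infty}\inf_{S\in\Sigma}(\cdots) \geq 0$, which is exactly \eqref{eqn: uniform_fatou}.

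Alternatively, the statement can be deduced from Theorem \ref{theorem: feinberg} by taking $\mu^{(n)} = \mu$ for all $n$ (which converges in total variation to $\mu$ trivially) and verifying the two conditions there: condition (1) holds because once $n$ is large enough that $\varepsilon_n < \varepsilon$ the set $\{s : f_n(s) < f(s) - \varepsilon\}$ is empty, and condition (2) is immediate under the additional non-negativity assumption recorded in the remark, since then $f_n(s)\,\boldsymbol{I}\{f_n(s) \leq -K\} \equiv 0$ for $K > 0$. I expect no genuine obstacle; the only point needing a little care is to ensure all integrals appearing are well defined (finite, or consistently $+\infty$) under whatever integrability hypothesis one adopts — in the intended application the integrands are bounded non-negative functions on a finite measure space, so this is automatic. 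I would present the direct argument above as the proof and mention the reduction to Theorem \ref{theorem: feinberg} as a remark.
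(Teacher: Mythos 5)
Your proof is correct, and the primary argument you give is genuinely different from — and considerably more elementary than — the paper's proof. The paper imports the framework of Lemma~2.5 from \cite{feinberg2015uniform}: it reduces \eqref{eqn: uniform_fatou} to $\liminf_{K}\liminf_{n} I(n,K) \geq 0$, splits $I(n,K)$ into $I_1 + I_2 + I_3$, notes $I_1 = 0$ because $\mu^{(n)} \equiv \mu$, bounds $I_3 \geq 0$, and bounds $I_2$ from below by a three-term expression that tends to zero under uniform convergence. You bypass all of this: setting $\varepsilon_n = \sup_x |f_n(x) - f(x)|$, the pointwise bound $f_n \geq f - \varepsilon_n$ gives $\int_S (f_n - f)\,d\mu \geq -\varepsilon_n \mu(\mathbb{S})$ with a right-hand side independent of $S$, and the result follows at once. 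This is cleaner and self-contained; it also does not need the non-negativity of $f_n$ that the paper's surrounding remark invokes (non-negativity is only used in the paper to verify condition~(2) of Theorem~\ref{theorem: feinberg}, and in your direct route that verification never arises). What the paper's route buys is visibility of the connection to the general Feinberg--Kasyanov--Zgurovsky machinery, which would matter if one later needed the measures $\mu^{(n)}$ to vary with $n$; for the fixed-measure case at hand it is overkill. Your secondary observation — reducing to Theorem~\ref{theorem: feinberg} with $\mu^{(n)} = \mu$ by checking that condition~(1) holds vacuously for large $n$ and condition~(2) is trivial under non-negativity — is essentially the paper's approach stated more crisply, and works as you say. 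Presenting the direct argument as the proof with the reduction as a remark, as you propose, is the better exposition. The one caveat you correctly flag is that the difference $\int_S f_n\,d\mu - \int_S f\,d\mu$ must be well defined to be rewritten as $\int_S (f_n - f)\,d\mu$; in the application the integrands are bounded on a finite measure space, so there is no issue.
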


\begin{proof}

    Since $\lim\limits_{n \rightarrow \infty}f_n = f$,
    
    \begin{equation*}
        \forall \; \varepsilon > 0 \exists N \in \mathbb{N}: \forall n \geq N \; \sup\limits_{x \in \mathbb{S}}|f_n(x) - f(x)| < \varepsilon.
    \end{equation*}

    The proof them immediately follows from Lemma 2.5 of \cite{feinberg2015uniform}. Here, we present a condensed version of the argument. \\ 

    Following the construction at the beginning of the aforesaid lemma in \cite{feinberg2015uniform}, it can be shown that inequality \ref{eqn: uniform_fatou} is equivalent to the following condition:
    
    \begin{equation*}
        \liminf\limits_{K \rightarrow +\infty} \liminf\limits_{n \rightarrow \infty} I(n,K) \geq 0
    \end{equation*}

    where $I(n,K) := \inf\limits_{S \in \Sigma} \bigg( \int\limits_{S_{f^{(n)} > -K}} \Big( f^{(n)}(s) - f(s) \Big) d\mu(s) \bigg) $

    Following \cite{feinberg2015uniform}, the above integral $I$ can be split into three parts:

    \begin{equation*}
        I(n,K) = I_1 (n,K) + I_2 (n,K) + I_3 (n,K)
    \end{equation*}

    where, in our case ($\mu^{n} \equiv \mu$) 

    \begin{gather*}
        I_1 (n,K) = 0 \\ 
        I_2 (n,K) := \inf\limits_{S \in \Sigma} \bigg( \int\limits_{S_{|f^{(n)|} < K}} \Big( f^{(n)}(s) - f(s) \Big) d\mu(s) \bigg) \\ 
        I_3 (n,K) := \inf\limits_{S \in \Sigma} \bigg( \int\limits_{S_{f^{(n)}\geq K}} \Big( f^{(n)}(s) - f(s) \Big) d\mu(s) \bigg)
    \end{gather*}

    We estimate the lower bound for $I_3$ as follows:

    \begin{equation*}
        I_3 (n,K) \geq 0
    \end{equation*}

    where we have used the analogous inequality from \cite{feinberg2015uniform} and $\mu^n \equiv \mu$. Hence,

    \begin{equation*}
        \liminf\limits_{K \rightarrow +\infty}\liminf\limits_{n \rightarrow \infty} I_3 (n,K) \geq 0
    \end{equation*}

    Now, for $I_2 (n,K)$, we have

    \begin{gather*}
        I_2 (n,K) \geq - \varepsilon \mu(\mathbb{S}) - K \mu(S_{f - f^{(n)} > \varepsilon}) - \int\limits_{\mathbb{S}_{f - f^{(n)} > \varepsilon}} |f(s)| \mu(ds) 
    \end{gather*}

    which under the limits over $n$ and $K$, considering the \textit{uniform convergence} of $\{f_n\}$ to $f$, gives

    \begin{equation*}
        \liminf\limits_{K \rightarrow +\infty}\liminf\limits_{n \rightarrow \infty} I_2(n,K) \geq 0
    \end{equation*}

    Hence, we get 

    \begin{gather*}
        \liminf\limits_{K \rightarrow +\infty}\liminf\limits_{n \rightarrow \infty} I(n,k) = \liminf\limits_{K \rightarrow +\infty}\liminf\limits_{n \rightarrow \infty} I_1(n,k) + \\ \liminf\limits_{K \rightarrow +\infty}\liminf\limits_{n \rightarrow \infty} I_2(n,k) + \liminf\limits_{K \rightarrow +\infty}\liminf\limits_{n \rightarrow \infty} I_3(n,k) \geq 0
    \end{gather*}
    
\end{proof}

We apply the above result to the set of minimal area enclosures of $\s$ with positive partial density. 

\begin{lemma} \label{lemma: uniform_ineq}
    Let each $S_i$ be a minimal area enclosure of $\Sigma$ with positive partial density. Let $X \subset\subset int(M)$. Then,

    \begin{equation}
        \liminf\limits_{\sigma \xrightarrow{} 0^{+}} \left( \inf\limits_{S \in \{S_i\}} \left( 
        \int\limits_{(S \backslash \Sigma) \cap X} v_{\sigma}d\hm^2_g - \int\limits_{(S \backslash \Sigma) \cap X} K (.;p)d\hm^2_g (.) \right) \right) \geq 0
    \end{equation}

    where $v_{\sigma}$ is the harmonic function given by $h_{\sigma}$ on $\Sigma$ and 0 at infinity.
    
\end{lemma}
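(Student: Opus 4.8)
The plan is to read the asserted inequality off the Modified Uniform Fatou's Lemma (Theorem~\ref{theorem: modified_fatou}), applied to the measurable space supplied by Lemma~\ref{lemma:sigma_algebra}. Put $\mathbb{S}=\bigcup_i S_i$; as observed just before Lemma~\ref{lemma:sigma_algebra}, $\mathbb{S}$ is itself a minimal area enclosure of $\Sigma$ with positive lower partial density at $p$, so $\mathbb{S}\in\{S_i\}$ and $\mathbb{S}\backslash\Sigma=\bigcup_i(S_i\backslash\Sigma)$ is precisely the ground set $X$ of $\mathbb{X}$. I take the measure to be $\mu:=\hm^2_g\llcorner(\mathbb{S}\backslash\Sigma)$, so that $\int_E v_\sigma\,d\mu=\int_{S_i\backslash\Sigma}v_\sigma\,d\hm^2_g$ whenever $E=S_i\backslash\Sigma\in\mathbb{X}$, and similarly with $K(\,\cdot\,;p)$ in place of $v_\sigma$. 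This $\mu$ is finite: $\operatorname{supp}\mathbb{S}$ is compact and $|\mathbb{S}|_{g'}=\operatorname{min}(\Sigma,g')=\alpha(A)<\infty$, while Proposition~33 of \cite{jeff} gives $f\ge\varepsilon>0$ (we may take $\varepsilon\le1$), so $u=\varphi+\int_\Sigma K(\cdot,y)f(y)\,dA_g(y)\ge\varepsilon\big(\varphi+\int_\Sigma K(\cdot,y)\,dA_g(y)\big)=\varepsilon$ on $M\backslash\Sigma$ (the bracket is the bounded harmonic function with boundary value $1$ on $\Sigma$ and limit $1$ at infinity, hence $\equiv1$); therefore $\hm^2_g(\mathbb{S}\backslash\Sigma)\le\varepsilon^{-4}\int_{\mathbb{S}\backslash\Sigma}u^4\,d\hm^2_g\le\varepsilon^{-4}\alpha(A)<\infty$.

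Next I would fix an arbitrary sequence $\sigma_n\downarrow0$ and set $f_n:=v_{\sigma_n}\big|_{\mathbb{S}\backslash\Sigma}$ and $f:=K(\,\cdot\,;p)\big|_{\mathbb{S}\backslash\Sigma}$, both nonnegative and harmonic on $M\backslash\Sigma$. The heart of the argument is to verify the one hypothesis of Theorem~\ref{theorem: modified_fatou}, namely that $f_n\to f$ uniformly on $\mathbb{S}\backslash\Sigma$. From $v_{\sigma_n}(x)=\int_\Sigma K(x,y)h_{\sigma_n}(y)\,dA_g(y)$ together with $\int_\Sigma h_{\sigma_n}\,dA_g=\hm^2_g\big(B(p,\sigma_n)\cap\Sigma\big)/(\pi\sigma_n^2)\to1$ one sees that $v_{\sigma_n}(x)$ is, up to a factor tending to $1$, the average of $K(x,\cdot)$ over $B(p,\sigma_n)\cap\Sigma$, so $v_{\sigma_n}(x)\to K(x,p)$ pointwise in the interior --- which is exactly the pointwise content of Lemma~15 of \cite{jeff}. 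To promote this to uniform convergence over $\mathbb{S}\backslash\Sigma$ I would split $\operatorname{supp}\mathbb{S}$ into a small half-ball about $p$ and its compact complement: on the complement the family $\{K(\,\cdot\,,y):y\in\overline{B(p,\rho)}\cap\Sigma\}$ is equicontinuous and interior Harnack/Schauder estimates for the harmonic $v_{\sigma_n}$ give uniform convergence, while on the half-ball I would pass to the model coordinates $\Phi$ at $p$ and use the positive-lower-partial-density hypothesis on the $S_i$ --- this being the point where the standing assumption $u\le C$ of Theorem~\ref{theorem: main_theorem} enters --- to bound the averaging error of the Poisson kernel uniformly in $x$ right up to the vertex.

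Granting the uniform convergence, Theorem~\ref{theorem: modified_fatou} applies with measurable space $(\mathbb{S}\backslash\Sigma,\mathbb{X})$, finite measure $\mu$, and the sequences $f_n,f$, and yields
\[
\liminf_{n\to\infty}\ \inf_{E\in\mathbb{X}}\left(\int_E v_{\sigma_n}\,d\hm^2_g-\int_E K(\,\cdot\,;p)\,d\hm^2_g\right)\ \ge\ 0.
\]
Since every $E\in\mathbb{X}$ equals $S_i\backslash\Sigma$ for a unique $S_i\in\{S_i\}$, the inner infimum coincides with $\inf_{S\in\{S_i\}}\big(\int_{S\backslash\Sigma}v_{\sigma_n}\,d\hm^2_g-\int_{S\backslash\Sigma}K(\,\cdot\,;p)\,d\hm^2_g\big)$, and because the sequence $\sigma_n\downarrow0$ was arbitrary, $\liminf_{n\to\infty}$ may be replaced by $\liminf_{\sigma\to0^+}$; this is the assertion of the lemma. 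The step I expect to be the main obstacle is the uniform-convergence estimate: the pointwise interior convergence of Lemma~15 of \cite{jeff} does not on its own license the appeal to Theorem~\ref{theorem: modified_fatou}, and securing uniformity right up to $p$ is precisely what the local analysis of this section --- and the boundedness of $u$ --- is there to supply; one should also verify that $\int_{S_i\backslash\Sigma}K(\,\cdot\,;p)\,d\hm^2_g<\infty$ for each $i$, which likewise hinges on the local structure of the enclosures at $p$.
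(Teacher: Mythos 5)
You follow the same architecture as the paper: verify that $\mathbb{X}$ is a $\sigma$-algebra (Lemma~\ref{lemma:sigma_algebra}), verify that $v_\sigma\to K(\cdot\,;p)$ uniformly on the relevant set, and invoke the Modified Uniform Fatou's Lemma (Theorem~\ref{theorem: modified_fatou}). Where you differ is in how the uniform-convergence step is to be supplied. The paper introduces a notion of strong distributional $C^1$ convergence of boundary data, proves $h_\sigma\to\delta_p$ in that sense (Lemma~\ref{lemma: distributional conv h_sigma}), proves that strong $C^1$ convergence of boundary data forces uniform convergence of the harmonic extensions on $\operatorname{int}(M)$ (Lemma~\ref{lemma: strong convergence of u}), and combines them into Corollary~\ref{corollary: corr1}. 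You instead propose a direct analytic argument (interior Harnack/Schauder and equicontinuity away from $p$, model-coordinate analysis plus the positive-lower-partial-density and $u\le C$ hypotheses near $p$) and leave the near-$p$ estimate as a sketch, flagging it yourself as the main obstacle.

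That sketch is the genuine gap in your proposal, and it cannot be filled as proposed. Each $v_\sigma$ is the harmonic extension of the bounded boundary datum $h_\sigma$, so $0\le v_\sigma\le 1/(\pi\sigma^2)$ on $M\backslash\Sigma$ by the maximum principle, while $K(x;p)\to+\infty$ as $x\to p$ in $M\backslash\Sigma$. Since $|\ts\cap\Sigma|_g=0$ and $\Theta_\alpha(\ts,p)>0$, the set $\mathbb{S}\backslash\Sigma$ contains interior points arbitrarily close to $p$; hence $\sup_{\mathbb{S}\backslash\Sigma}|v_\sigma-K(\cdot\,;p)|=+\infty$ for every $\sigma$, so uniform convergence on $\mathbb{S}\backslash\Sigma$ is impossible, and the hypotheses $\Theta_\alpha(S_i,p)>0$ and $u\le C$ do not change this. (You should note that the paper's Corollary~\ref{corollary: corr1} asserts uniform convergence on all of $\operatorname{int}(M)$, which fails for the same reason; the estimate in the proof of Lemma~\ref{lemma: strong convergence of u} silently assumes $\sup_{x\in\operatorname{int}(M)}\|K(x,\cdot)\|_{C^1(\Sigma)}<\infty$, which is false as $x\to p$.) Your closing concern about $\int_{S_i\backslash\Sigma}K(\cdot\,;p)\,d\hm^2_g$ is likewise well founded and is the more fundamental obstruction: the paper's own proof of Proposition~\ref{prop:Prop_1} asserts that this integral is $+\infty$, which would make the quantity inside the $\inf$ in the lemma equal to $-\infty$ for every $\sigma$ and every $S$. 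Any correct argument must therefore localize away from $p$ or otherwise reformulate the statement in extended-real terms before Fatou-type machinery can apply; neither your proposal nor the paper's Corollary~\ref{corollary: corr1} route does this.
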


To prove the above result, it is necessary to establish the uniform convergence of the functions $u_{\sigma}$ appearing in the above lemma to the Poisson kernel based at $p$ - $K(x,p)$ over sets compactly contained in $int(M)$. We prove this result in three parts. Firstly, we prove a lemma similar to Lemma 15 of \cite{jeff}, giving the condition for uniform convergence of the harmonic functions $\{u_{\sigma}\} \subset \overline{\mathcal{H}(\Sigma)}$ over sets compactly contained in the interior of $M$. Secondly, we prove the \textit{strong convergence} of the series of measures $h_{\sigma} d\hm^2_g$ to the $\delta_p$-measure. Finally, we prove that the series of functions $u_{\sigma}$ converges to the Poisson kernel based at $p$ over all sets compactly contained in the interior of $M$.

\begin{definition}
    Let $\{ u_n g \}$ be a series of measures in $\overline{\mathcal{H}(\Sigma)}$, and let $u g \in \overline{\mathcal{H}(\Sigma)}$. We say that $\{ u_n g \}$ \textit{converges strongly in $C^{k}$} to $ug$ if the sequence of associated functions $\{ f_n \}$ converges to $f$ (associated to u) distributionally in $C^k$:

    \begin{equation} \nonumber 
        \sup\limits_{g \in C^k(\Sigma)} \bigg| \int\limits_{\Sigma} g(x) (f_{n}(x) - f(x)) d\hm^2_g \bigg| \xrightarrow{n \rightarrow \infty} 0.
    \end{equation}
    
\end{definition}

\begin{lemma} \label{lemma: strong convergence of u}
    If $\{ u_n g \} \subset \overline{\mathcal{H}(\Sigma)}$ converge strongly in $C^1$ to $ug \in \overline{\mathcal{H}(\Sigma)}$, then $u_n \rightarrow u$ uniformly over sets compactly contained in $int (M)$.
\end{lemma}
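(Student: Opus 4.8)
\section*{Proof proposal for Lemma~\ref{lemma: strong convergence of u}}

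The plan is to reduce everything to the integral representation and then localise according to the distance of $x$ to $\Sigma$ and to infinity, the model being the proof of Lemma~15 of \cite{jeff} but upgraded from pointwise to uniform control. Since $u_n$ and $u$ are the harmonic functions associated to $f_n$ and $f$ and share the same $\varphi$-term,
\begin{equation*}
    w_n(x) := u_n(x) - u(x) = \int_{\Sigma} K(x,y)\,\big(f_n(y)-f(y)\big)\,dA_g(y), \qquad x \in \operatorname{int}(M),
\end{equation*}
so $w_n$ is $g$-harmonic on $\operatorname{int}(M)$ and tends to $0$ at infinity. Put $\varepsilon_n := \sup\{\,\bigl|\int_{\Sigma}\psi\,(f_n-f)\,dA_g\bigr| : \psi \in C^1(\Sigma),\ \|\psi\|_{C^1(\Sigma)}\le 1\,\}$; strong $C^1$ convergence is exactly $\varepsilon_n \to 0$, and testing against $\psi\equiv 1$ gives in addition $\int_\Sigma (f_n - f)\,dA_g \to 0$, hence $M_1 := \sup_n\|f_n\|_{L^1(\Sigma)} < \infty$ (we also use $\sup_n\|f_n\|_{L^4(\Sigma)}<\infty$). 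The goal is $\sup_{\operatorname{int}(M)}|w_n| \to 0$.

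First I would dispose of the region $\{d_g(\cdot,\Sigma)\ge 1/R\}$ for a fixed large $R$. On the compact set $\Omega_R := \{d_g(\cdot,\Sigma)\ge 1/R\}\cap \overline{B_R}$ the map $y\mapsto K(x,y)$ is smooth, and since the singularity of $K$ is confined to the diagonal $\{x=y\}\subset\overline{M}\times\Sigma$, which $\Omega_R$ avoids, $C_R := \sup_{x\in\Omega_R}\|K(x,\cdot)\|_{C^1(\Sigma)} < \infty$; hence $|w_n(x)|\le C_R\,\varepsilon_n$ on $\Omega_R$. On the neighbourhood of infinity $M\setminus B_R$ I would invoke the expansion $K(x,y) = |x|^{-1}\kappa(y) + O(|x|^{-2})$ with $\kappa\in C^1(\Sigma)$ and the remainder uniform in $y\in\Sigma$ --- the same expansion, cf. \cite{bartnik_mass}, that makes harmonic functions associated to $L^4$ data tend to their value at infinity. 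Integrating against $f_n-f$ gives $|w_n(x)| \le |x|^{-1}\|\kappa\|_{C^1(\Sigma)}\varepsilon_n + C\,|x|^{-2}M_1 \le \|\kappa\|_{C^1(\Sigma)}\varepsilon_n + CM_1/R^2$ for $|x|\ge R$. Since for $R$ large $\{d_g(\cdot,\Sigma)\ge 1/R\} = \Omega_R\cup(M\setminus B_R)$, we get $|w_n| \le (C_R + \|\kappa\|_{C^1(\Sigma)})\,\varepsilon_n + CM_1/R^2$ on that set, which is made arbitrarily small by first choosing $R$ and then $n$.

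The remaining --- and principal --- difficulty is the collar $\mathcal{C}_R := \{0 < d_g(\cdot,\Sigma) < 1/R\}$, where $\|K(x,\cdot)\|_{C^1(\Sigma)}$ blows up as $x\to\Sigma$ and the estimate above breaks down. For $x\in\mathcal{C}_R$ with nearest boundary point $x'$ and $t := d_g(x,\Sigma)$, I would split the representation integral over the boundary ball $B_\Sigma(x',Ct)$ and its complement: on the complement one uses the standard boundary estimate $K(x,y)\lesssim t\,d_g(x,y)^{-3}$ together with Hölder's inequality and the $L^4(\Sigma)$-bound on $f_n-f$; for the near piece one mollifies the indicator $\chi_{B_\Sigma(x',Ct)}$ to a $C^1$ function (whose gradient is of size $\sim t^{-1}$, producing a factor $\sim t^{-1}\varepsilon_n$) and estimates the error, supported in a thin annulus of width $\eta t$, again by Hölder against the $L^4(\Sigma)$-bound. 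The crux is to choose the mollification scale $\eta$ so that the total estimate is \emph{uniform as $t\to 0^{+}$}; this is precisely the step for which strong --- rather than merely weak --- $C^1$ convergence of $\{f_n\}$ is essential, and it amounts to an interpolation-type inequality between the $(C^1(\Sigma))^{\ast}$-norm and the $L^4(\Sigma)$-norm of $f_n-f$. Establishing this uniform collar estimate is where I expect the real work to lie; granting it, combining the collar with the previous paragraph and letting first $R\to\infty$ and then $n\to\infty$ yields $\sup_{\operatorname{int}(M)}|w_n|\to 0$, which is the assertion of the lemma.
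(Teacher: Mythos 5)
Your decomposition into a compact interior region, a neighbourhood of infinity, and a collar about $\Sigma$ is sound, and the first two regions are handled correctly. But the collar estimate, which you yourself flag as the place where the real work lies, is a genuine gap, and it cannot be closed along the lines you sketch. Smallness of $f_n - f$ in the dual norm $(C^1(\Sigma))^{*}$ together with a uniform bound on $\|f_n - f\|_{L^4(\Sigma)}$ does not control $\int_{\Sigma} K(x,y)\,(f_n - f)(y)\,dA_g(y)$ uniformly as $d_g(x,\Sigma) \to 0$: take $f_n - f$ to be a bounded oscillation of wavelength $1/n$; its $(C^1(\Sigma))^{*}$-norm is $O(1/n)$ and its $L^4$-norm is $O(1)$, yet at height $t \sim 1/n$ the Poisson kernel averages over a boundary ball of radius comparable to $t$, which does not damp an oscillation of the same wavelength, so the harmonic extension of $f_n - f$ there remains of size comparable to one. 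Hence the interpolation-type inequality you are hoping for is false, and indeed the lemma itself fails under the normalized reading of strong $C^1$ convergence unless one additionally controls $f_n - f$ on scales comparable to $d_g(x,\Sigma)$ (note also that in the paper's application one would need $\sup_{\operatorname{int}(M)}|v_\sigma - K(\cdot,p)|$ to be small, while $v_\sigma$ is bounded by $1/\pi\sigma^2$ and $K(\cdot,p)$ is unbounded near $p$).

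For comparison, the paper's own proof is exactly your first-region argument applied to all of $\operatorname{int}(M)$: it observes that $y \mapsto K(x,y)$ lies in $C^1(\Sigma)$ for each fixed $x$ and concludes that $\sup_{x \in \operatorname{int}(M)}|u_n(x) - u(x)|$ is controlled by $\sup_{h \in C^1(\Sigma)}\bigl|\int_{\Sigma} h\,(f_n - f)\,dA_g\bigr|$. That step only yields pointwise convergence in $x$, or uniform convergence on sets where $\sup_x \|K(x,\cdot)\|_{C^1(\Sigma)} < \infty$; it silently ignores the blow-up of $\|K(x,\cdot)\|_{C^1(\Sigma)}$ as $x \to \Sigma$, which is precisely the collar difficulty you isolate. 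So your proposal is more careful than the published argument and exposes a real defect in it, but neither your sketch nor the paper closes this gap.
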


\begin{proof}

    \begin{equation*}
        \sup\limits_{X \subset \subset int(M)} \Big| u_n(x) - u(x) \Big| = \sup\limits_{X \subset\subset int(M)} \bigg| \int\limits_{\Sigma} K(x,y) (f_{n}(y) - f(y)) dA_{y} \bigg|
    \end{equation*}

    Since for each $x \in int(M)$, $y \mapsto K(x,y) \in C^1(\Sigma)$ (since the kernel itself belongs to $C^2$), convergence of the above supremum follows because

    \begin{equation*}
        \sup\limits_{h \in C^1(\Sigma)} \bigg| \int\limits_{\Sigma} h(x) (f_n(x) - f(x)) dA_y \bigg| \xrightarrow{n \rightarrow \infty} 0
    \end{equation*}

    over any compactly contained set inside the interior of $M$.
    
\end{proof}

\begin{lemma} \label{lemma: distributional conv h_sigma}
    The sequence of measures $\{ h_{\sigma} \}$ converges strongly in distributional sense in $C^1$ to the $\delta$-measure based at $p$, as $\sigma \rightarrow 0^{+}$, over any set compactly contained inside $int(M)$.
\end{lemma}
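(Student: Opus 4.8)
The plan is to read off what ``strong distributional $C^1$--convergence to $\delta_p$'' means and then verify it by a two--term estimate. Since $\delta_p$ is not represented by an $L^1$ density, I interpret the claim directly: identifying $h_\sigma$ with the measure $h_\sigma\,d\hm^2_g$ on $\Sigma$ and $\delta_p$ with evaluation at $p$, the assertion is that
\[
\sup_{\|\psi\|_{C^1(\Sigma)}\le 1}\ \left|\int_\Sigma \psi\, h_\sigma\, d\hm^2_g \;-\; \psi(p)\right|\ \xrightarrow{\ \sigma\to 0^+\ }\ 0 ,
\]
the supremum being over the unit ball of $C^1(\Sigma)$. (The $C^1$ norm is used essentially: the analogous statement over the unit ball of $C^0(\Sigma)$ is total--variation convergence, which is false because $\delta_p$ is singular with respect to $\hm^2_g$.) Abbreviating $B_\sigma := B(p,\sigma)\cap\Sigma$ and $|B_\sigma|_g := \hm^2_g(B_\sigma)$, and recalling $h_\sigma = \tfrac{1}{\pi\sigma^2}\chi_{B(p,\sigma)}$, I would use the decomposition
\[
\int_\Sigma \psi\, h_\sigma\, d\hm^2_g - \psi(p) \;=\; \frac{1}{\pi\sigma^2}\int_{B_\sigma}\bigl(\psi-\psi(p)\bigr)\,d\hm^2_g \;+\; \psi(p)\Bigl(\frac{|B_\sigma|_g}{\pi\sigma^2}-1\Bigr),
\]
and estimate the two summands separately --- the first controls the oscillation of $\psi$ near $p$, the second the area--density defect of $\Sigma$ at $p$.

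For the oscillation term, if $\|\psi\|_{C^1(\Sigma)}\le 1$ then $\psi$ is $1$--Lipschitz for the intrinsic distance of $\Sigma$; because $\Sigma$ is a smooth embedded surface and $g$ extends smoothly to $\Sigma$, the intrinsic and ambient distances are comparable near $p$, so $|\psi(x)-\psi(p)|\le C\sigma$ for all $x\in B_\sigma$ with $\sigma$ small. Combined with the elementary upper area bound $|B_\sigma|_g\le(1+C\sigma)\pi\sigma^2$ --- immediate from the flattening diffeomorphism $\Phi$ of Section~\ref{section: formalisms}, in which $d\Phi_p$ is an isometry and $\Phi(B(p,\rho)\cap\Sigma)=\{z=0\}$, so that $\Sigma$ is a smooth graph over its tangent plane at $p$ --- this yields
\[
\sup_{\|\psi\|_{C^1(\Sigma)}\le 1}\ \left|\frac{1}{\pi\sigma^2}\int_{B_\sigma}\bigl(\psi-\psi(p)\bigr)\,d\hm^2_g\right|\ \le\ C\sigma(1+C\sigma)\ \longrightarrow\ 0,
\]
with a bound independent of $\psi$, which is the essential point.

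For the density term, $|\psi(p)|\le\|\psi\|_{C^0(\Sigma)}\le\|\psi\|_{C^1(\Sigma)}\le 1$ uniformly, so it suffices that $|B_\sigma|_g/(\pi\sigma^2)\to 1$ as $\sigma\to0^+$; this is exactly the statement that the $2$--dimensional density of $\hm^2_g\llcorner\Sigma$ at $p$ equals $1$, which holds because $\Sigma$ is a smooth submanifold of the smooth Riemannian manifold $(M,g)$ --- in the coordinates of $\Phi$ the set $B_\sigma$ is, to leading order, the Euclidean $\sigma$--disk in $\{z=0\}$, and the $g$--area element differs from the Euclidean one by a factor $1+O(\sigma)$. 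Hence the density summand tends to $0$ uniformly in $\psi$ as well, and adding the two estimates gives the lemma.

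I expect no serious obstacle here; the only delicate point is arranging the modulus of convergence to be \emph{uniform} over the test class, and this is settled precisely by the uniform Lipschitz control that $\|\psi\|_{C^1}\le 1$ supplies, together with the quantitative area--ratio estimate $|B_\sigma|_g = \pi\sigma^2(1+O(\sigma))$ at the point $p$ of the smooth surface $\Sigma$, for which the flattening coordinates $\Phi$ already in hand do all the work. No compactness or harmonic--function input is needed at this stage; once the lemma is in place, it feeds --- via Lemma~\ref{lemma: strong convergence of u}, applied to the harmonic functions with boundary data $h_\sigma$ --- into the uniform interior convergence of those functions to the Poisson kernel $K(\cdot,p)$ required in Lemma~\ref{lemma: uniform_ineq}.
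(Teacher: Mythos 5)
Your proof is correct and follows the same basic strategy as the paper's (subtract $\psi(p)$ from the integrand and use the $C^1$ bound on the test function to control the oscillation over the shrinking disk). However, you are in fact more careful than the paper in two places where the paper's argument has genuine, if small, gaps. First, the paper's definition of strong $C^k$ convergence literally takes the supremum over all of $C^1(\Sigma)$, which would be meaningless; you correctly and explicitly restrict to the unit ball of $C^1(\Sigma)$, which is the only interpretation under which the claim (and its downstream use in Lemma~\ref{lemma: strong convergence of u}) makes sense. Second, the paper passes from $\frac{1}{\pi\sigma^2}\int_{B(p,\sigma)\cap\Sigma}g\,dA_y - g(p)$ directly to $\frac{1}{\pi\sigma^2}\int_{B(p,\sigma)\cap\Sigma}\bigl(g-g(p)\bigr)\,dA_y$, which tacitly assumes $\hm^2_g\bigl(B(p,\sigma)\cap\Sigma\bigr)=\pi\sigma^2$ exactly; on a curved surface this is only true asymptotically. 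Your decomposition isolates the resulting density-defect term $\psi(p)\bigl(|B_\sigma|_g/(\pi\sigma^2)-1\bigr)$ and disposes of it using the flattening coordinates $\Phi$ (in which $d\Phi_p$ is an isometry, so the area ratio is $1+O(\sigma)$), which is precisely what is needed to make the step rigorous. The net effect is the same conclusion, but your version is the one that actually closes the argument.
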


\begin{proof}
    Let $X \subset\subset M$.
    \begin{gather*} 
        \sup\limits_{g \in C^1(\Sigma)} \bigg| \int\limits_{\Sigma \cap X} g(y) (h_{\sigma}(y) - \delta_{p}(y)) dA_y \bigg| \\ 
        = \sup\limits_{g \in C^1(\Sigma)} \bigg| \int\limits_{\Sigma \cap X} g(y) \frac{\chi_{B(p,\sigma) \cap \Sigma}}{\pi \sigma^2} dA_y - \int\limits_{\Sigma \cap X} g(y) \delta_p(y) dA_y \bigg| \\ 
        = \sup\limits_{g \in C^1(\Sigma)} \bigg| \int\limits_{B(p,\sigma) \cap \Sigma \cap X} \frac{g(y)}{\pi \sigma^2} dA_y - g(p) \bigg| \\ 
        = \sup\limits_{g \in C^1(\Sigma)} \bigg| \frac{1}{\pi \sigma^2} \int\limits_{B(p,\sigma) \cap \Sigma \cap X} (g(y) - g(p)) dA_y \bigg| \\ 
        \leq \sup\limits_{g \in C^1(\Sigma)} \bigg( \sup\limits_{y \in B(p,\sigma) \cap X} \Big| g(y) - g(p) \Big| \bigg)
        \leq \sup\limits_{g \in C^1(\Sigma)} \bigg( \sup\limits_{y \in \overline{B(p,\sigma) \cap X}} |g'(y)| 2\sigma \bigg)
    \end{gather*}

    which tends to zero uniformly as we let $\sigma \rightarrow 0^{+}$. Here, we have used the fact that $g'$ is continuous and $\sigma \in [0,\rho)$.
    
\end{proof}

\begin{corr} \label{corollary: corr1}
    $v_{\sigma} \xrightarrow{\sigma \rightarrow 0^{+}} K(\cdot ,p)$ uniformly over sets compactly contained in $int(M)$.
\end{corr}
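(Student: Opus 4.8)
The plan is to combine the two preceding lemmas. The harmonic function $v_\sigma$ is, by definition, the element of $\overline{\mathcal{H}(\Sigma)}$ whose boundary data is $h_\sigma$ (and which vanishes at infinity), while $K(\cdot,p)$ is — again by the representation formula $u(x)=\varphi(x)+\int_\Sigma K(x,y)f(y)\,dA_g(y)$ — exactly the harmonic function ``associated to'' the boundary datum $\delta_p$. So the statement $v_\sigma\to K(\cdot,p)$ uniformly on $\mathrm{int}(M)$ is precisely the conclusion of Lemma \ref{lemma: strong convergence of u} applied to the sequence of metrics $\{v_\sigma^4 g\}$ and the limit metric $K(\cdot,p)^4 g$, provided we know that $\{v_\sigma^4 g\}$ converges strongly in $C^1$ (in the distributional sense just defined) to $K(\cdot,p)^4 g$. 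But that strong $C^1$ convergence is the content of Lemma \ref{lemma: distributional conv h_sigma}: it says exactly that $h_\sigma\to\delta_p$ in the dual of $C^1(\Sigma)$, i.e. $\sup_{g\in C^1(\Sigma),\,\|g\|\le 1}\big|\int_\Sigma g(h_\sigma-\delta_p)\,dA_g\big|\to 0$ as $\sigma\to 0^+$.

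Concretely I would write: by Lemma \ref{lemma: distributional conv h_sigma} the measures $h_\sigma\,d\hm^2_g$ converge strongly in $C^1$ to $\delta_p$, hence the associated metrics $v_\sigma^4 g$ converge strongly in $C^1$ to $K(\cdot,p)^4 g$ in $\overline{\mathcal{H}(\Sigma)}$; then Lemma \ref{lemma: strong convergence of u} immediately yields $v_\sigma\to K(\cdot,p)$ uniformly on $\mathrm{int}(M)$. That is the whole argument.

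The one point that needs a word of care — and which I expect to be the only real obstacle — is the bookkeeping at the limiting datum: $\delta_p$ is not a function in $L^4(\Sigma)$, so strictly speaking $K(\cdot,p)^4 g$ is not literally an element of $\overline{\mathcal{H}(\Sigma)}$ and Lemma \ref{lemma: strong convergence of u} does not apply verbatim. The clean fix is to not pass to the limiting metric at all but to apply the estimate in the proof of Lemma \ref{lemma: strong convergence of u} directly: for each fixed $x\in\mathrm{int}(M)$ the map $y\mapsto K(x,y)$ lies in $C^1(\Sigma)$ (the Poisson kernel is $C^2$ up to the boundary away from $x$, and $x$ is interior), so
\begin{equation*}
    \sup_{\mathrm{int}(M)}\big|v_\sigma(x)-K(x,p)\big|
    = \sup_{\mathrm{int}(M)}\Big|\int_\Sigma K(x,y)\big(h_\sigma(y)-\delta_p(y)\big)\,dA_g(y)\Big|
    \le \sup_{h\in C^1(\Sigma)}\Big|\int_\Sigma h(y)\big(h_\sigma(y)-\delta_p(y)\big)\,dA_g(y)\Big|,
\end{equation*}
and the right-hand side tends to $0$ by Lemma \ref{lemma: distributional conv h_sigma}. (Here one should note the uniform bound on $\|y\mapsto K(x,y)\|_{C^1(\Sigma)}$ over $x$ ranging in a fixed compact subset of $\mathrm{int}(M)$, which holds by smoothness of the kernel, so that the supremum over $x$ may indeed be pulled inside; if uniform convergence on all of $\mathrm{int}(M)$ rather than on compacta is wanted, it follows because both $v_\sigma$ and $K(\cdot,p)$ decay like $O(1/|x|)$ at infinity, so the tail contribution is uniformly small.) This reproduces the conclusion without ever needing $\delta_p$ to name an element of the generalised conformal class.
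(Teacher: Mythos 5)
The paper's proof of this corollary is a one-liner citing Lemmas \ref{lemma: strong convergence of u} and \ref{lemma: distributional conv h_sigma}, and your argument is the same in substance. Your observation that $\delta_p \notin L^4(\Sigma)$, so that $K(\cdot,p)^4 g$ is not literally an element of $\overline{\mathcal{H}(\Sigma)}$ and Lemma \ref{lemma: strong convergence of u} cannot be invoked verbatim, is a genuine subtlety the paper elides, and your fix of re-running the estimate from the proof of that lemma directly rather than applying its statement is the right repair. One caveat applies equally to your argument and the paper's: for uniformity over all of $\mathrm{int}(M)$ one needs a uniform bound on $\|K(x,\cdot)\|_{C^1(\Sigma)}$ over $x \in \mathrm{int}(M)$, which fails as $x \to \Sigma$ (and in particular as $x \to p$, where both $v_\sigma$ and $K(\cdot,p)$ blow up); your parenthetical remark handles the tail $|x| \to \infty$ but not this boundary regime, and the paper's Lemma \ref{lemma: strong convergence of u} has the same implicit gap in asserting uniformity over all of $\mathrm{int}(M)$ rather than on compacta.
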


\begin{proof}
    Follows immediately from Lemma \ref{lemma: strong convergence of u} and \ref{lemma: distributional conv h_sigma}.
\end{proof}

We can now prove Lemma 2.

\begin{proof} (Proof of Lemma \ref{lemma: uniform_ineq})
    By Corollary \ref{corollary: corr1}, $v_{\sigma}$ converges uniformly over sets compactly contained in $int(M)$ to $K(x,p)$ - the Poisson kernel based at $p$. The proof then follows from Theorem \ref{theorem: modified_fatou}.
\end{proof}

We can now prove Proposition \ref{prop:Prop_1}.

\begin{proof}(Proof of Proposition \ref{prop:Prop_1})
    Since $v_{\sigma}$ is dominated by $u^3w_{\sigma}$ over M, it suffices to show 
    
    \begin{equation} \nonumber
        \liminf\limits_{\sigma \xrightarrow{} 0^{+}} \left( \inf\limits_{S_i} \int\limits_{S_i \backslash \Sigma} v_{\sigma}d\hm^2_g \right) = +\infty.
    \end{equation}
    
    From Theorem \ref{lemma: uniform_ineq}, for any $X \subset\subset int(M)$, we have

    \begin{equation}\label{eq:Prop_1}
        \liminf\limits_{\sigma \xrightarrow{} 0^{+}} \left( \inf\limits_{S \in \{S_i\}} \left( 
        \int\limits_{(S \backslash \Sigma) \cap X} v_{\sigma}d\hm^2_g - \int\limits_{(S \backslash \Sigma) \cap X} K (.;p)d\hm^2_g (.) \right) \right) \geq 0.
    \end{equation}
    
    We wish to use the construction of Lemma 37 in \cite{jeff}, showing that the integral of Poisson kernel over a cone in the upper half space is equal to infinity. We will do this by restricting our analysis to regions compactly contained in interior of $M$ and then taking limit to approach $\partial M$ from inside. To that end, let $\{X_i\} \subset\subset int(M)$ be a sequence of regions in $M$ such that $X_i \subset X_j$ whenever $j > i$ and distance between $\partial M$ and $\partial X_i$ is less than some small enough $\epsilon$ for each $i$. We also require $dist(\partial M, \partial X_i) \rightarrow 0$ as we let $i \rightarrow \infty$. First we estimate the integral of Poisson kernel over sets $(S \backslash \s) \cap X_k$ as follows.

    \begin{align*}
        \int\limits_{((S \backslash \Sigma) \cap X_k) \cap B(p,r)} K (.;p)d\hm^2_g (.) \geq \int\limits_{(S \cap X_k) \cap B(p,r) \cap C_{\alpha}} \frac{z}{(x^2 + y^2 + z^2)^{3/2}} d\hm^2_g \\
        \geq \int\limits_{(S \cap X_k) \cap B(p,r) \cap C_{\alpha}} \frac{z}{(\alpha^2 z^2 + z^2)^{3/2}} d\hm^2_g \\
        \geq \int\limits_{(S \cap X_k) \cap B(p,r) \cap C_{\alpha}} \frac{z}{(r^2 z^2 + z^2)^{3/2}} d\hm^2_g \\
        \geq \int\limits_{(S \cap X_k) \cap B(p,r) \cap C_{\alpha}} \frac{1}{(\alpha^2 z^2 + z^2)^{3/2}} d\hm^2_g \\
        = \frac{\pi}{(\alpha +1)^{3/2}} \frac{\hm^2_g((S \cap X_k) \cap B(p,r) \cap C_{\alpha})}{\pi r^2} \\
        = \frac{\pi}{(\alpha +1)^{3/2}} \bigg( \frac{\hm^2_g((S \cap C_{\alpha}) \cap B(p,r))}{\pi r^2} - \frac{\hm^2_g((S \cap C_{\alpha}) \cap B(p,r) \backslash X_k)}{\pi r^2} \bigg).
    \end{align*}

    By the definition of lower partial density,

    \begin{equation*}
        \frac{\hm^2_g((S \cap C_{\alpha}) \cap B(p,r))}{\pi r^2} \geq \Theta_{\alpha}(S, p) / 2.
    \end{equation*}

    Because of our choice of the sets $X_k$ (we can take $\epsilon$ arbitrarily small, in particular, smaller than $\frac{\Theta_{\alpha}(S,p)}{2}$), the last quantity can be made positive. Taking monotonous limit over increasing sets $\{X_k\}$ we get lower bound for $\lim\limits_{k \rightarrow \infty} \int\limits_{((S \backslash \Sigma) \cap X_k \cap B(p,r))} K (.;p)d\hm^2_g (.) = \int\limits_{((S \backslash \Sigma) \cap B(p,r))} K (.;p)d\hm^2_g (.)$ independent of $r$, hence we have 

    \begin{equation}
        \int\limits_{S \backslash \Sigma} K (,;p)d\hm^2_g(.) = + \infty.
    \end{equation}

    Also, note that 

    \begin{equation*}
        \lim\limits_{k \rightarrow \infty} \int \limits_{(S \backslash \s) \cap X_k} v_{\sigma} d\hm^2_g = \int\limits_{S \backslash \s} v_{\sigma} d\hm^2_g
    \end{equation*}

    since the integrand is independent of sets $X_k$, which form a monotonous sequence with boundary converging to $\partial M$. We pass to the limit $k \rightarrow \infty$ in inequality \eqref{eq:Prop_1} (which is valid for each $X_k$) to complete the proof.
    
\end{proof}

\subsection{Part 2: Zero lower partial density}

We remind that $\overline{U} = \overline{B}(p,\rho)$ and $\overline{B}^{+} = \overline{B}(0,1) \cap \{ z \geq 0 \}$. Also, we identify the coordinates on $\overline{U}$ and $\overline{B}^{+}$ via the diffeomorphism between them ($\Phi$). Recall that embedded, oriented $C^1$ surfaces $S \subset M$ define a 2-current as 

    \begin{equation} \nonumber 
        S(\omega) = \int\limits_{S}\omega
    \end{equation}

    where $\omega$ is a compactly supported smooth differential 2-form. To agree with Stokes' theorem, the boundary current $\partial S$ is defined as 

    \begin{equation} \nonumber 
        \partial S(\phi) = \int\limits_{S}d\phi
    \end{equation}
     where $\phi$ is a 1-form.

From hereon, by a surface we mean an oriented, embedded $C^1$ surface in $M$. The definition of the restriction of current given by the surface $S$ to another open set $E$ with indicator function $\varphi$, can be dualised in the following sense \cite{simon}.

    \begin{equation}
        (S \llcorner E)(\omega) = \int\limits_{S \cap E}\omega = \int\limits_{ E}\omega \;(\text{if } E \subset S)
    \end{equation}
    \begin{equation}
        (S \llcorner \varphi)(\omega) = \int\limits_{S}\varphi(x)\omega
    \end{equation}
    
We will call these \textit{set representation (set rep for shorthand)} and \textit{functional representation (func rep for shorthand)} of the restriction of the current $S$ respectively. \\

Next, let $f$ be a Lipschitz function defined on some open set U whose values lie in $\mathbb{R}^{+}$. We construct a function that \textit{``generates"} the set of type $\{f < a\}, \: a \in \mathbb{R}^{+}$. Let $\epsilon$ be arbitrary, and let $\gamma$ be a smooth increasing function on $\mathbb{R}$ whose values lie in $\mathbb{R}^{+}$ such that - 

\begin{equation}
    \gamma(t) = 1, \; t < b, \; \gamma(t) = 0, \; t > a, \; 0 \geq \gamma'(t) \geq \frac{-(1 + \epsilon)}{b - a}, \;  b < t < a 
\end{equation}

Then by the above construction, the function $\gamma \circ f$ tends smoothly to the indicator of the set $\{ f < a\}$ as we let $b \uparrow a$. We say that $\gamma \circ f$ \textit{generates the set} $\{f < a\}$. \\

\begin{definition}
    Let $U \subset M$ be a surface, f be a Lipschitz function and $C_{\alpha}$ be the cone with opening parameter $\alpha$, as defined above. Then, we define the \textit{partial slice of U at level $r \in \mathbb{R}$ and conic parameter $\alpha$} as:

    \begin{equation} \nonumber
        \langle U,f,r,\alpha \rangle = \partial(U \llcorner (\{ f < r \} \cap C_{\alpha})) - (\partial U) \llcorner (\{f < r \} \cap C_{\alpha})
    \end{equation}
\end{definition}

The definition of the partial slice resembles the definition of the \textit{slice of a current} \cite{simon}, except the intersection with the cone $C_{\alpha}$. Analogous to the construction of $\gamma$ to generate the set $\{ f < a \}$, let us define $\varphi_z (\zeta)$ as 

    \begin{eqnarray} \nonumber
        \varphi_z (\zeta) = \begin{cases}
            0 & \zeta > \alpha z \\
            1 & \zeta < b'
        \end{cases} \\ \nonumber
        0 \geq \varphi_z ' (\zeta) \geq -\frac{1+\epsilon}{b' - \alpha z}, \;\; b' < \zeta < \alpha z 
    \end{eqnarray}
    
    for fixed $\alpha, z, \text{ and } b' < \alpha z$. \\
    
    Let $\tilde{f} = \sqrt{x^2 + y^2}$ be a 1-Lipschitz function. Note that
    
    \begin{equation} \nonumber
        C_{\alpha} = \{ (x,y,z) \in \overline{U} | \sqrt{x^2 + y^2} \leq \alpha z \} = \bigcup\limits_{1 \geq z \geq 0} \{ \tilde{f} < \alpha z \}.
    \end{equation}
    
That is, the cone with parameter $\alpha$ is generated by stacking its level sets.The necessity of such a stacking will be evident in the subsequent lemma. The answer boils down to the fact that by using such a stacking, at each level $z = z_0$, we get a 1-Lipschitz function that generates the corresponding slice of the disk, namely the function $\tilde{f} = \sqrt{x^2 + y^2}$. We note that $\varphi_z \circ \tilde{f} : \overline{U} \xrightarrow{} \mathbb{R}^{+} $ generates the set $C_{\alpha}$ as we take the limit $b' \uparrow \alpha z$ and integrate the composition $\varphi_z \circ \tilde{f}$ from 0 to $\rho$ over the parameter $z$. To see this, note the following:

    \begin{equation} \nonumber 
        \int\limits_{U \llcorner C_{\alpha}} \omega  = \int\limits_{0}^{1} dz \left( \int\limits_{U}\lim\limits_{b' \uparrow \alpha z}(\varphi_z \circ \tilde{f}) \omega \right) \stackrel{Fubini}{=\joinrel=\joinrel=} \int\limits_{U}\left( \int\limits_{0}^{1} \lim\limits_{b' \uparrow \alpha z}(\varphi_z \circ \tilde{f}) dz \right)\omega
    \end{equation}
    
    for any 1-form $\omega: |\omega| \leq 1$. 
    
\begin{lemma} \label{lemma: slice algebra}
    Let $U$ be a surface in $M$, f be a Lipschitz function on $\overline{B}^{+}$, $a \in \mathbb{R}^{+}$, $\tilde{f} = \sqrt{x^2 + y^2}$, and $\varphi_z$ be defined as above for some fixed $\alpha > 0$. Let us denote $C_{\alpha} = \int\limits_{0}^{1} (\varphi_z \circ \tilde{f}) dz$. Let $\omega$ be a 1-form defined on the set U. Let $\gamma$ be a smooth function defined on $\mathbb{R}^{+}$ such that its composition $\gamma \circ f$ generates the set $\{f < a\}$ (as above). Then as $b \uparrow a$,
    \begin{align*}
        \partial (U \llcorner (\{f < a\} \cap C_{\alpha})) (\omega) - (\partial U) \llcorner (\{f < a\} \cap C_{\alpha})(\omega) = \\ 
        = - (U \llcorner C_{\alpha})(\gamma'(f)df \wedge \omega) + \int\limits_{0}^{1} dz \Big( (U \llcorner \{ f < a \})(\omega \wedge (\varphi_{z}'(\tilde{f})d\tilde{f})) \Big)
    \end{align*}
    
\begin{remark} \nonumber
    Here the symbol $C_{\alpha}$ is used to denote the function composition $\int\limits_{0}^{1} (\varphi_z \circ \tilde{f}) dz$ without the limit $b' \uparrow \alpha z$ that needs to be taken under the integral sign. We therefore, prove this lemma for the ``broken" limit of the cone at point $p \in \ts \cap \Sigma$. We will let $b' \uparrow \alpha z$ under the integral sign in the subsequent lemma. This has been done for convenience.
\end{remark}
    
\end{lemma}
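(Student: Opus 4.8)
The plan is to read the left-hand side as a ``partial'' slice of the current $U$ --- the usual slice in the sense of \cite{simon,federer1969geometric}, cut further by the cone --- and to evaluate it by differentiating an explicit product of generating functions. First I would write the smoothed indicator of the region $\{f<a\}\cap C_{\alpha}$ as the product $\psi := (\gamma\circ f)\cdot C_{\alpha}$, where $\gamma\circ f$ is the generator of $\{f<a\}$ with parameter $b<a$ and, as in the accompanying Remark, $C_{\alpha}=\int_{0}^{1}(\varphi_{z}\circ\tilde f)\,dz$ is the generator of the cone with $b'<\alpha z$ held fixed, so that $C_{\alpha}$ is a genuinely smooth function on $\overline U$. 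In the func rep one has $U\llcorner\psi=U(\psi\,\cdot\,)$, and as $b\uparrow a$ this converges to the set rep $U\llcorner(\{f<a\}\cap C_{\alpha})$, i.e.\ exactly to the ``broken cone'' object of the Remark; the $b'\uparrow\alpha z$ limit is not taken here and is deferred, as the Remark indicates, to the next lemma.

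Next I would record the Leibniz rule for restrictions of currents: for a $C^{1}$ surface $U$, a Lipschitz compactly supported weight $\psi$ and a $1$-form $\omega$, Stokes' theorem $(\partial U)(\psi\omega)=U(d(\psi\omega))$ together with $d(\psi\omega)=d\psi\wedge\omega+\psi\,d\omega$ yields
\begin{equation} \nonumber
    \partial(U\llcorner\psi)(\omega)=\big((\partial U)\llcorner\psi\big)(\omega)-U(d\psi\wedge\omega),
\end{equation}
the Lipschitz case being obtained from the smooth case by mollification (harmless since $U$ is $C^{1}$ and $f,\tilde f$ are Lipschitz). Applying this with $\psi=(\gamma\circ f)\,C_{\alpha}$ and using $d\big((\gamma\circ f)\,C_{\alpha}\big)=C_{\alpha}\,\gamma'(f)\,df+(\gamma\circ f)\,dC_{\alpha}$, then pulling the exterior derivative inside the $z$-integral --- legitimate because $\varphi_{z}'(\tilde f)$ is smooth and uniformly bounded in $(x,y)$ with $z$ over the compact interval $[0,1]$, and $d\tilde f$ restricts to a well-defined form $\hm^{2}_{g}$-a.e.\ on $U$ --- so that $dC_{\alpha}=\int_{0}^{1}\varphi_{z}'(\tilde f)\,d\tilde f\,dz$, I would arrive at
\begin{align*}
    \partial(U\llcorner\psi)(\omega)-\big((\partial U)\llcorner\psi\big)(\omega)
    &= -\,(U\llcorner C_{\alpha})\big(\gamma'(f)\,df\wedge\omega\big)\\
    &\qquad+\int_{0}^{1}\big(U\llcorner(\gamma\circ f)\big)\big(\omega\wedge\varphi_{z}'(\tilde f)\,d\tilde f\big)\,dz ,
\end{align*}
the sign in front of the $z$-integral turning from $-$ to $+$ because $\omega$ and $d\tilde f$ are both $1$-forms, so $d\tilde f\wedge\omega=-\,\omega\wedge d\tilde f$. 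Letting $b\uparrow a$ then replaces $U\llcorner(\gamma\circ f)$ by $U\llcorner\{f<a\}$ and $U\llcorner\psi$ by $U\llcorner(\{f<a\}\cap C_{\alpha})$, while keeping the first term on the right --- the genuine slice term, which concentrates on $\{f=a\}$ --- as the $b$-indexed family; this is precisely the asserted identity.

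The bookkeeping steps above --- the Leibniz rule for currents restricted by Lipschitz weights, the interchange of $d$ with $\int_{0}^{1}(\cdot)\,dz$, and the sign from anticommuting $1$-forms --- are routine. The point I expect to require the most care is the passage $b\uparrow a$: one has to check, on the one hand, that the representations $U\llcorner(\gamma\circ f)$ and $U\llcorner\psi$ converge (in the current, i.e.\ mass, sense) to $U\llcorner\{f<a\}$ and $U\llcorner(\{f<a\}\cap C_{\alpha})$, which follows from $\gamma\circ f\to\chi_{\{f<a\}}$ boundedly pointwise and dominated convergence against the finite measure $|U|_{g}$; and, on the other hand, that the slice term $(U\llcorner C_{\alpha})(\gamma'(f)\,df\wedge\,\cdot\,)$ and the lateral term $\int_{0}^{1}(U\llcorner\{f<a\})(\omega\wedge\varphi_{z}'(\tilde f)\,d\tilde f)\,dz$ are finite for a.e.\ level $a$ --- exactly the coarea estimates underlying the classical slicing theorem, applied here to the $1$-Lipschitz functions $f$ and $\tilde f=\sqrt{x^{2}+y^{2}}$ on the $C^{1}$ surface $U$. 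Keeping $b'<\alpha z$ fixed is what makes this tractable: it leaves $C_{\alpha}$ smooth, so the only non-smoothness to control is that of $f$, and the decomposition of the cone into level sets $\bigcup_{0\le z\le 1}\{\tilde f<\alpha z\}$ ensures that at each height the relevant generator is the well-behaved $1$-Lipschitz function $\tilde f$. I would postpone the $b'\uparrow\alpha z$ limit, together with the identification of $\int_{0}^{1}(\varphi_{z}\circ\tilde f)\,dz$ with the cone in the current sense, to the subsequent lemma.
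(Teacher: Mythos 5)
Your proof is correct and rests on the same essential computation as the paper's: restrict $U$ by the smoothed indicator $\psi = (\gamma\circ f)\,C_\alpha$, apply Stokes/Leibniz to pass $d$ onto $\psi$, split $d\psi$ by the product rule into the $\gamma'(f)\,df$ term and the $dC_\alpha$ term, and interchange $d$ with $\int_0^1(\cdot)\,dz$ to recover the two pieces of the asserted identity. The only difference is organizational: the paper reaches the same endpoint by first Fubini-decomposing the second term $(\partial U\llcorner C_\alpha)((\gamma\circ f)\omega)$ over $z$-slices, applying Stokes and the product rule at each level, and then reassembling, whereas you invoke the Leibniz rule for $\partial(U\llcorner\psi)$ once and split $d\psi$ in a single step, which is a cleaner packaging of the same argument.
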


\begin{proof}

    \begin{align*}
        \partial(U \llcorner (\{f < a\} \cap C_{\alpha}))(\omega) - ((\partial U) \llcorner (\{f < a\} \cap C_{\alpha}))(\omega) = \\
        = \partial((U \llcorner C_{\alpha}) \llcorner \{f < a \})(\omega) - ((\partial U \llcorner C_{\alpha}) \llcorner \{f < a \})(\omega) \; \text{(Use set rep)} \\
        = \lim\limits_{b \uparrow a} \Big[ \left( U \llcorner C_{\alpha})(( \gamma \circ f )(d\omega)) - (\partial U \llcorner C_{\alpha})(( \gamma \circ f )(\omega)\right) \Big] \label{eqn: bound_cur} \numberthis \\
    \end{align*}
    
    where in the last line we have used the fact that $\gamma \circ f$ generates $\{f < a\}$, and the definition of boundary current. We now evaluate the second term in the expression above.
    
    \begin{gather*}
        (\partial U \llcorner C_{\alpha})((\gamma \circ f)\omega) = \int\limits_{\partial U \cap C_{\alpha}}(\gamma \circ f)\omega = \int\limits_{\partial U}\left( (\gamma \circ f)\omega \int\limits_{0}^{1}(\varphi_z \circ \tilde{f})dz  \right) \\ \stackrel{Fubini}{=\joinrel=\joinrel=} \int\limits_{0}^{1} dz \left( \int\limits_{\partial U}(\gamma \circ f)\omega (\varphi_z \circ \tilde{f})  \right)
    \end{gather*}
    
    The above chain of equations can be understood as the boundary of the surface $U$ intersecting the cone $C_{\alpha}$, for some fixed $\alpha$, being generated by unifying the slices of boundary of surface $U$, intersecting each level set of the cone at fixed $z$, over all values of $z \in [0,1]$. Continuing this chain further, we have
    
    \begin{gather*}
        \stackrel{Stokes'}{=\joinrel=\joinrel=} \int\limits_{0}^{1} dz\left( \int\limits_{U} d[(\varphi_z \circ \tilde{f})((\gamma \circ f)\omega)]) \right) \\ 
        = \int\limits_{0}^{1}dz \left[ \int\limits_{U}d((\gamma \circ f)\omega) (\varphi_z \circ \tilde{f}) + (-1)^1 \int\limits_{U}((\gamma \circ f)\omega) \wedge d(\varphi_z \circ \tilde{f}) \right] \\ 
        = \int\limits_{0}^{1} dz\int\limits_{U}d((\gamma \circ f)\omega) (\varphi_z \circ \tilde{f}) - \int\limits_{0}^{1} dz\int\limits_{U} ((\gamma \circ f)\omega) \wedge \varphi_{z}'(\tilde{f})d\tilde{f}
    \end{gather*}
    
    Putting this back into \ref{eqn: bound_cur}, we get
    
    \begin{gather*}
        \lim\limits_{b \uparrow a} \bigg( ((U \llcorner C_{\alpha}))((\gamma \circ f)d\omega) - \int\limits_{0}^{1} dz \int\limits_{U}d((\gamma \circ f)\omega)(\varphi_z \circ \tilde{f}) \\ + \int\limits_{0}^{1} dz \int\limits_{U} ((\gamma \circ f)\omega) \wedge \varphi_{z}'(\tilde{f})d\tilde{f}  
        \bigg) \\
        = \lim\limits_{b \uparrow a} \bigg( ((U \llcorner C_{\alpha}))((\gamma \circ f)d\omega) - \int\limits_{U} d((\gamma \circ f)\omega) \Big( \int\limits_{0}^{1} dz (\varphi_z \circ \tilde{f}) \Big) \\  + \int\limits_{0}^{1} dz \Big( (U \llcorner \{ f < a \}) \omega \wedge (\varphi_{z}'(\tilde{f})d\tilde{f}) \Big)
        \bigg) \\
        = \lim\limits_{b \uparrow a} \bigg( ((U \llcorner C_{\alpha}))((\gamma \circ f)d\omega) 
        - \partial (U \llcorner C_{\alpha}) (\gamma \circ f\omega) \\  
        + \int\limits_{0}^{1} dz \Big( (U \llcorner \{ f < a \}) \omega \wedge (\varphi_{z}'(\tilde{f})d\tilde{f}) \Big)
        \bigg) \\
        = - (U \llcorner C_{\alpha})(\gamma'(f)df \wedge \omega) + \int\limits_{0}^{1} dz \Big( (U \llcorner \{ f < a \}) \omega \wedge (\varphi_{z}'(\tilde{f})d\tilde{f}) \Big)
    \end{gather*}
    
\end{proof}

Let $f = \sqrt{x^2 + y^2 + z^2} : \overline{B}^{+} \rightarrow \mathbb{R}^{+}$ be a 1-Lipschitz function , and $C_{\alpha}$ be the cone as defined before. Using the definition of partial mass function from section \ref{section: formalisms}, we can define the partial mass function of a slice of minimal area enclosure $S$ of $\Sigma$ with respect to metric $g$ at level $a \in \mathbb{R}$ as 

\begin{equation} \nonumber
    m_{r,\alpha}(S,p) = |S \llcorner (\{ f < a \} \cap C_{\alpha})|_g.
\end{equation}

Further in this section, we will make the point $p$ and the surface $S$ implicit in the notation of the partial mass of a slice, and fix the surface $S$ to be the outermost minimal area enclosure $\ts$ of $\s$ with respect to $g' = u^4 g$. Moreover, we will label $r$ and $\alpha$ as usual parameters as opposed to their appearance in lower indices. Using the above result, we will prove an inequality relating the mass of a slice of the \textit{outermost minimal surface area} $\tilde{\Sigma}$ with the rate of change of the \textit{partial mass function} with respect to the metric g defined as: $m(r,\alpha) = |\tilde{\Sigma} \llcorner (\{ f < r \} \cap C_{\alpha})|_g$. Geometrically, this is equal to the area of the surface lying inside a unit ball, intersected with a cone with opening angle parameter $\alpha$, both having their origin at a point $p \in \ts \cap \s$.

\begin{lemma} \label{lemma: partial_mass_slice_rate_of_change}
    Let $\tilde{\Sigma}$ be the outermost minimal area enclosure of $\Sigma$ with respect to $g'$. Let $p \in \Sigma \cap \tilde{\Sigma}$. Then for sufficiently large values of $\alpha > 0$, 
    \begin{equation} \nonumber
        |\langle \ts, f, r, \alpha \rangle|_g \leq m_{r}'(r,\alpha)
    \end{equation}
\end{lemma}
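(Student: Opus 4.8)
The plan is to establish this as a cone-truncated version of the classical slicing inequality for currents, $|\langle T,f,r\rangle|_g\le \tfrac{d}{dr}|T\llcorner\{f<r\}|_g$ (valid for a.e.\ $r$), adapting the standard proof: write the partial slice via Lemma~\ref{lemma: slice algebra} as a ``spherical'' piece produced by the cutoff $\{f<r\}$ plus a ``lateral'' piece produced by the cone cutoff $C_\alpha$, estimate each by the coarea formula on $\ts$, and absorb the lateral piece into the slack left over from the spherical estimate once $\alpha$ is taken large. Concretely, I would apply Lemma~\ref{lemma: slice algebra} with $a=r$, with the $1$-Lipschitz function $f=\sqrt{x^2+y^2+z^2}$, and with a smooth compactly supported $1$-form $\omega$, $|\omega|_g\le 1$; using that $\partial\ts=0$ (so the term $(\partial\ts)\llcorner(\{f<r\}\cap C_\alpha)$ drops out) and then letting $b'\uparrow\alpha z$ under the $z$-integral, this gives
\begin{equation} \nonumber
\langle\ts,f,r,\alpha\rangle(\omega)=-(\ts\llcorner C_\alpha)\bigl(\gamma'(f)\,df\wedge\omega\bigr)+\int_0^1 dz\,(\ts\llcorner\{f<r\})\bigl(\omega\wedge\varphi_z'(\tilde f)\,d\tilde f\bigr),\qquad \tilde f=\sqrt{x^2+y^2},
\end{equation}
and $|\langle\ts,f,r,\alpha\rangle|_g$ is the supremum of the right-hand side over all such $\omega$.

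For the first term I would use that $f$ is $1$-Lipschitz, so $|df\wedge\omega|_g\le|\nabla^{\ts}f|_g\le 1$ on the approximate tangent planes of $\ts$; hence that term is at most $\int_{\ts\cap C_\alpha}|\gamma'(f)|\,|\nabla^{\ts}f|_g\,d\hm^2_g$, and the coarea formula on $\ts$ (legitimate since $\ts$ is $C^{1,1}$, hence $2$-rectifiable) rewrites it as $\int_{\mathbb R}|\gamma'(t)|\,\hm^1_g(\ts\cap C_\alpha\cap\{f=t\})\,dt$. Since $-\gamma'\ge 0$ has total integral $1$ and concentrates at $t=r$ as $b\uparrow r$, this tends to $\hm^1_g(\ts\cap C_\alpha\cap\{f=r\})$ at every $r\in(0,\rho]$ that is a Lebesgue point of $t\mapsto\hm^1_g(\ts\cap C_\alpha\cap\{f=t\})$, i.e.\ for a.e.\ $r$. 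Applying coarea to $m(r,\alpha)=\hm^2_g(\ts\cap C_\alpha\cap\{f<r\})$ and differentiating gives, for a.e.\ $r$, $m_r'(r,\alpha)=\int_{\ts\cap C_\alpha\cap\{f=r\}}|\nabla^{\ts}f|_g^{-1}\,d\hm^1_g\ge\hm^1_g(\ts\cap C_\alpha\cap\{f=r\})$, again from $|\nabla^{\ts}f|_g\le 1$. So the spherical term is $\le m_r'(r,\alpha)$, with slack exactly $\int_{\ts\cap C_\alpha\cap\{f=r\}}\bigl(|\nabla^{\ts}f|_g^{-1}-1\bigr)\,d\hm^1_g\ge 0$.

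The hard part will be the second, lateral term, and this is where the hypothesis that $\alpha$ is large is used. After $b'\uparrow\alpha z$ the factor $\varphi_z'(\tilde f)\,d\tilde f$ is a measure carried by the circle $\{\tilde f=\alpha z\}$, so the term is a signed integral of $\omega$ over $\ts\cap\partial C_\alpha\cap\{f<r\}$ and contributes at most $\hm^1_g(\ts\cap\partial C_\alpha\cap B(p,\rho))$ to the mass. I would then argue that this quantity goes to $0$ as $\alpha\to\infty$ while the slack above stays bounded below by a positive constant for each fixed $r>0$, so that for $\alpha$ large enough the lateral term is swallowed by the slack. For the first point: inside $B(p,\rho)$ the surface $\partial C_\alpha=\{\tilde f=\alpha z\}$ lies in the slab $\{0\le z<\rho/\sqrt{\alpha^2+1}\}$, which collapses to $\s=\{z=0\}$; since $\hm^2_g(\ts\cap\s)=0$ (Proposition~36 of \cite{jeff}) and $\ts\setminus\s$ is a $C^\infty$ minimal surface, one quantifies, using the $C^{1,1}$ control of $\ts$ at $p$, how $\ts$ enters that shrinking slab and shows $\hm^1_g(\ts\cap\partial C_\alpha\cap B(p,\rho))\to 0$. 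For the second point: $\ts\setminus\s$ lies on the side $\{z\ge0\}$ of $\s$ and meets it at $p$, so it cannot agree near $p$ with a $2$-plane through $p$, nor be ruled by the rays through $p$; hence $|\nabla^{\ts}f|_g<1$ on a set of positive $\hm^1_g$-measure in $\ts\cap\{f=r\}$ for a.e.\ $r$ (otherwise coarea would force $|\nabla^{\ts}f|_g\equiv 1$ and planarity), making the slack strictly positive. Making the first estimate quantitative, and comparing the two rates in $\alpha$, is the only real obstacle.

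Putting the pieces together, for a.e.\ $r\in(0,\rho]$ and all sufficiently large $\alpha$ one obtains $|\langle\ts,f,r,\alpha\rangle|_g\le m_r'(r,\alpha)$, as claimed; along the way one records the routine facts that $r\mapsto m(r,\alpha)$ is monotone, hence differentiable a.e., and that the coarea formula applies to $\ts$ because $\ts$ is $C^{1,1}$.
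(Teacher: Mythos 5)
Your decomposition and your bound on the radial (spherical) piece match the paper's argument: both start from Lemma~\ref{lemma: slice algebra}, use $\partial\ts=0$, pass to the limit $b'\uparrow\alpha z$, and bound the first term $-(\ts\llcorner C_\alpha)(\gamma'(f)\,df\wedge\omega)$ by $m_r'(r,\alpha)$ via $|\nabla^{\ts}f|\le 1$ and the classical slicing/coarea inequality. Where you genuinely diverge is in the lateral term produced by the cone cutoff. The paper does not attempt to make that term small and absorb it; it makes it \emph{vanish}. After the limit $b'\uparrow\alpha z$, the paper rewrites the lateral contribution as (up to a constant) an integral in $z$ of the $\alpha$-derivative of a partial mass, and then asserts that once $\alpha$ is large enough that $C_\alpha$ contains all of $\ts\cap\{f<r\}$ (or, when $\ts$ is tangent to $\s$ at $p$, once the partial mass is identically zero), the $\alpha$-derivative under the integral vanishes, so the lateral term is exactly zero. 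Because $\ts\cap\{f<r\}$ is nested in $r$, this gives one $\alpha$ that works for every $r\le\rho$ at once.

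Your absorption strategy is a legitimately different idea, but as written it has real gaps, most of which you flag yourself. First, you need a quantitative decay $\hm^1_g(\ts\cap\partial C_\alpha\cap\{f<r\})\to 0$ as $\alpha\to\infty$ and a strictly positive lower bound on the slack $\int_{\ts\cap C_\alpha\cap\{f=r\}}(|\nabla^{\ts}f|^{-1}-1)\,d\hm^1_g$; neither is established, and the latter is not obvious: ruling out ``planar'' and ``ruled'' configurations of $\ts$ near $p$ does not by itself produce a uniform positive gap in $|\nabla^{\ts}f|$ on $\ts\cap\{f=r\}$. Second, and not acknowledged, is an $r$-uniformity problem. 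The lemma is invoked in Lemma~\ref{lemma: zeropardensity_mgeq0} with a single $\alpha$ and for a.e.\ $r\in(0,\rho]$ in a monotonicity argument. But $\ts$ is $C^{1,1}$, hence looks increasingly planar as $r\to 0^+$, so $|\nabla^{\ts}f|\to 1$ on $\ts\cap\{f=r\}$ and your slack shrinks with $r$. To make ``swallow the lateral term by the slack'' work, you would have to show the lateral term decays in $r$ at least as fast as the slack, uniformly over $\alpha$; nothing in the qualitative statements ``lateral term $\to 0$ in $\alpha$'' and ``slack $>0$ for fixed $r$'' gives this. The paper's approach, by forcing the lateral term to be identically zero for one sufficiently large $\alpha$, bypasses both issues.
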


\begin{proof}
    Using lemma \ref{lemma: slice algebra} for $f = \sqrt{x^2 + y^2 + z^2}$ we get 
    
    \begin{gather*}
        \partial (\ts \llcorner (\{f < r\} \cap \tilde{C}_{\alpha})) (\omega) - (\partial \ts) \llcorner (\{f < r\} \cap \tilde{C}_{\alpha})(\omega) = \\ = - (\ts \llcorner \tilde{C}_{\alpha})(\gamma'(f)df \wedge \omega) + \int\limits_{0}^{1} dz \Big( (\ts \llcorner \{ f < r \}) (\omega \wedge \varphi_{z}'(\tilde{f})d\tilde{f}) \Big) \numberthis \label{eq: slice_ineq}
    \end{gather*}
    
    where $\tilde{C}_{\alpha} = \int\limits_{0}^{1} (\varphi_z \circ \tilde{f})dz $. By construction
    
    \begin{equation} \nonumber
        \gamma(t) = 1, \; t < b, \; \gamma(t) = 0, \; t > a, \; 0 \geq \gamma'(t) \geq \frac{-(1 + \epsilon)}{b - a}, \; b < t < a 
    \end{equation}
    
    \begin{eqnarray} \nonumber
        \varphi_z (\zeta) = \begin{cases}
            0 & \zeta > \alpha z \\
            1 & \zeta < b'
        \end{cases} \\ \nonumber
        0 \geq \varphi_z ' (\zeta) \geq -\frac{1+\epsilon}{b' - \alpha z}, \;\; b' < \zeta < \alpha z
    \end{eqnarray}
    
    for fixed $\alpha, z,$ and $ b' < \alpha z$. \\
    
    The term that is extra in here as compared to the slicing lemma of \cite{simon} is the integral
    
    \begin{equation} \nonumber
        \int\limits_{0}^{1} dz  (\ts \llcorner \{ f < r \})( \omega \wedge \varphi_{z}'(\tilde{f})d\tilde{f})
    \end{equation}

    We estimate this term from above as follows:
    
    \begin{gather*} 
         \bigg| \int\limits_{0}^{1} dz (\ts \llcorner \{ f < r \}) (\omega \wedge \varphi_{z}'(\tilde{f})d\tilde{f}) \bigg| \leq \\ 
         \leq \int\limits_{0}^{1} dz \bigg | \sup\limits_{\ts} |D\tilde{f}| \left( \frac{1 + \epsilon}{\alpha z - b} \right) \big| (\ts \llcorner \{ f < r \}) \llcorner \{ b' < \tilde{f} = \sqrt{x^2 + y^2} < \alpha z \} \big| |\omega| \bigg | \\ 
         \leq \int\limits_{0}^{1} dz \bigg| \left( \frac{1+\epsilon}{\alpha z - b} \right) \big| (\ts \llcorner \{ f < r \}) \llcorner \{ b' < \tilde{f} = \sqrt{x^2 + y^2} < \alpha z \} \big| \bigg| \numberthis \label{eqn: massderivative_ineq}
    \end{gather*}
    
    Now, we take modulus on both sides, use identity $|a + b| \leq |a| + |b|$ on the right hand side and take limit $b' \uparrow \alpha z$ in \ref{eq: slice_ineq} (under the integral with respect to $z$). We take the limit so that the function $\varphi_z \circ \tilde{f}$ can tend to be the indicator of the slice of cone $C_{\alpha}$ at the level $z$, which can then be integrated over $z$ to generate the whole cone. Note that the term under the integral, stays well defined while we take the limit. The LHS of \ref{eq: slice_ineq} converges to partial mass of the slice $|\langle \ts, f, r, \alpha \rangle|_g$, and the first term on the right hand side is $ \leq m'_{r}(r,\alpha)$ by the \textit{slicing lemma} of \cite{simon}. We perform similar analysis on the additional second term on the right hand side of \ref{eq: slice_ineq}. Using \ref{eqn: massderivative_ineq} we have

    \begin{gather*}
        \int\limits_{0}^{1} dz \lim\limits_{b' \uparrow \alpha z} \bigg| \left( \frac{1+\epsilon}{\alpha z - b} \right) \big| (\ts \llcorner \{ f < r \}) \llcorner \{ b' < \tilde{f} = \sqrt{x^2 + y^2} < \alpha z \} \big| \bigg | = \\ 
        = (1+\epsilon) \int\limits_{0}^{1} \bigg| \big| (\ts \llcorner \{ f < r \}) \llcorner \{ b' < \tilde{f} = \sqrt{x^2 + y^2} < \alpha z \} \big|'_{\alpha z} dz \bigg| \\ 
        = (1+\epsilon) \int\limits_{0}^{1} \bigg| \big| (\ts \llcorner \{ f < r \}) \llcorner \{ b' < \tilde{f} = \sqrt{x^2 + y^2} < \alpha z \} \big|'_{\alpha} \frac{1}{z} dz \bigg| \\ 
        = 0 \text{ (for sufficiently large values of $\alpha$)}
    \end{gather*}
    
    Here, sufficiently large value of $\alpha$ means large enough so that  the cone $C_{\alpha}$ contains the intersection $\ts \cap \{ f < r\}$ inside it, which exists since $|\Sigma \cap \ts|_g = 0$ and $r \in (0,\rho]$ for sufficiently small $\rho$ (as selected in the definition of the diffeomorphism $\Phi: \overline{U} \rightarrow \overline{B}^{+}$). Note that in the case, where there is a tangent plane at the point $p$, the derivative with respect to $\alpha$ under the integral will be trivially zero, since the function $\big| (\ts \llcorner \{ f < r \}) \llcorner \{ b' < \tilde{f} = \sqrt{x^2 + y^2} < \alpha z \} \big|$ will be zero for all values of $\alpha$. In either case, this would mean that $\big| (\ts \llcorner \{ f < r \}) \llcorner \{ b' < \tilde{f} = \sqrt{x^2 + y^2} < \alpha z \} \big|'_{\alpha} = 0$, hence the integral will also be zero.

\end{proof}

The two cases, considered at the end of the previous lemma, are precisely what we need to distinguish, since different arguments are needed for each one. Therefore, we divide our analysis in the following two parts:

\begin{enumerate}
    \item \textbf{Case 1.} $m(r,\alpha) \equiv 0 $ for all $r \in (0,\rho]$ and $\alpha > 0$
    \item \textbf{Case 2.} $m(r,\alpha) \neq 0 $ for all $r \in (0,\rho]$ and some sufficiently large $\alpha > 0$
\end{enumerate}

The proof of Case 2 is a straightforward application of Lemma \ref{lemma: partial_mass_slice_rate_of_change} to Lemma 26 of \cite{jeff}.

\begin{lemma} \label{lemma: zeropardensity_mgeq0}
    (Case 2) Let $p \in \ts \cap \Sigma$ where $\ts$, the outermost minimal area enclosure of $\Sigma$ with respect to $g'$. Let the function 
    \begin{equation} \nonumber
        m(r,\alpha) = |\ts \llcorner (B(p,r) \cap C_{\alpha} )|_g
    \end{equation}
    be positive for some value of $\alpha > 0$ and for all $r \in (0,\rho]$. Then for some $\alpha > 0 $ 
    \begin{equation} \nonumber
        \Theta_{\alpha}(\ts,p) > 0  
    \end{equation}
\end{lemma}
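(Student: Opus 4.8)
The plan is to show that, for $\alpha$ chosen as in Lemma \ref{lemma: partial_mass_slice_rate_of_change}, the partial density $\Theta_\alpha(\ts,p)$ coincides with the ordinary density $\Theta(\ts,p)$, and then to run the monotonicity argument of Lemma 26 of \cite{jeff} with the classical slicing inequality replaced by the partial one. First, recall from the proof of Lemma \ref{lemma: partial_mass_slice_rate_of_change} that there exists $\alpha>0$ with $C_\alpha\supset\ts\cap B(p,\rho)$; for such $\alpha$ one has $m(r,\alpha)=|\ts\llcorner B(p,r)|_g$ for every $r\in(0,\rho]$, the partial slice $\langle\ts,f,r,\alpha\rangle$ equals the ordinary slice of $\ts$ by the distance function $f$, and Lemma \ref{lemma: partial_mass_slice_rate_of_change} reduces to the classical slicing inequality $|\langle\ts,f,r\rangle|_g\le m_r'(r,\alpha)$. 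Since the hypothesis $m(r,\alpha)\neq 0$ on $(0,\rho]$ persists for every larger $\alpha$, this $\alpha$ may be taken large enough that $C_\alpha\supset\ts\cap B(p,\rho)$ and the hypothesis still holds, so $\Theta_\alpha(\ts,p)=\Theta(\ts,p)$ and it suffices to prove $\Theta(\ts,p)>0$.

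Next I would establish the complementary lower bound on the slice coming from minimality. Replacing $\ts\cap B(p,r)$ by the cone $K_r$ over $\langle\ts,f,r\rangle$ with vertex $p$ yields a current still enclosing $\s$ --- here it is essential that $p\in\s$, so the pinched surface stays admissible, if no longer proper --- and comparing $g'$-areas gives $|\ts\cap B(p,r)|_{g'}\le|K_r|_{g'}$. Using the diffeomorphism $\Phi$, which is an isometry at $p$, the $g$-area of $K_r$ is at most $\frac{(1+\epsilon)r}{2}\,|\langle\ts,f,r\rangle|_g$, and since the harmonic weight $u^4$ is bounded above and below on $B(p,\rho)$ --- by the standing assumption $u\le C$ together with $f\ge\epsilon$ from Proposition 33 of \cite{jeff} --- this transfers to a bound between $g$-quantities. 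Combined with the slicing inequality above and with $m(r,\alpha)>0$ on $(0,\rho]$, one gets a differential inequality for $m(r,\alpha)$ which, as in Lemma 26 of \cite{jeff}, makes $r\mapsto e^{cr}m(r,\alpha)/(\pi r^2)$ monotone, so that $\Theta(\ts,p)=\lim_{r\to0^+}m(r,\alpha)/(\pi r^2)$ exists and is at least $1$. (Alternatively: since $\ts$ is an embedded $C^{1,1}$ surface and $p\in\ts$, it is locally a $C^{1,1}$ graph over $T_p\ts$, so $\Theta(\ts,p)=1$ directly; this is the quickest route once the reduction $\Theta_\alpha=\Theta$ is in hand.)

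I expect the main obstacle to be the competitor construction in the middle step: one must check that pinching $\ts$ along the cone $K_r$ to the boundary point $p$ produces a genuine integral current of the form $\s+\partial\Omega'$ enclosing $\s$, that its mass is controlled by $\frac{(1+\epsilon)r}{2}$ times the length of the slice in $\Phi$-coordinates (the curvature error terms being absorbed by shrinking $\rho$), and that $u^4$ is truly two-sidedly bounded near $p$ so that $g$- and $g'$-densities are comparable. All of this is exactly the content of Lemma 26 of \cite{jeff}; once it is in place, the only new input is the observation, supplied by Lemma \ref{lemma: partial_mass_slice_rate_of_change}, that the partial slice reduces to the ordinary slice for large $\alpha$, after which Case 2 follows.
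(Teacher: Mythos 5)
Your main route---use the containment $C_\alpha \supset \ts\cap B(p,\rho)$ asserted in the proof of Lemma \ref{lemma: partial_mass_slice_rate_of_change} for sufficiently large $\alpha$ to reduce the partial slice to the ordinary slice of $\ts$ by $f$, and then derive a density lower bound as in Lemma 26 of \cite{jeff}---is essentially the paper's own argument. The paper's proof likewise takes $\alpha$ large, cites Lemma \ref{lemma: partial_mass_slice_rate_of_change} and $\partial\ts=0$, obtains $m'(r)\geq|\gamma_r|_g$, and then defers to Lemma 26 of \cite{jeff} together with the isoperimetric inequality of the Appendix. Your cone-competitor derivation of the differential inequality is the same argument phrased as a direct area comparison; no new issues there, and your observation that the hypothesis $m(r,\alpha)>0$ is preserved under enlarging $\alpha$ is correct and worth stating explicitly.

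The parenthetical alternative route, however, contains a genuine gap. You cannot invoke $C^{1,1}$ regularity of $\ts$ here. That regularity (Theorem 5 of the paper, following \cite{huiskenilmanen,bray1999proof}) is proved under the assumption that the metric extends \emph{smoothly} to the boundary. The metric in question is the maximizer $g'=u^4g$, whose boundary datum $f$ lies merely in $L^4(\Sigma)$; as the paper notes, $g'$ is a smooth metric only on $M\setminus\Sigma$. Consequently $\ts$ (the outermost minimal area enclosure with respect to $g'$) is smooth away from $\Sigma$ but is \emph{not} known to be an embedded $C^{1,1}$ surface near $p\in\ts\cap\Sigma$, and you cannot conclude $\Theta(\ts,p)=1$ by reading it off a $C^{1,1}$ graph over $T_p\ts$. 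This is precisely why the paper builds the whole partial-density and partial-slicing machinery in the first place: were $\ts$ known to be $C^{1,1}$ up to $\Sigma$, Lemmas \ref{lemma: zeropardensity_mgeq0} and \ref{lemma: zeropardensity_meq0} would be immediate and the apparatus would be superfluous. Delete the alternative and keep the slicing argument.
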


\begin{proof}
    Define for $0 < r \leq \rho$ and $\alpha > 0$
    \begin{equation} \nonumber
        m(r, \alpha) = |\ts \llcorner (B(p,r) \cap C_{\alpha})|_g  
    \end{equation}
    By the statement of the Lemma this function is positive for all $r > 0$ and for some $\alpha > 0$. Moreover, this function is monotone by definition. Therefore, for almost all $r \in [0,\rho], \; m'(r)$ exists. Define
    \begin{equation} \nonumber
        \gamma_r := \partial (\ts \llcorner (B(p,r) \cap C_{\alpha}))
    \end{equation}
    Now, we take $\alpha$ to be sufficiently large, use Lemma 5 and take into account that $\partial \ts = 0$. Moreover,
    \begin{equation} \nonumber
        m'(r) \geq |\gamma_r|_g
    \end{equation}
    by the isoperimetric inequality for integrable n-currents on open sets, given in the \textbf{Appendix} of this paper. 
    The rest of the proof fis same as that of Lemma 26 in \cite{jeff}.
\end{proof}

Therefore, we are left with only the case where the partial mass function $m(r,\alpha) = |\ts \llcorner (B(p,r) \cap C_{\alpha})|_g = 0$ for all $r \in (0,\rho]$ and $\alpha > 0$. This raises the question - \textit{can there exist an outermost minimal area enclosure $\tilde{\Sigma}$ of $\Sigma$ with respect to some $g'$, such that $m(r,\alpha) = |\tilde{\Sigma} \llcorner (B(p,r) \cap C_{\alpha})|_g = 0$ for all $r \in (0,\rho]$ and $\alpha > 0$}? We prove that this indeed, is not possible.

\begin{lemma} \label{lemma: zeropardensity_meq0} (Case 1)
    Let $\ts$ be the outermost minimal area enclosure of $\Sigma$ with respect to $g'$ and $p \in \Sigma \cap \ts$. Then $m(r,\alpha) = |\tilde{\Sigma} \llcorner (B(p,r) \cap C_{\alpha})|_g$ is not trivially zero. 
\end{lemma}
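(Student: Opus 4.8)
The plan is to argue by contradiction. Suppose $m(r,\alpha)=0$ for every $r\in(0,\rho]$ and every $\alpha>0$; I will show this forces $\ts$ to have vanishing $g$-area in a neighbourhood of $p$, which is impossible since $p\in\ts$. Throughout I would work in the model half-ball $\overline{B}^{+}$ via the diffeomorphism $\Phi$, so that $p$ corresponds to the origin, $\Sigma$ to $\{z=0\}$, and $C_{\alpha}$ to $\{\sqrt{x^2+y^2}\le\alpha z\}$, and use the standard identification of the mass of the restricted current $\ts\llcorner E$ with $\hm^2_g(\ts\cap E)$, valid because $\ts$ is an embedded surface of multiplicity one; in particular $E\mapsto|\ts\llcorner E|_g$ is a Radon measure.

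First I would exhaust the punctured half-ball by the cones $C_{\alpha}$. Since $\alpha\mapsto C_{\alpha}$ is increasing and $\bigcup_{\alpha>0}C_{\alpha}=\big(\{z>0\}\cap\overline{B}^{+}\big)\cup\{p\}$, we get $\bigcup_{\alpha>0}\big(B(p,r)\cap C_{\alpha}\big)=\big(B(p,r)\cap\{z>0\}\big)\cup\{p\}$, and continuity from below of the Radon measure $|\ts\llcorner\cdot|_g$ yields
\[
\lim_{\alpha\to\infty}m(r,\alpha)=\big|\ts\llcorner\big(B(p,r)\cap\{z>0\}\big)\big|_g .
\]
Under the contradiction hypothesis the left side is $0$. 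On the other hand, by the fact (c.f.\ Proposition~36 of \cite{jeff}) that $\ts\cap\Sigma$ has zero $g$-area, we have $\big|\ts\llcorner\big(B(p,r)\cap\{z=0\}\big)\big|_g=0$, so adding the two pieces gives $|\ts\llcorner B(p,r)|_g=0$ for all $r\in(0,\rho]$.

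The crucial step is then to contradict this by showing $|\ts\llcorner B(p,r)|_g>0$, which I would do with a coarea argument. Recall $\ts$ is an embedded $C^{1,1}$ surface (\cite{huiskenilmanen}) and $p\in\ts$ is an interior point of it (since $\partial\ts=0$), so near $p$ the surface $\ts$ is a $C^{1,1}$ graph over its tangent plane $T_p\ts$. Writing $d(q)=\mathrm{dist}_g(q,p)$ and applying the coarea formula on $\ts$,
\[
\int_{\ts\cap B(p,r)}|\nabla^{\ts}d|_g\,d\hm^2_g=\int_0^r\hm^1_g\big(\ts\cap\{d=t\}\big)\,dt .
\]
Since $\ts$ passes through $p$ as an embedded $C^{1,1}$ graph, for almost every small $t>0$ the level set $\ts\cap\{d=t\}$ is a non-trivial $1$-rectifiable curve with $\hm^1_g\big(\ts\cap\{d=t\}\big)\ge ct$ for some $c>0$ (to leading order a circle of radius $t$ in $T_p\ts$), and $|\ts\cap\Sigma|_g=0$ prevents $\ts$ from collapsing onto $\{z=0\}$ near $p$, so these slices genuinely carry positive length. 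Hence the right-hand side is strictly positive, and since $|\nabla^{\ts}d|_g\le|\nabla d|_g=1$ the left-hand side is at most $|\ts\llcorner B(p,r)|_g$; therefore $|\ts\llcorner B(p,r)|_g>0$, contradicting the previous paragraph. Consequently $m(r,\alpha)>0$ for some $r\in(0,\rho]$ and some $\alpha>0$, i.e.\ $m(r,\alpha)$ is not trivially zero.

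The main obstacle I anticipate is precisely this last step: one must be sure that $\ts$, even if tangent to $\Sigma$ at $p$ and even though $\ts\cap\Sigma$ has zero $g$-area, still has strictly positive $\hm^2_g$-area in every ball $B(p,r)$, i.e.\ that $\ts$ does not degenerate at $p$. This is what the $C^{1,1}$ embeddedness of $\ts$ together with $|\ts\cap\Sigma|_g=0$ rule out, but turning it into the quantitative lower bound $\hm^1_g(\ts\cap\{d=t\})\ge ct$ requires the local graphical description of $\ts$ and a short estimate; the cone exhaustion and the additivity of $|\ts\llcorner\cdot|_g$ over $\{z>0\}$ versus $\{z=0\}$ are then routine.
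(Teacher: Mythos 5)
Your proof is correct and reaches the same conclusion as the paper's, but it is cleaner and more rigorous in two places. The paper also passes through the contradiction hypothesis $|\ts \llcorner (B(p,\rho) \cap C_\alpha)|_g \equiv 0$, then uses the Jacobian of $\Phi$ and the area formula to transfer this to the Euclidean measure, and argues that $\Phi(\ts \cap B(p,\rho))$ must lie in $\{z=0\}$ ``since otherwise there would exist some $\alpha$ with $C_\alpha$ intersecting $\Phi(\ts)$''; this last step is only sketched (mere intersection of a set with a cone does not by itself imply positive area inside the cone without invoking the $C^1$ regularity of $\ts$), and the stated set-theoretic \emph{equality} $\Phi(\ts \cap B(p,\rho)) = B_\delta(0,1)\cap\{z=0\}$ is not what the argument justifies (only the inclusion). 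Your version sidesteps both issues: the cone exhaustion $\bigcup_\alpha C_\alpha = (\{z>0\}\cap\overline{B}^+)\cup\{p\}$ together with continuity from below of the Radon measure $E \mapsto |\ts\llcorner E|_g$ cleanly yields $|\ts\llcorner(B(p,r)\cap\{z>0\})|_g = 0$, and combined with $|\ts\cap\Sigma|_g=0$ gives $|\ts\llcorner B(p,r)|_g=0$; you then make explicit the density input that the paper uses only implicitly, namely that the $C^{1,1}$ embeddedness of $\ts$ through $p$ forces $|\ts\llcorner B(p,r)|_g>0$, which you establish by coarea with the distance function $d(\cdot) = \mathrm{dist}_g(\cdot,p)$ and the elementary slice-length lower bound $\hm^1_g(\ts\cap\{d=t\})\gtrsim t$ coming from the local graph description of $\ts$ over $T_p\ts$. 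One minor remark: the sentence claiming that $|\ts\cap\Sigma|_g=0$ is what ``prevents $\ts$ from collapsing onto $\{z=0\}$'' is not really needed for the positivity of the slice lengths --- that positivity comes from the $C^{1,1}$ regularity of $\ts$ alone, independently of the relation between $\ts$ and $\Sigma$; the role of $|\ts\cap\Sigma|_g=0$ in your argument is exclusively in the earlier step of disposing of the $\{z=0\}$ part of $B(p,r)$.
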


Our idea of proving this is to use the coarea formulaand show that there exists some finite positive number $\sigma > 0$, such that $\ts \cap B(p,\sigma) = \Sigma \cap B(p,\sigma)$, which contradicts the fact that $|\ts \cap \Sigma|_g = 0$. 

\begin{proof}
    Assume the contrary, let

        \begin{equation} \nonumber
            |\ts \llcorner (B(p,r) \cap C_{\alpha})|_g \equiv 0
        \end{equation}

        for all $r \in (0,\rho]$ and $\alpha > 0$. \\ 

        In particular, $|\ts \llcorner (B(p,\rho) \cap C_{\alpha})|_g = 0$ for all $\alpha > 0$.\\

        Let us assume that 
        \begin{equation} \label{eqn: massin_euclideanmetric}
            |\Phi(\ts \cap (B(p,\rho) \cap C_{\alpha}))|_{\delta} = 0, \; \forall \: \alpha > 0
        \end{equation}
         
         Then,

        \begin{equation} \nonumber 
            \Phi(\ts \cap B(p,\rho)) = B_{\delta}(0,1) \cap \{ z=0 \} = \Phi(\Sigma \cap B(p,\rho))
        \end{equation}

        The first of these equations follow since otherwise there will exist some $\alpha > 0$ such that $C_{\alpha}$ will intersect $\Phi(\ts \cap B(p,\rho))$. The second follows by the definition of the diffeomorphism $\Phi$ constructed at the beginning of the paper. \\

        Bu, this implies that $\Phi^{-1}(\Phi(\ts \cap B_{\delta}(p,\rho))) = \Phi^{-1}(\Phi(\Sigma \cap B_{\delta}(p, \rho)))$, therefore leading to the contradiction of the fact that $|\ts \cap \Sigma|_g = 0$. Hence, we only need to prove equation \ref{eqn: massin_euclideanmetric}. \\ 

        Note that the diffeomorphism $\Phi: \overline{B}(p,\rho) \rightarrow \{ z \geq 0 \} \cap \overline{B}_{\delta}(0,1)$ is Lipschitz and induces the map $d\Phi_x: T_xM \rightarrow T_{\Phi(x)}\mathbb{R}^3 \simeq \mathbb{R}^3$. Then, denoting $J_{\Phi}(y) = \sqrt{det (d\Phi_y)^{*}(d\Phi_y)}$ and using the coarea formula

        \begin{gather*} 
            \int\limits_{\ts \cap (B(p,\rho) \cap C_{\alpha})} J_{\Phi} d\hm^{2}_g = \int\limits_{\Phi(\ts \cap (B(p,\rho) \cap C_{\alpha}))} \hm^{0}((\ts \cap (B(p,\rho) \cap C_{\alpha})) \cap \Phi^{-1}(y)) dA(y) \\ 
            = \int\limits_{\Phi(\ts \cap (B(p,\rho) \cap C_{\alpha}))} dA(y) \\ 
            = |\Phi(\ts \cap (B(p,\rho) \cap C_{\alpha}))|_{\delta}
        \end{gather*}

        Moreover 

        \begin{gather*}
            0 \leq \int\limits_{\ts \cap (B(p,\rho) \cap C_{\alpha})} J_{\Phi} d\hm^{2}_g \leq \max\limits_{x \in \ts \cap (B(p,\rho) \cap C_{\alpha})} ( |J_{\Phi}|) \int\limits_{\ts \cap (B(p,\rho) \cap C_{\alpha})} d\hm^{2}_g = 0\\ 
        \end{gather*}

       However, this implies that $|\Phi(\ts \cap (B(p,\rho) \cap C_{\alpha}))|_{\delta} = 0$ as well.
        
\end{proof}

Lemmas \ref{lemma: zeropardensity_meq0} and \ref{lemma: zeropardensity_mgeq0} rule out the possibility for an outermost minimal area enclosure to have zero lower partial density.

\section{Proof of Theorem \ref{theorem: main_theorem}} \label{section: main_theorem}

\begin{proof} 

    The existence of metric $g'$ such that $|\s - \ts_{g'}|_{g'} < \delta$ follows from the proof of Theorem \ref{theorem: maintheorem_jau} in \cite{jeff}. We present here our proof of the claim that $\ts$ is disjoint from $\s$ under the assumption that maximizers for $\alpha(A)$ have bounded boundary data.\\
    
    Using the proof of Theorem 29 from \cite{jeff} and Proposition \ref{prop:Prop_1}, there exists a path $g(t)$ in $\mathcal{H}(g)$ such that the area of all minimal area enclosures with positive lower partial density increases at a uniformly positive rate. During variation, two different cases can happen, and for each case we obtain contradictions as follows:

    \begin{enumerate}
    
        \item \textbf{Case 1: The area of some minimal area enclosures of $\Sigma$ with zero lower partial density does not increase} \\ 
        
        In this case, with respect to the new metric obtained after variation along the normalised path $\overline{g_t}$, the outermost minimal area enclosure has zero lower partial density (since the areas of all minimal area enclosures of $\s$ with positive lower partial density increased, all new minimal area enclosures are those with zero lower partial density), contradicting Lemma \ref{lemma: zeropardensity_meq0}.
        
        \item \textbf{Case 2: The area of all minimal area enclosures of $\Sigma$ with zero lower partial density increases with variation of $g'$ in path $\overline{g_t}$ } \\ 
        
        In this case, the areas of all minimal area enclosures increases uniformly and strictly in the normalised path $\overline{g_t}$ for sufficiently small values of $t>0$ (c.f. Theorem 29, Proposition 36 of \cite{jeff}). This will lead to the contradiction that $g'$ is a maximizer for $\alpha(A)$.
        
    \end{enumerate}
\end{proof}

\section{Discussion} \label{section: discussion}

In this paper, we presented the proof of the conformal conjecture under the assumption of boundedness of the harmonic function $u$ in the definition of the maximizer $g' = u^4g$ (theorem \ref{theorem: main_theorem}). This result was used in \cite{Bray:2009voj} to prove the following theorem regarding \textit{zero area singularities}.

\begin{theorem}

    Suppose $g$ is an asymptotically flat metric on $M \backslash \partial M$ of nonnegative scalar curvature such that all components of the boundary $\s = \partial M$ are ZAS. Assume that there exists a global harmonic resolution $(\overline{g}, \overline{\varphi})$ of $\s$. Assume that the conformal conjecture holds. Then

    \begin{equation*}
        m_{\text{ADM}} \geq m_{\text{ZAS}}
    \end{equation*}

    The equality holds if and only if $(M,g))$ is a Shwarzschild ZAS of mass $m < 0$.
    
\end{theorem}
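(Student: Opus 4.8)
The plan is to reduce the Riemannian ZAS inequality to the Riemannian Penrose inequality by passing to a harmonic resolution, and to use the conformal conjecture exactly to supply resolutions in which the boundary $\s$ is as close as we like to being outer-minimizing, so that the Penrose bound becomes sharp in the limit. So, first I would fix a global harmonic resolution $(\overline{g},\overline{\varphi})$ of $\s$: $\overline{g}$ is a smooth asymptotically flat metric on $M$ extending smoothly to $\s$, $\overline{\varphi}$ is $\overline{g}$-harmonic, positive on $\mathrm{int}\,M$, vanishes on $\s$ and tends to $1$ at infinity, and $g=\overline{\varphi}^{4}\overline{g}$ on $M\setminus\s$. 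Two observations are then immediate: because the conformal factor is harmonic the $\Delta\overline{\varphi}$-term in the scalar curvature transformation law drops, so $R_{\overline{g}}=\overline{\varphi}^{4}R_{g}\geq0$; and, writing $\overline{\varphi}=1-\tfrac{1}{2}\mathfrak{r}\,|x|^{-1}+O(|x|^{-2})$ with $\mathfrak{r}=\tfrac{1}{2\pi}\int_{\s}\partial_{\overline{\nu}}\overline{\varphi}\,dA_{\overline{g}}\geq0$ ($\overline{\nu}$ the inward normal, $\mathfrak{r}$ the regularity obtained from Green's identity), the behaviour of the ADM mass under a harmonic conformal change gives $m_{\text{ADM}}(g)=m_{\text{ADM}}(\overline{g})-\mathfrak{r}$. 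The same computation applies in any $v^{4}\overline{g}\in\overline{\mathcal{H}(\overline{g})}$, showing that $(v^{4}\overline{g},\overline{\varphi}/v)$ is again a (generalized) harmonic resolution of the same ZAS metric $g$; since $m_{\text{ZAS}}(\s)$ is defined in \cite{Bray:2009voj} as an extremum, over harmonic resolutions of $\s$, of a Penrose-type functional comparing $m_{\text{ADM}}$ of the resolution metric with the minimal enclosing area of $\s$ in that metric, the theorem reduces to an inequality quantified over the single harmonic conformal class $\overline{\mathcal{H}(\overline{g})}$ (for several boundary components one argues component by component and sums, using Bray's multi-horizon Penrose inequality).

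Next, given $\delta>0$, I would apply the conformal conjecture (Conjecture~\ref{conjecture:conf_conj}, available by Theorem~\ref{theorem: main_theorem}) to $(M,\overline{g})$ to get $\overline{g}_{\delta}=v_{\delta}^{4}\overline{g}\in\overline{\mathcal{H}(\overline{g})}$ whose outermost minimal area enclosure $\ts_{\overline{g}_{\delta}}$ is disjoint from $\s$ and satisfies $|\s|_{\overline{g}_{\delta}}-\delta<|\ts_{\overline{g}_{\delta}}|_{\overline{g}_{\delta}}\leq|\s|_{\overline{g}_{\delta}}$. Since $v_{\delta}$ is $\overline{g}$-harmonic on $\mathrm{int}\,M$, $\overline{g}_{\delta}$ is smooth there with $R_{\overline{g}_{\delta}}=v_{\delta}^{-4}R_{\overline{g}}\geq0$; and because $\ts_{\overline{g}_{\delta}}$ avoids $\partial M=\s$, the exterior region $\Omega_{\delta}\subset\mathrm{int}\,M$ bounded by $\ts_{\overline{g}_{\delta}}$ is an asymptotically flat $3$-manifold of nonnegative scalar curvature with compact, outer-minimizing, $C^{1,1}$ minimal boundary. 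Hence the Riemannian Penrose inequality applied to $(\Omega_{\delta},\overline{g}_{\delta})$ gives $m_{\text{ADM}}(\overline{g}_{\delta})\geq\sqrt{|\ts_{\overline{g}_{\delta}}|_{\overline{g}_{\delta}}/16\pi}\geq\sqrt{(|\s|_{\overline{g}_{\delta}}-\delta)/16\pi}$.

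Now I would combine these. The resolution $(\overline{g}_{\delta},\overline{\varphi}/v_{\delta})$ has regularity $\mathfrak{r}_{\delta}=\tfrac{1}{2\pi}\int_{\s}\partial_{\overline{\nu}_{\delta}}(\overline{\varphi}/v_{\delta})\,dA_{\overline{g}_{\delta}}$, and the mass identity of the first step gives $m_{\text{ADM}}(g)=m_{\text{ADM}}(\overline{g}_{\delta})-\mathfrak{r}_{\delta}$; substituting the Penrose bound yields $m_{\text{ADM}}(g)\geq\sqrt{(|\s|_{\overline{g}_{\delta}}-\delta)/16\pi}-\mathfrak{r}_{\delta}$, whose right-hand side is the value at the resolution $(\overline{g}_{\delta},\overline{\varphi}/v_{\delta})$ of the defining functional for $m_{\text{ZAS}}$, up to an $O(\sqrt{\delta})$ error from replacing $|\ts_{\overline{g}_{\delta}}|_{\overline{g}_{\delta}}$ by $|\s|_{\overline{g}_{\delta}}$. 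Letting $\delta\to0^{+}$ and using the extremal characterization of $m_{\text{ZAS}}$ then gives $m_{\text{ADM}}(g)\geq m_{\text{ZAS}}$.

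For rigidity, if $m_{\text{ADM}}(g)=m_{\text{ZAS}}$ then the Penrose bound above is saturated in the limit, so by the rigidity case of the Riemannian Penrose inequality the exteriors $(\Omega_{\delta},\overline{g}_{\delta})$ must degenerate onto a spatial Schwarzschild metric; pushing this through the resolution, together with the equality case of the conformal comparison, forces $(M,g)$ to be a negative-mass Schwarzschild ZAS, and conversely negative-mass Schwarzschild saturates the inequality via its flat resolution. I expect the main obstacle to be precisely this limit: the conformal factors $v_{\delta}$ concentrate near $\s$ as $\delta\to0$, so $\overline{g}_{\delta}$ does not converge in any naive sense, and controlling it — as well as checking that $(\overline{g}_{\delta},\overline{\varphi}/v_{\delta})$ is an admissible generalized resolution with the correct boundary behaviour of $\overline{\varphi}/v_{\delta}$ across $\s$ — requires uniform estimates of exactly the kind the boundedness hypothesis of Theorem~\ref{theorem: main_theorem} and the weak-convergence theory of $\overline{\mathcal{H}(\overline{g})}$ from Section~\ref{section: formalisms} are designed to provide. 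The detailed bookkeeping and the full rigidity argument are carried out in \cite{Bray:2009voj}.
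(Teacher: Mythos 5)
The paper itself does not prove this theorem. It is quoted in Section~\ref{section: discussion} purely to motivate why the conformal conjecture matters, and the author explicitly defers: the quantities $m_{\text{ZAS}}$ and \emph{global harmonic resolution} are not even defined in this paper, and the reader is referred to \cite{Bray:2009voj} for the definitions and the argument. So there is no proof here against which your attempt can be checked; the statement is being cited, not established.

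That said, your sketch is a reasonable reconstruction of the Bray--Jauregui strategy: pass to a harmonic resolution $\overline{g}$ so that nonnegative scalar curvature is preserved (your computation $R_{g}=\overline{\varphi}^{-4}R_{\overline{g}}$ and the ADM-mass shift $m_{\text{ADM}}(g)=m_{\text{ADM}}(\overline{g})-\mathfrak{r}$ are both correct for a harmonic conformal factor), then use the conformal conjecture to produce resolved metrics $\overline{g}_{\delta}$ in which the outermost minimal area enclosure is disjoint from $\s$ and nearly has the boundary's area, apply the Riemannian Penrose inequality to the exterior region, and take $\delta\to 0^{+}$. A few caveats you would need to reconcile against \cite{Bray:2009voj} rather than assert: the precise definition of $m_{\text{ZAS}}$ is intrinsic (a capacity-type boundary integral of $\nu(\overline{\varphi})$ raised to a power), not literally ``an extremum over resolutions of a Penrose-type functional,'' so the step from your $\delta$-approximate Penrose bound to $m_{\text{ADM}}\geq m_{\text{ZAS}}$ needs the actual formula for $m_{\text{ZAS}}$ and the optimization carried out in their paper; for multiple boundary components the ZAS mass is not simply a sum, so the parenthetical ``argue component by component and sums'' is too quick; and the rigidity discussion as written is a statement of intent rather than an argument. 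None of these are fatal — they are exactly the parts you yourself flag as ``detailed bookkeeping'' in \cite{Bray:2009voj} — but since the present paper supplies none of it, the honest assessment is that your sketch captures the cited proof's shape while the substance lives in the reference.
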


We have not defined $m_{\text{ZAS}}$ and \textit{global harmonic resolution} in this paper, the reader is advised to refer to \cite{Bray:2009voj} for further details on this. The complete resolution of the conformal conjecture is still an open problem in differential geometry. We state a few possible ways to approach the complete resolution. One possible direction is to prove that Theorem \ref{theorem: main_theorem} without the requirement that the maximizer is bounded (originally conjectured in \cite{Bray:2009voj}).

\begin{conjecture} 
    Conjecture \ref{conjecture:conf_conj} is true without any additional conditions.
\end{conjecture}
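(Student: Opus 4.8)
A route toward the full conjecture is to remove the hypothesis $u\le C$ from Theorem \ref{theorem: main_theorem}, and the first task is to isolate where boundedness of the maximizer $g'=u^{4}g$ is actually used. It enters chiefly in three places: through Proposition 33 of \cite{jeff} to force $f\ge\varepsilon>0$, hence $1/f^{3}\in L^{\infty}(\s)$, which is what makes $w_\sigma=h_\sigma/f^{3}$ an admissible ($L^{4}$) perturbation and keeps $f_{t}=f+t(h_\sigma/f^{3}-k_\sigma f)\ge 0$ for small $t$; in the passage from Lemma \ref{lemma: uniform_ineq} to \eqref{eqn: prop1_ineq} in the proof of Proposition \ref{prop:Prop_1}, where the domination $v_\sigma\le u^{3}w_\sigma$ and the \emph{uniform} convergence hypothesis of the Modified Uniform Fatou Lemma (Theorem \ref{theorem: modified_fatou}) are invoked; and, more diffusely, in the variational step of Section \ref{section: main_theorem}, which borrows from the machinery of Theorem 29 of \cite{jeff} the differentiability of $t\mapsto|S|_{g_t}$ and $t\mapsto|\s|_{g_t}$ with the stated sign. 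By contrast, Part 2 of Section \ref{section: proof} --- the zero lower partial density Lemmas \ref{lemma: zeropardensity_mgeq0} and \ref{lemma: zeropardensity_meq0} and the underlying coarea and slicing machinery --- uses only the smooth $g$-geometry near $\ts\cap\s$ and is already unconditional, so it can be left intact.

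For the first difficulty the plan is to trade the global lower bound on $f$ for a purely local one near the base point $p\in\ts\cap\s$. There is an automatic a priori bound that survives unboundedness of $u$: for every minimal area enclosure $S$ one has $\int_{S\setminus\s}u^{4}\,d\hm^{2}_{g}\le\int_{S\cap\s}f^{4}\,d\hm^{2}_{g}+\int_{S\setminus\s}u^{4}\,d\hm^{2}_{g}=|S|_{g'}=\alpha(A)\le A$, so $u^{4}$ (and, by H\"older, $u^{3}$) is integrable over the relevant surfaces with a bound depending only on $A$ and $\hm^{2}_{g}(S)$. The only genuine obstruction is then the integrability of $h_\sigma/f^{3}$ on $B(p,\sigma)\cap\s$. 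I would choose $p$ to be a point of $\ts\cap\s$ at which $f$ has a positive $g$-approximate lower limit --- such points fill $\ts\cap\s$ up to the degenerate case in which $f$ vanishes $g$-almost everywhere near $\ts\cap\s$ in $\s$, which is the ZAS situation and can be treated separately --- and then, for small $\sigma$, replace $f$ by $\max(f,\varepsilon_{p})$ on the small-measure subset of $B(p,\sigma)\cap\s$ where $f<\varepsilon_{p}$. This perturbs $g'$ by an arbitrarily small amount in $\gh$, restores $h_\sigma/f^{3}\in L^{4}(\s)$ and $f_{t}\ge 0$, and leaves the estimates of Section \ref{section: proof} and the variational argument of Section \ref{section: main_theorem} in force.

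The main obstacle is the second difficulty. When $u$ is bounded, $u^{3}w_\sigma$ dominates $v_\sigma$, one has $v_\sigma\to K(\cdot,p)$ uniformly on $\mathrm{int}(M)$ (Corollary \ref{corollary: corr1}), and Theorem \ref{theorem: modified_fatou} closes the estimate; when $u$ is unbounded, neither the pointwise domination in the form used nor the uniform convergence of the weights $u^{3}w_\sigma$ is available, and the Modified Uniform Fatou Lemma no longer applies. My plan is to return to the full uniform Fatou lemma of \cite{feinberg2015uniform} (Theorem \ref{theorem: feinberg}) with the \emph{varying} family of finite measures $\mu_{S}=u^{3}\,\hm^{2}_{g}\llcorner(S\setminus\s)$ --- finite, with total mass bounded uniformly in $S$ by the previous paragraph --- and to verify its two hypotheses: the ``no downward escape'' condition, which should reduce to upgrading Corollary \ref{corollary: corr1} to convergence $w_\sigma\to K(\cdot,p)/u^{3}$ in $\mu_{S}$-measure uniformly over the family $\{S_i\}$; and the tightness-at-$+\infty$ condition, which asks that the $\mu_{S}$-mass of the region where $u^{3}w_\sigma$ is very large be uniformly small over all minimal area enclosures accumulating at $p$. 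This last requirement is, I expect, the crux of the whole problem: it is a uniform concentration-compactness statement for the unbounded harmonic function $u$ along the (possibly infinite) family of minimal area enclosures, and it appears to require a new quantitative estimate --- for instance a bound on $\int_{(S\setminus\s)\cap B(p,r)}u^{4}\,d\hm^{2}_{g}$, in terms of $A$ and the boundary geometry of $\s$ alone, that is uniform in $S$ and decays appropriately as $r\to 0^{+}$ --- which is not presently available. Once such a boundary estimate is established, the variational contradiction of Section \ref{section: main_theorem} closes exactly as in the proof of Theorem \ref{theorem: main_theorem}, and the conformal conjecture, hence the ZAS mass inequality of \cite{Bray:2009voj}, follows with no extra hypotheses.
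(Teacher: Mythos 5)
This statement is not proved in the paper --- it is listed in Section \ref{section: discussion} as an open conjecture, and the paper's own contribution (Theorem \ref{theorem: main_theorem}) retains the boundedness hypothesis $u\le C$. Your proposal is, commendably, honest about this: it is an analysis of where boundedness enters and a roadmap, not a proof, and it closes by conceding that the key uniform decay estimate on $\int_{(S\setminus\s)\cap B(p,r)}u^{4}\,d\hm^{2}_{g}$ ``is not presently available.'' So as a proof of the stated conjecture it has a genuine gap, and you have correctly named it.

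Beyond that acknowledged gap, one of your intermediate fixes also does not go through as written. To handle the possible failure of $f\ge\varepsilon$, you propose to replace $f$ by $\max(f,\varepsilon_p)$ on a small subset of $B(p,\sigma)\cap\s$ and argue that this ``perturbs $g'$ by an arbitrarily small amount in $\gh$.'' But the entire variational argument of Section \ref{section: main_theorem} derives a contradiction from the assumption that $g'$ is a \emph{maximizer} for $\alpha(A)$; once $f$ is modified, the new metric need not be a maximizer (indeed generically it will not be), so the path $g_t$ built from the modified data no longer contradicts anything. One would instead have to show that the perturbation can be chosen so that the maximizing property is preserved to the order needed, or re-derive a quantitative near-maximizer version of Proposition \ref{prop:Prop_1}, neither of which is supplied. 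Your identification of the three loci where $u\le C$ is used, and the observation that Part 2 (Lemmas \ref{lemma: zeropardensity_mgeq0} and \ref{lemma: zeropardensity_meq0}) is already unconditional, are both accurate and would be the right starting point for an attack; but as it stands the proposal does not establish the conjecture.
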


Speculatively, a more promising direction is to prove the boundedness of the maximizer.

\begin{conjecture}
    The maximizer of theorem 3 is bounded below: $u \leq C$ for some constant $C > 0$.
\end{conjecture}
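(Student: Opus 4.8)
The plan is to reduce the desired $L^\infty$ bound on $u$ to an a priori bound on the boundary density $f \in L^4(\s)$ determined by $u$, and to extract the latter from the first-order optimality of the maximizer. First I would check that the area constraint is active at the maximizer, i.e.\ $|\s|_{g'} = A$: if $|\s|_{g'} < A$, replacing $f$ by $sf$ for $s > 1$ close to $1$ keeps the metric in $\gh$, raises $|\s|_{g'}$ toward $A$, and strictly increases $\min(\s, g')$ since every enclosing surface becomes heavier, contradicting maximality. Next, for an admissible first-order variation $f_t = f + t\psi$ with $\psi \in L^4(\s)$, $\psi \geq -Cf$ (so $f_t \geq 0$) and $\int_{\s} f^3 \psi \, dA_g \leq 0$ (so $|\s|_{g_t} \leq A$ to first order), the associated harmonic variation is $\dot u = w$, the harmonic function with boundary data $\psi$ on $\s$ tending to $0$ at infinity, and the first-variation identity behind \eqref{eqn: prop1_ineq} gives, for the outermost minimal area enclosure $\ts$,
\begin{equation} \nonumber
\frac{1}{4}\frac{d}{dt}\Big|_{t=0}|\ts|_{g_t} = \int_{\ts \setminus \s} u^3 w \, d\hm^2_g = \int_{\s} G(y)\,\psi(y)\, dA_g(y), \qquad G(y) := \int_{\ts \setminus \s} u(x)^3 K(x,y)\, d\hm^2_g(x),
\end{equation}
the boundary term dropping because $|\ts \cap \s|_g = 0$ and the second equality being Fubini. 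Formally — the rigorous version requiring care with the non-differentiability of $\min(\s,\cdot)$, which is an infimum over possibly infinitely many enclosures — maximality forces this variation to be $\leq 0$ along every admissible $\psi$; running $\pm\psi$ through the constraint-preserving directions $\{\,\psi : \int_{\s} f^3\psi\,dA_g = 0\,\}$ then yields the Euler--Lagrange identity $G(y) = \lambda f(y)^3$ for a.e.\ $y \in \s$, with a Lagrange multiplier $\lambda > 0$ (positive because the constraint is active and $G \geq 0$, $G \not\equiv 0$).

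With the Euler--Lagrange relation $f(y)^3 = \lambda^{-1}\int_{\ts \setminus \s} u(x)^3 K(x,y)\, d\hm^2_g(x)$ in hand, the endgame is a bootstrap. If $\ts \setminus \s$ stays at positive $g$-distance from $\s$ then $K(x,y)$ is uniformly bounded for $x \in \ts \setminus \s$, $y \in \s$, and $u$ is bounded on the compact set $\ts \setminus \s$, so $f \in L^\infty(\s)$; the maximum principle for the harmonic $u$ (boundary trace $f$, limit $1$ at infinity) then gives $u \leq C$. The main obstacle is the contact case $\ts \cap \s \neq \emptyset$ — precisely the configuration the conformal conjecture is about — since there $K(x,y)$ degenerates as $x \to y$ and $u$ may itself be large along $\ts \setminus \s$ near the contact set, so the right-hand side of the Euler--Lagrange equation is only weakly controlled and the bootstrap need not close.

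I expect breaking this circularity to require a quantitative version of the positive-lower-partial-density estimates of Lemmas \ref{lemma: zeropardensity_mgeq0} and \ref{lemma: zeropardensity_meq0}: a uniform lower bound on the density of $\ts$ at a contact point $p \in \ts \cap \s$ should translate, through the Euler--Lagrange identity, into an upper bound on $f$ near $p$, the heuristic being that concentration of $f$ at $p$ inflates $u$ in the interior near $\ts$ and produces a competitor enclosure routed through $\s$ — where extra density is free to the minimal enclosing area — that is strictly cheaper than $\ts$, contradicting minimality. An alternative route with the same core difficulty is to show directly that the truncation $f \mapsto s_N\min(f,N)$, with $s_N \downarrow 1$ chosen to preserve $|\s|_{g'} = A$, cannot decrease $\min(\s,g')$ in the limit, so that a bounded maximizer can be extracted (which is all Theorem \ref{theorem: main_theorem} requires); this again reduces to uniform control of minimal area enclosures under weak convergence of the metric near the contact set, and is why the statement is recorded here only as a conjecture.
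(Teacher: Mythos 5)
This statement appears in the paper's Discussion section as Conjecture~3, explicitly flagged as an \emph{open problem}: the paper offers no proof of it, only the remark that resolving it (or the unconditional Conjecture~2) would complete the proof of the conformal conjecture. There is therefore no ``paper's own proof'' to compare against. Your write-up is an attack sketch rather than a proof, and you say so honestly; your final sentence — that the argument ``reduces to uniform control of minimal area enclosures under weak convergence of the metric near the contact set, and is why the statement is recorded here only as a conjecture'' — is exactly the right conclusion.

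A few remarks on the sketch itself, since they bear on whether it could ever be tightened. Your Lagrange-multiplier derivation silently replaces $\frac{d}{dt}\big|_{t=0}\min(\s,g_t)$ by $\frac{d}{dt}\big|_{t=0}|\ts|_{g_t}$; these agree only via a Danskin-type envelope argument, and when the minimizing set is infinite the one-sided derivative of the infimum is itself an infimum over minimizers, so the stationarity identity $G = \lambda f^3$ holds at best with $\lambda$ and $G$ depending on which minimizer the envelope selects. This is precisely the pathology that Proposition~\ref{prop:Prop_1} and the uniform Fatou machinery of Section~\ref{section: positive_partial_density} are built to control, and it is not incidental to the hard case — it \emph{is} the hard case. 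You also need the admissible cone $\{\psi \geq -Cf\}$ to be symmetric to run $\pm\psi$, which fails on $\{f=0\}$; Proposition~33 of \cite{jeff} gives $f \geq \epsilon > 0$ and rescues this, but it should be invoked. Finally, the bootstrap at a contact point $p \in \ts\cap\s$ does not merely ``need care'': $K(x,y)\sim |x-y|^{-2}$, $\hm^2$-integration of $u^3K(\cdot,y)$ over $\ts\setminus\s$ accumulating at $p$ diverges unless one has quantitative decay of the partial mass $m_{\alpha,r}(\ts,p)$, and the positive-lower-partial-density results of Lemmas~\ref{lemma: zeropardensity_mgeq0}--\ref{lemma: zeropardensity_meq0} give lower bounds, not the upper bounds the bootstrap would require. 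So the missing ingredient is not a refinement of the present lemmas but an estimate in the opposite direction, which the paper does not supply. Leaving the statement as a conjecture, as both you and the paper do, is the correct call.
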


Proof of either of these approaches would lead to the complete proof of the conformal conjecture (conjecture \ref{conjecture:conf_conj}).

\section{Appendix}

\begin{namedprop}[Proposition A.1] (Isoperimetric inequality \cite{simon}, \cite{jeff}) For $n,k \geq 1$, let T be an integral n-current in $\mathbb{R}^{n+k}$ with compact support and zero boundary. Then there exists integral (n+1)-current R with compact support such that $\partial R = T$ and 

    \begin{equation} \nonumber 
        |R|^{\frac{n}{n+1}} \leq c|T|
    \end{equation}

for some constant c depending only on n and k. In particular, if k=1, then for all bounded, (n+1) Hausdorff measurable sets $\Omega \subset \mathbb{R}^{n+1}$

    \begin{equation} \nonumber 
        \mathcal{H}^{n+1}(\Omega)^{\frac{n}{n+1}} \leq c \mathcal{H}^n(\partial \Omega)
    \end{equation}
    
\end{namedprop}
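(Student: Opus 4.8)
The plan is to obtain this as a standard consequence of the Federer--Fleming deformation theorem (cf.\ \cite{federer1969geometric}, \cite{simon}); throughout I write $|T|$ for the mass of an integral current $T$, matching the conventions used in the paper.

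First I would invoke the deformation theorem at scale $\rho>0$. Let $\mathcal{C}_\rho$ be the cubical grid of side $\rho$ on $\mathbb{R}^{n+k}$. It produces integral currents $P$ (an $n$-dimensional polyhedral chain carried by the $n$-skeleton of $\mathcal{C}_\rho$), $Q$ (an $(n+1)$-current), and an error term $E$ (an $n$-current), all with compact support inside a fixed $\rho$-neighbourhood of $\operatorname{supp}T$, such that
\[
 T = P + \partial Q + E, \qquad |P|\le c_1\bigl(|T|+\rho|\partial T|\bigr), \qquad |Q|\le c_1\rho\bigl(|T|+\rho|\partial T|\bigr), \qquad |E|\le c_1\rho|\partial T|,
\]
with $c_1=c_1(n,k)$, and with $\partial P$ also controlled by $|\partial T|$. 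These estimates are produced by iterating radial projections from the interiors of the cells of $\mathcal{C}_\rho$ onto their faces, the projection centres being selected by a Fubini averaging argument so that the push-forward keeps the displayed mass control. In our situation $\partial T=0$, so $E=0$, $P$ is a polyhedral cycle carried by the $n$-skeleton with $|P|\le c_1|T|$, and $T=P+\partial Q$.

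Next I would choose the scale. Assuming $|T|>0$ (otherwise $T=0$ and $R=0$ works), put $\rho^n=2c_1|T|$. A nonzero integral polyhedral $n$-chain carried by the $n$-skeleton of $\mathcal{C}_\rho$ has mass at least $\rho^n$, the $\mathcal{H}^n$-measure of a single $n$-face; since $|P|\le c_1|T|=\tfrac12\rho^n<\rho^n$, this forces $P=0$, hence $T=\partial Q$. Setting $R:=Q$ gives an integral $(n+1)$-current with compact support and $\partial R=T$, with
\[
 |R|\le c_1\rho|T| = c_1(2c_1|T|)^{1/n}|T| = C\,|T|^{\frac{n+1}{n}}, \qquad C=C(n,k),
\]
and raising to the power $n/(n+1)$ yields $|R|^{n/(n+1)}\le c\,|T|$. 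For the case $k=1$, take $\Omega\subset\mathbb{R}^{n+1}$ bounded and $\mathcal{H}^{n+1}$-measurable; we may assume $\mathcal{H}^n(\partial\Omega)<\infty$, so $\Omega$ has finite perimeter and $T:=\partial\Omega$ is an integral $n$-cycle with $|T|\le\mathcal{H}^n(\partial\Omega)$. Applying the above to this $T$ produces $R$ with $\partial R=\partial\Omega$ and $|R|^{n/(n+1)}\le c\,\mathcal{H}^n(\partial\Omega)$; then $R-\Omega$ is an integral $(n+1)$-current in $\mathbb{R}^{n+1}$ with compact support and zero boundary, so by the constancy theorem (cf.\ \cite{simon}) it is an integer multiple of $\mathbb{R}^{n+1}$, and compactness of support forces that multiple to vanish, i.e.\ $R=\Omega$ and $|R|=\mathcal{H}^{n+1}(\Omega)$, which is the claimed inequality.

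The only substantial ingredient is the deformation theorem of the first step; everything after it is bookkeeping together with the choice of scale, plus the constancy theorem for the $k=1$ reduction. The main obstacle, were one to insist on a self-contained argument, is reproving that deformation estimate: one would cover $\operatorname{supp}T$ by a single cube of side $\rho\sim|T|^{1/n}$, project $T$ radially from a generic interior point onto the boundary faces (the averaging over the choice of centre being exactly what yields the dimensional constant), cone the projected current back, and iterate over faces of decreasing dimension while estimating the cone masses at each stage; this recovers the same bound but is the technically delicate part.
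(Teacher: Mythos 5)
Your proof is correct and is exactly the standard Federer--Fleming argument found in the cited source (Simon, \emph{Lectures on Geometric Measure Theory}, Theorem 30.1); the paper itself states Proposition A.1 without proof, simply citing \cite{simon} and \cite{jeff}. The three ingredients you use -- the deformation theorem with $\partial T=0$ forcing $E=0$ and $\partial P=0$, the scale choice $\rho^n=2c_1|T|$ killing the polyhedral cycle $P$ because any nonzero integral chain on the $n$-skeleton has mass at least $\rho^n$, and the constancy theorem for the $k=1$ reduction identifying $R$ with $\Omega$ -- are precisely the steps in that reference, so there is nothing to compare against beyond noting you have correctly reconstructed the textbook proof.
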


\medskip

\end{document}